\theoremstyle{plain}
\newtheorem{theorem}{Theorem}[section]
\newtheorem{corollary}[theorem]{Corollary}
\newtheorem{lemma}[theorem]{Lemma}
\newtheorem{proposition}[theorem]{Proposition}
\theoremstyle{definition}
\newtheorem{definition}[theorem]{Definition}
\newtheorem{example}[theorem]{Example}
\newtheorem{remark}[theorem]{Remark}
\crefname{assumption}{Assumption}{Assumptions}
\numberwithin{equation}{section}
\def\subsubsection{\@startsection{subsubsection}{3}%
  \z@{.5\linespacing\@plus.7\linespacing}{-.5em}%
  {\normalfont\bfseries}}
\def\paragraph{\@startsection{paragraph}{4}%
	\z@{.25\baselineskip\@plus.35\baselineskip}{-\fontdimen2\font}%
	{\normalfont\itshape}}
\def\subparagraph{\@startsection{subparagraph}{5}%
	\z@{.15\baselineskip\@plus.25\baselineskip}{-\fontdimen2\font}
	{\itshape}}
\newcommand{\assign}{:=}
\newcommand{\tmop}[1]{\ensuremath{\operatorname{#1}}}
\newcommand{\wei}[1]{\langle#1\rangle}
\newcommand{\bk}[1]{\llbracket#1\rrbracket}
\newcommand{\norm}[1]{\|#1\|}
\newcommand{\normlr}[1]{\left\|#1\right\|}
\newcommand{\bklr}[1]{\left\llbracket#1\right\rrbracket}
\newcommand{\tand}{\quad\textrm{and} \quad}
\newcommand{\ctrl}{\eta}
\newcommand{\law}{\mathrm{Law}}
\newcommand{\1}{\mathds{1}}
\colorlet{darkred}{red!90!black}
\newcommand{\caa}{\mathcal A}
\newcommand{\cll}{\mathcal L}
\newcommand{\cff}{\mathcal F}
\newcommand{\cpp}{{\mathcal P}}
\def\cgg{{\mathcal G}}
\def\cmm{{\mathcal M}}
\def\cjj{{\mathcal J}}
\newcommand{\Bor}{\ensuremath{\mathrm{Bor}}} 
\newcommand{\C}{{\mathcal C}}
\newcommand{\Cb}{\ensuremath{\mathcal C_b}}
\newcommand{\CC}{{\mathscr{C}}}
\newcommand{\sym}{{\mathrm{Sym} }}
\newcommand{\X}{{\ensuremath{\bm{X}}}}
\newcommand{\Z}{\ensuremath{\bm{Z}}}
\newcommand{\XX}{\ensuremath{\mathbb{X}}}
\newcommand{\R}{\ensuremath{\mathbf R}}
\newcommand{\E}{\ensuremath{\mathbb E}}
\renewcommand{\P}{\ensuremath{\mathbb P}}
\newcommand{\A}{\ensuremath{\mathcal A}}
\newcommand{\T}{\ensuremath{\mathcal T}}
\newcommand{\Taylor}{\ensuremath{L}}
\newcommand{\TT}{\ensuremath{\mathcal{\bar T}}}
\newcommand{\LL}{\ensuremath{\mathcal{\bar L}}}
\newcommand{\D}{\ensuremath{\mathbf D}}
\newcommand{\DD}{\ensuremath{\mathbf{\bar D}}}
\newcommand{\Vone}{{\bar{V}}}
\newcommand{\abx}{{\mathcal X}}
\newcommand{\lip}{\ensuremath{{\mathrm{Lip}}}}
\DeclareMathOperator*{\esssup}{ess\,sup}
\newcommand{\EE}{{{\mathbb E}_{\bigcdot}}}
\newcommand{\W}{{\bm{\Omega}}}
\newcommand*\bigcdot{{\mathpalette\bigcdot@{.5}}}
\newcommand*\bigcdot@[2]{\mathbin{\vcenter{\hbox{\scalebox{#2}{$\m@th#1\bullet$}}}}}
\newcommand{\Deltatwo}{\ensuremath{\Delta\!\!\!\!\Delta}}
\renewcommand{\tilde}{\widetilde}
\DeclareSymbolFont{fouriersymbols}{FMS}{futm}{m}{n}
\DeclareSymbolFont{fourierlargesymbols}{FMX}{futm}{m}{n}
\DeclareMathDelimiter{\vvert}{\mathord}{fouriersymbols}{152}{fourierlargesymbols}{147}
\DeclarePairedDelimiter{\fl}{\lfloor}{\rfloor}
\begin{document}

\title{Rough stochastic differential equations}
\author{Peter K.~Friz}
\address{TU Berlin and WIAS Berlin}
\email{friz@math.tu-berlin.de}

\author{Antoine Hocquet}
\address{TU Berlin}
\email{hocquet@math.tu-berlin.de}

\author{Khoa Lê}
\address{TU Berlin. Current address: School of Mathematics, University of Leeds, U.K.}
\email{k.le@leeds.ac.uk}

\subjclass[2020]{Primary 60L20, 60H10}

\keywords{Rough stochastic analysis, rough SDEs, stochastic rough integration}


\begin{abstract}
We establish a simultaneous generalization of It\^o's theory of stochastic and Lyons' theory of rough
differential equations. The interest in such a unification comes from a variety of applications, including
pathwise stochastic filtering, - control and the conditional analysis of stochastic systems with common noise.
\end{abstract}
\maketitle

\tableofcontents
\section{Introduction} 
\label{sec.introduction}

It\^o's important theory of stochastic integration 
gives meaning and well-posedness
of multidimensional stochastic differential equations (SDEs) of the form
\begin{equation}\label{eq:SDE}
		d Y_t = b_t (Y_t)dt + \sigma_t ( Y_t) dB_t.
\end{equation}
Here $B=B(\omega)$ is a Brownian motion, the process $Y=Y(\omega)$  constitutes an important example of an It\^o process, e.g. \cite{RY99}. Crucially in this theory, coefficient fields like $\sigma_t (x) = \sigma(\omega,t,x)$ must be nonanticipating to enable the use of martingale methods.
In contrast, the purely deterministic theory of rough paths \cite{MR1654527} 
gives, under natural assumptions, well-posedness to rough differential equations (RDEs) of the form
 \begin{equation}\label{eq:RDE}
		dY_t=
		b_t (Y_t)dt + f_t(Y_t)d\X_t .
\end{equation}
Here $\X = (X,\XX)$ is a H\"older rough path, the solution $Y=Y^\X$ is an example of a {\em controlled rough path} \cite{MR2091358}, or simply {\em controlled} (w.r.t. $X$), in the sense that it looks like $X$ on small scales: $Y_t \approx Y_s + Y_s' (X_t - X_s)$, with $Y'_s := f_s (Y_s)$.
Crucially, the definition of $\int f (Y) d \X$, and then integral meaning to \eqref{eq:RDE}, requires $f(Y)$ itself to be controlled.
Many works on this subject, including \cite{FH20}, consider $f_t(\cdot) \equiv f(\cdot)$ so let us point out that non-regular time dependence requires extra considerations. 
A direct way to do so\footnote{But see \cite{bailleul2019rough} for a rough flow - and \cite{kelly2017deterministic} for a Banach RDE perspective.} 
 is to assume a controlled structure $f_t (\cdot) \approx f_s(\cdot) + f'_s (\cdot) (X_t -X_s)$.  Following \cite{MR2387018,FH20}, with focus on drift-free case $b \equiv 0$,  solutions come with a refined expansion
$$
         Y_t =Y_s + Y_s' (X_t - X_s) + Y''_s \XX_{s,t} + o(t-s), \quad Y''_s = ((Df_s) f_s+ f'_s) (Y_s),
$$
which characterizes RDE solutions. The inclusion of a drift poses no difficulties.\footnote{ ... and also appears as special case of \eqref{def.J} below.}

\medskip

{\em Brownian rough paths.} Important examples of rough paths come from the typical realization of a multidimensional Brownian motion enhanced with iterated (It\^o) integrals,
\[
	\X = (X, \XX) = \left(B(\omega), (\int B \otimes d B) (\omega) \right)=: \mathbf{B}^{\mathrm{It\^o}} (\omega).
\]
As is well-known, e.g. \cite[Ch.9]{FH20}, under natural conditions, $\bar{Y} (\omega) := Y^\X|_{\X = \mathbf{B}^{\mathrm{It\^o}} (\omega)}$ yields a beneficial version of the It\^o solution to \eqref{eq:SDE}; the Stratonovich case is similar.

\medskip

It has been an open problem for some time to provide a unified approach to SDEs and RDEs, such as to give intrinsic meaning and well-posedness to
{\em rough stochastic differential equations} (RSDEs), aiming for an adapted solution process $Y=Y^{\X} (\omega)$ to
\begin{equation}\label{eq:dRDE}
       d Y_t = b_t (Y_t)dt + \sigma_t ( Y_t) dB_t + f_t ( Y_t) d \X_t .
\end{equation}
At this stage, \eqref{eq:dRDE} is entirely formal. Making this equation meaningful and providing a satisfactory general solution theory under natural conditions is the main purpose of this work.

{\em Why RSDEs?} The interest in such a construction comes from a variety of applications and is the raison d'\^etre of several ad-hoc approaches, reviewed (together with their limitations) below.
For instance, recent progress on fast-slow systems, cf. \cite{pei2021averaging,hairer2021generating},
involves mixed dynamics of the form 
\begin{equation}\label{eq:mSDE}
        d Y_t = b (Y_t)dt + \sigma ( Y_t) dB_t + f ( Y_t) \circ dW^H;
\end{equation}
for some independent fractional Brownian noise $W^H$. This fits into \eqref{eq:dRDE} provided $W^H$ has a (canonical) rough path lift, which is well understood. Such dynamics also arise in quantitative finance
\cite{gatheral2018volatility, bayer2016pricing} where one naturally mixes $(dt,dB)$-modeled semimartingale dynamics (for tradable assets) with ``rough'' fractional $dW^H$-dynamics for volatility.
Perhaps the strongest case for \eqref{eq:dRDE}, which relates to works spanning over 4 decades (selected references below), comes in the form of ``doubly SDEs'' under conditioning; that is, 
\begin{equation}\label{eq:dSDE}
       d Y_t = b_t (Y_t)dt + \sigma_t ( Y_t) dB_t + f_t ( Y_t) \circ d W_t =  \tilde b_t (Y_t)dt + \sigma_t ( Y_t) dB_t + f_t ( Y_t) d W_t
\end{equation}
{\em conditionally} on some independent Brownian motion $W = W(\omega)$. Let us describe some concrete situations.
\medskip

\noindent (a) In the ``Markovian'' case of deterministic coefficient fields, with $ \cff^W_T = \sigma \{ W_t : t \le T\} $,
\begin{eqnarray}
u( s,y;\omega) &=& \E \left( h\left( Y^{s,y}_{T}\right) \exp \left(
\int_{s}^{T}c\left( Y^{s,y}_{t}\right) dt+\int_{s}^{T}\gamma \left( Y^{s,y}_{t}\right)
d W \right) \Big| \cff^W_T \right)  \label{preroughFKformula},
\end{eqnarray}%
yields the Feynman-Kac solution for the (terminal value) SPDE problem, $u(T,\cdot) = h$,
\begin{equation}\label{eq:bSPDE}
 - d_tu=\left(\tfrac{1}{2}\mathrm{Tr}\left( (\sigma \sigma ^{T})\left( x\right)
D^{2}u\right) + b\left( x\right) \cdot Du +c\left(
x\right) u\right)dt+ ( f\left( x\right) \cdot
Du +\gamma \left( x\right) u) \circ {dW_t},%
\end{equation}
with $\circ {dW}$ understood in backward Stratonovich sense. See e.g. \cite{pardoux1980stochastic, kunita1997stochastic, MR3746646} and many references therein. \\
(b) As noted explicitly by Bismut--Michel \cite{MR647076}, the conditional process $Y|W$ is not a semimartingale (since $W$ has a.s. locally infinite variation).
Their analysis then relies on auxiliary stochastic flows, obtained from $d \Phi_t (x) =  f ( \Phi_t (x) ) \circ d W_t, \Phi_0 (x) =x$, to remove the
troublesome ``$(...)d W$ {conditioned on} $W$'' term in \eqref{eq:dSDE}.

Conditional It\^o diffusions 
are also at the heart of stochastic filtering theory, e.g. \cite{el2012stochastic,MR2454694} and many references therein. Here $Y$ describes the signal dynamics, not necessarily Markovian, affected through the observation $W$ which, after a Girsanov change of measure, has Brownian statistics. The celebrated Kallianpur--Striebel formula expresses the filter (the conditional expectation of some observable $h$ of the signal, given the observation $W$) as the ratio $\pi_t(h) / \pi_t(1)$, where $\pi_t(h)$ has a similar form to \eqref{preroughFKformula}, with the exponential term coming from the
Girsanov theorem. %
Understanding the robustness of the filter with respect to $W$ is a classical question in filtering theory \cite{davis2011pathwise, MR3134732}.

\noindent (c) Stochastic flow transformations are also employed by \cite{MR2285722} where the authors have
controlled It\^o characteristics, $g \in \{ b, \sigma \}: g_t (Y_t) = g (Y_t, \eta_t (\omega))$,
for suitably non-anticipating controls $\ctrl(\cdot)$, and study the random value function, for $0 \le s \le T$,
\begin{equation}   \label{equ:value}
          v (s,y; \omega) :=\mathrm{essinf}_{\ctrl (\cdot) }  \E \left(  h (Y^{s,y,\ctrl}_T) +  \int_s^T \ell (t, Y^{s,y,\ctrl}_t; \ctrl_t) d t \Big| \cff^W_T \right).
\end{equation}
This {\em pathwise stochastic control} problem was first suggested in \cite{MR1647162}, albeit with constant $f (\cdot) \equiv f$ in dynamics \eqref{eq:dSDE}, as motivation for stochastic viscosity theory:
According to \cite{MR1647162, MR2285722, bhauryal2023pathwisestochasticcontrolclass} the value function defined in \eqref{equ:value} is a ``stochastic viscosity solution'' for a nonlinear stochastic PDE of the form
\begin{equation}   \label{equ:sHJB}
    - d_tv=  \inf_{\ctrl} \left (\tfrac{1}{2}\mathrm{Tr}\left(
    (\sigma \sigma ^{T})\left( x,  \ctrl\right)
    \left( x\right)
D^{2}v\right) + b\left( x, \ctrl \right) \cdot Dv \right) d t 
+ 
(f\left( x\right) \cdot Dv) \circ {dW_t}.
\end{equation}
Classical HJB (viscosity) equations, contained herein upon taking $f \equiv 0$, may not admit solutions with $\C^1$ spatial regularity,   
so there is little hope to give \eqref{equ:sHJB} a bona fide (backward It\^o/Stratonovich) stochastic integral meaning: Accordingly, \cite{MR1647162} propose a pathwise theory;
non-constant $f$ in \eqref{equ:sHJB} requires rough paths \cite{MR2765508, allan2020pathwise,chakraborty2024pathwiserelaxedoptimalcontrol}.

\noindent (d) Another motivating example comes from weakly interacting particle systems, driven by independent Brownian motions $B^1, ..., B^N$, subjected additionally to environmental (a.k.a. common) Brownian noise $W$.
Under suitable assumptions, one has conditional propagation of chaos, cf. \cite{CF16},
with the effective dynamics \eqref{eq:dSDE} of such a system governed by conditional McKean--Vlasov dynamics, with $g \in \{ b, \sigma, f \}: g_t (Y_t) = g (Y_t, \mathrm{Law} (Y_t | \cff^W_T  ))$. In a Markovian situation,
the law of this process follows a non-linear, non-local stochastic Fokker--Planck equation \cite{coghi2019stochastic, coghi2019rough}. The case of controlled McKean--Vlasov dynamics, $g_t (Y_t) = g (Y_t, \ctrl (\omega), \mathrm{Law} (Y_t | \cff^W_T  ))$ arises in the important area of mean-field games, e.g. \cite{carmona2018probabilistic}, with $W$ viewed as common noise. The conditional analysis of such equations, with common noise and progressive coefficients, is also central to \cite{lacker2022superposition}.

\medskip

Whether $W$ is interpreted as noise, observation, environment, or common noise, the importance of quantifying its impact on some stochastic model $Y | W$, or predication based thereon, is evident. We shall see that RSDEs, as developed in this work, do this in a satisfying way. Our (in SDE terminology) ``strong'' analysis not only removes tedious measure theoretical issues inherit to the conditional problems, but yields  a fundamental {\em partial} decomposition of It\^o-map: writing  $\bar{Y} \equiv Y$ for the SDE solution to \eqref{eq:dSDE}, driven by $(B,W)$ and with given initial data, we can decompose
\begin{equation} \label{equ:FDC}
	\bar{Y} (\omega) = (Y^\bullet) \circ \mathbf{W} (\omega), \quad (Y^\bullet): \X \mapsto Y^\X (\omega),
\end{equation}
into a (well understood) universal lifting map $\mathscr{L} : W \mapsto \mathbf{W} (\omega)$, and a robust RSDE solution  $Y^\bullet$. 
(For completeness, we show in Appendix \ref{app:RSDE} how this leads to the first equality in \eqref{equ:FDC}, together with a robust disintegration of $\law{(\bar{Y})}$, given $W$.)  This picture is reminiscent of
Lyons' original work, aiming to decompose SDE solutions as (deterministic) RDE solutions driven $\omega$-wise by a lifted Brownian motion. Yet, existing rough path tools are quite insufficient for our goals. Before commenting on the new techniques involved, we give a loose statement of our main result.

\begin{theorem}\label{thm.1}
  Under suitable regularity and boundedness assumptions on (possibly progressive) coefficient fields $b, \sigma, f$, consistent with those from It\^o SDE and RDE theories, there is
  a unique strong RSDE solution to \eqref{eq:dRDE}, to which we give intrinsic local and integral sense. The solution is exponentially integrable
  and comes with precise local Lipschitz estimates with respect to $\{Y_0, b, \sigma, f, \X \}$.
 \end{theorem}

This result allows to treat a variety of situations (ranging from mixed SDEs, pathwise filtering and stochastic control to common noise McKean--Vlasov and its particle approximations) in the desired (rough)pathwise fashion, that is, with $W^H$ or $W$ replaced by a deterministic rough path $\X$. Using the language of diffusions in random environments, we offer a {\em quenched} theory, with $\X$ seen as frozen environmental noise. At any stage, one can
return to the {\em annealed} (``doubly stochastic'') setting by randomization of $\X$, as discussed in Appendix \ref{app:RSDE}, based on \cite{FLZ25}.

\medskip
\cref{thm.1} is a loose summary of  \cref{thm.fixpoint} (existence, uniqueness), \cref{cor.expInt} (exponential integrability) 
and \cref{thm.stability_precise} (stability and local Lipschitz estimates).

 Central to our analysis is a new class of processes, {\em stochastic controlled rough paths} (s.c.r.p.), conceptually related to {\em rough semimartingales} \cite{friz2023rough} in their ability to mix martingales and adapted controlled processes, but analytically very different, based on an extension of
{\em stochastic sewing} \cite{le2018stochastic} to mixed $L_{m,n} (\Omega)$-spaces, cf. \cref{sub.ssl}. The resulting s.c.r.p.'s crucially involve two $\mathbb{P}$-integrability parameters which allow us to detangle
an (inevitable) loss of integrability (of the sort $L_{m,n} \to L_{m,n/2}$) upon composition of a s.c.r.p. with spatially regular (non-linear) $f$ and more general stochastic controlled vector fields (s.c.v.f.). While $m=n=\infty$ does not even accommodate Brownian motion, leave alone other reasonable classes of solution processes, it turns out that $m<n=\infty$ does.
After developing a rough integration theory for s.c.r.p. (\cref{sub.RSI}) we can close the loop in a fixed-point argument in our construction of a unique solution. 
Like s.c.r.p.'s, we should remark that s.c.v.f.'s (\cref{sub.stability}) 
have no counterpart in the deterministic rough paths literature.
While natural, our motivation for this kind of generality is rooted in the application to interacting particle systems with rough common noise with rough  (``quenched'') McKean--Vlasov limit, subject of our work \cite{friz2025McKean}. In this case $f$ not
only depends on $y = Y_t \equiv Y^\X_t$ but comes with a non-regular time dependence induced by  $t \mapsto \law (Y^\X_t ) \equiv \law (Y_t ; \X)$, 
 or a random approximation thereof, namely the empirical measure of the particle cloud.

RSDE well-posedness is complemented with precise estimate of local Lipschitz type in the data (\cref{thm.stability_precise}). In the so-called critical case local Lipschitz estimates are lost, but the problem remains well-posed (\cref{sub:uniqueness_when_}), thanks to a ``stochastic, rough Davie--Gr\"onwall-type lemma (\cref{sub.davie_gronwall_type_lemmas}) which may be useful in its own right.
\footnote{Readers familiar with previous (arXiv) versions of this article may note a simplified direct proof of the local Lipschitz estimates, without reliance on the technical Davie--Gr\"onwall lemma.}

In \cref{sec:ito} {\em rough It\^o processes} are introduced, which provide a flexible class, beyond the semi-martingale world, for which one has an It\^o-type formula. A rough stochastic calculus emerges, of which we can here only scratch at the surface: we introduce the {\em rough martingale problem} and further give an effective rough Fokker--Planck equation for RSDE, in a generality that also applies immediately to solutions of McKean--Vlasov SDEs with (rough) common noise, as provided by \cite{friz2025McKean}.

Closely related to the rough martingale problem, our final section \cref{sub:weak_existence} makes the point that the ``strong'' RSDEs theory of  \cref{thm.1} also has a ``weak'' counterpart.
Many natural questions emerge, starting with well-posedness for non-degenerate low regularity coefficients \`a la Stroock--Varadhan, with accompanying analytic questions for rough PDEs. We finally mention the possibility of a localized RSDE theory, a systematic study of which is left for a future note.\footnote{Partial results are contained in previous  (arXiv) versions of this article.}

\medskip

\noindent {\em Previously on RSDEs}. 
Assume $d \Phi^\X_t (x) =  f ( \Phi^\X_t (x) ) d\X_t, \Phi^\X_0 (x) =x$ is well posed, $\X$ is a rough geometric path. The {\em flow transformation} (FT) method for RSDEs amounts to {\em define} $Y_{\mathrm{FT}}^\X := \Phi^\X (\tilde Y)$, in terms of a distorted It\^o SDE for
\[
	\tilde Y (\omega) = (\Phi^\X)^{-1} (Y^\X_{\mathrm{FT}}), \quad d \tilde Y_t = \tilde b_t (Y_t; \X)dt + \tilde \sigma_t ( Y_t; \X) dB_t.
\]
This construction, classical in the SDE case, goes back to \cite{MR3134732} for RSDEs, where it was seen that, for $\X$ of Brownian regularity, it is necessary $f \in \C^{5^+}$ to have local Lipschitz dependence of
$\X \mapsto Y_{\mathrm{FT}}^\X$. (In contrast,  \cref{thm.1} gives this 
under the expected minimal $f \in \C^{2+}$ condition.)
Excessive regularity demands aside, FT methods are rather rigid and do not cope well with general $f=f_t (y, \omega)$, as is possible in \cref{thm.1}, and needed for instance in the common noise McKean--Vlasov situation described above. Even if one consents to a structural restriction like $f=f(Y_t)$, a flow-based definition of solution
lacks the intrinsic and local meaning that is relevant, for instance, when studying discretizations of RSDE dynamics.

As a concrete example, let $s,t$ be consecutive points in some partition $\pi$ of $[0,T]$ and consider the ``Euler-in-$B$, Milstein-in-$\X$'' scheme 
\begin{equation}   \label{equ:HybridScheme}
          Y^\pi_{t} =Y^\pi_{s} + b_s (Y^\pi_s) (t-s) + \sigma_s ( Y^\pi_s) (B_t - B_s)  + f_s ( Y^\pi_s)  (X_t - X_s) + F_s(Y^\pi_s) \XX_{s,t} ,
\end{equation}
with $F = (D_y f)f+f'$ where $f'$ accounts for possible $X$-controlled time dependence of $f$. A convergence analysis of this scheme would be tedious to carry out from a FT perspective. In contrast, local RSDE estimates as provided in \cref{prop.davie} make it at least plausible that this can be done efficiently in the framework of this work, content of forthcoming work. 
 When applied in the (rough) pathwise control setting, that is, the rough counterpart of \eqref{equ:value} with
\[
	v^\X (s,y) :=\mathrm{inf}_{\ctrl (\cdot) }  \E \left(  h (Y^{\X; s,y,\ctrl}_T) +  \int_s^T \ell (t, Y^{\X; s,y,\ctrl}_t; \ctrl_t) d t \right),
\]
this opens up to the possibility to study the finite difference of nonlinear stochastic PDEs of the form \eqref{equ:sHJB}, not implied (unless $f$ is constant) by presently available theory \cite{seeger2020approximation}. We also note in passing that the above expression for $v^\X$ in conjunction with precise RSDE estimates  (\cref{thm.stability_precise}) gives a direct approach to estimating H\"older space time regularity of such SPDEs, valid for every (rough path) realization of the driving noise. This is a powerful way to obtain regularity results for stochastic HJB equations (problem left open in \cite{MR2285722}) and can also be compared with recent work \cite{cardaliaguet2021holder}.

\medskip

A second previous approach, dubbed {\em random rough path} (RRP) method, amounts to define  $Y_{\mathrm{RRP}}^\X (\omega) := \hat Y^{\Z (\omega)}$ as the $\omega$-wise solution to the RDE
\begin{equation}   \label{equ:rRDE}
       d \hat Y_t=
		b_t (\hat Y_t)dt + (\sigma_t, f_t)(\hat Y_t)d\Z_t (\omega) ,
\end{equation}
driven by the random rough path $\Z (\omega)$ over $Z(\omega):=(B(\omega),\X)$, where the second level $\mathbb{Z}(\omega)$ is naturally specified via 4 blocks, given by
\[
	\int B \otimes dB,\, \int X \otimes dB,\, \int B \otimes dX := B\otimes X- \int (dB)\otimes X \text{ and } \XX.
\]
Here, all $dB$-integrals are in It\^o sense, $\mathbb{X}$ is the second level component of $\X$. This construction is due to \cite{MR3274695}, see \cite{friz2023rough} for the case of c\`adl\`ag martingale and $p$-rough paths.
It was also used in \cite{MR3746646} for intrinsic well-posedness of the RPDEs counterpart of \eqref{eq:bSPDE}, with $W$ replaced by $\X$, and most recently for McKean--Vlasov equations with (rough) common noise \cite{coghi2019rough}, to be distinguished from rough McKean--Vlasov (or mean field) equations \cite{bailleul2020,bailleul2020propagation} which also have a RRP flavor.\footnote{In these works, $Z(\omega)$ is a joint lift of $\X (\omega')$ and $\X (\omega'')$  for a suitable random rough path $\X$. No martingale structure is assumed.}
As a general remark, after the construction of a suitable joint lift $\Z=\Z(\omega)$, RPP methods rely on deterministic analysis and cannot benefit from the (partial) martingale structure inherent in RSDEs. This becomes
a serious issue for integrability, e.g. for exponential terms as seen in the rough counterpart (replace $W$ by $\X$) of \eqref{preroughFKformula}, 
and an insurmountability when it comes to
general progressive randomness in coefficients, 
a situation that cannot be dealt with by RRP methods. 
Indeed, the
It\^o coefficient field $\sigma = \sigma_t (\cdot, \omega)$ is now subject to the stringent space-time regularity and rigid controlledness conditions of vector fields in RDE theory.
This entails that the (minimal) Lipschitz-condition one expects from It\^o SDE theory has to be replaced by a suboptimal $\C^{1/\alpha}$-condition,
and further rules out general (progressive) time-dependence, as would be required to incorporate stochastic control aspects in \eqref{equ:rRDE}.
(All these limitations are removed by our \cref{thm.1}.)

\medskip

\noindent {\bf Summary and outlook}: 
Based on a complete intertwining of stochastic and rough analysis, the RSDE framework put forward in this work offers a powerful approach to many problems previously treated with flow transformation and/or random rough path methods. Immediate benefits include  the removal of excessive regularity demands seen in (all) such previous works, intrinsic (local) meaning to the equations of interest, and a significant relaxation of previously imposed structural assumptions (e.g. progressive vs. deterministic coefficients fields, as required in stochastic control). Concerning the outlook, our results and techniques  are of direct interest  for stochastic analysis (``partial'' Malliavin calculus, H\"ormander theory, random heat kernels ... ) of conditional processes, as studied by Bismut, Kunita, Nualart, and many others in the 80/90ties\footnote{After this work was made available, a first study of Malliavin calculus for RSDEs has been carried out in  \cite{bugini2024malliavincalculusroughstochastic}.}. We also have first evidence that our framework enables a ``robust'' conditional analysis of doubly stochastic backward SDEs \cite{pardoux1994backward, diehl2012backward}. Further uses can be expected in the vast field of mean field games (with common noise). Last not least, we envision extensions from rough SDEs to rough SPDEs, as may arise from the filtering of non-linear SPDEs. The present work is of foundational nature.
%
%
%

\medskip

\noindent {\bf Update}: At revision stage, we may point to \cite{bugini2025rough, FLZ24x, BFS24x, bugini2024malliavincalculusroughstochastic, friz2025McKean, BFS25, BFHL25} for direct developments based on this work. 

\noindent {\bf Acknowledgment}: 
PKF acknowledges support from DFG CRC/TRR 388 ``Rough Analysis, Stochastic Dynamics and Related Fields'' (project ID: 390685689), as well as a MATH+ Distinguished Fellowship from the Berlin Mathematics Research Center MATH+ (EXC-2046/1, project ID: 390685689). PKF also acknowledges initial support from the European Research Council, via the Consolidator Grant ``GPSART''. AH was supported by DFG CRC 910 ``Control of self-organizing nonlinear systems: Theoretical methods and concepts of application'', Project A10. KL was initially supported by an Alexander von Humboldt Research Fellowship and the above ERC grant. 
KL is now supported be the Engineering \& Physical Sciences Research Council (EPSRC), grant number EP/Y016955/1.
 KL thanks Máté Gerencsér for various interesting discussions related to Davie–Grönwall-type lemma.

\subsection*{Frequently used notation} 
	\label{sub.rough_paths}
For two extended real numbers \( a,b\in \mathbf R\cup\{\infty\} \) we write \( a\wedge b=\min\{a,b\} \) and \( a\vee b=\max\{a,b\} .\)
If more parameters are presented, \( \min\{\dots\} \) and \( \max\{\dots\} \) will be used instead.
The Borel-algebra of a topological space $\mathcal T$ is denoted by $\Bor(\mathcal T)$.
Throughout the manuscript we fix a finite deterministic time horizon $T>0$.  Accordingly the notation \( I\subset [0,T]\) refers to a (generic) compact interval and  we denote by \( |I| \) its length.
	The notation $F\lesssim G$ means that $F\le CG$ for some positive constant $C$; similarly, \( F \asymp G \) means \( F \lesssim G \) and \( G \lesssim F \).
	For two Banach spaces \( (\mathcal X, |\cdot|_{\mathcal X}) \), \( (\mathcal Y,|\cdot|_{\mathcal Y}) \) with \( \mathcal X \subset\mathcal Y\), we write
\( \mathcal X\hookrightarrow \mathcal Y \) if \( \mathcal X \) is continuously embedded into \( \mathcal Y \), in the sense that \( |\cdot|_{\mathcal Y}\lesssim |\cdot|_{\mathcal X} \).
By $V,\Vone,W, \bar{W}$ we denote real finite-dimensional Banach spaces. Their norms are denoted indistinctly by \( |\cdot| \). 
	The Banach space of linear maps from $V$ to $W$, endowed with the induced norm \( |K|:=\sup_{v\in V,|v|\le1}|Kv| \), is denoted by $\cll(V,W)$. Tensor products are equipped with a norm such that \( V\otimes W\simeq \cll(V,W)\) isometrically and, accordingly, we shall blur the difference between \( \cll(V,\cll(\bar V,W)) \), \( \cll(V\otimes \bar V;W) \) and bilinear maps from \( V\times \bar V\to W \). All instances of \( \otimes \) in this manuscript pertain to finite-dimensional spaces.

\section{Preparations} 
\label{sec.preliminaries}

\subsection{Framework}
\label{sec:framework}
We introduce some basic spaces and concepts. 
\subsubsection{Function spaces} \label{sec:fs}
	We denote by \( (\Cb,|\cdot|_{\infty}) \) the Banach space of continuous and bounded maps, namely
	\[
	 \Cb =\Cb(\mathcal T;W)=\left \{f\colon \mathcal T\to W\enskip\text{ continuous and s.t.\ }
	 |f|_\infty<\infty
	 \right \},\quad |f|_\infty:=\sup_{x\in \mathcal T}|f(x)|\,.
	\]
	For every $\alpha\in(0,1]$ and every function $g:V\to W$, we denote by $[g]_\alpha$ its H\"older seminorm, i.e.
	\[
		[g]_\alpha=\sup_{x,y\in V:x\neq y}\frac{|g(x)-g(y)|_W}{|x-y|_V^\alpha}.
	\]
	For $\kappa=N+\alpha$ where $N$ is a non-negative integer and $0<\alpha\le 1$, $\C^\kappa_b(V;W)$ denotes the Lipschitz space of bounded functions $f\colon V\to W$ such that $f$ has Fr\'echet derivatives up to order $N$, $D^jf$, $j=1,\ldots,N$ are bounded functions  and $D^Nf$ is globally H\"older continuous with exponent $\alpha$.
	Recall that for each $v\in V$, $Df(v)\in\cll(V,W)$, $D^2f(v)\in\cll(V\otimes V,W)$ and so on.
	Whenever clear from the context, we simply write  $\C^\kappa_b$ for $\C_b^\kappa(V;W)$.
	For each $f$ in $\C_b^\kappa$, we have seminorm and norm, respectively, given by
	\[
			[f]_\kappa=\sum\nolimits_{k=1}^N|D^kf|_\infty+[D^Nf]_\alpha
			\tand |f|_\kappa=|f|_\infty+[f]_\kappa.
	\]

	\subsubsection{Rough paths} \label{sec:rps}
	Given a compact interval \( I\subset[0,T] \) we shall work with the simplices $\Delta(I)$ and $\Deltatwo(I),$ defined as
	\[
	\begin{aligned}
	&\Delta (I) :=\{(s,t)\in I^2,\enskip \min I\le s\leq t\le \max I\},
	\\
	&\Deltatwo(I):=\{(s,u,t)\in I^3,\enskip \min I\leq s\leq u \leq t\leq \max I\}.
	\end{aligned}
	\]
	We write \( \Delta=\Delta(I) \) and \( \Deltatwo=\Deltatwo(I) \) whenever clear from the context.
	As is common in the rough path literature, given a path $Y=(Y_t)\colon I\to W$, we denote by $(\delta Y_{s,t})_{(s,t)\in\Delta }$ the increment of $Y$, which is the two-parameter map
	\begin{equation}  
	\label{nota:increments}
	\delta Y_{s,t}:=Y_t-Y_s,\quad \text{for every }\enskip  (s,t)\in\Delta .
	\end{equation}
	The (Banach) space $C^\alpha(I;V)$ then consists of all paths $Y: I \to V$ with finite semi-norm and norm 
	\begin{equation}	\label{holder_1}
 		[Y]_\alpha := |\delta Y| _\alpha := \sup_{s,t\in V:s\neq t}\frac{|\delta Y_{s,t})|}{|t-s|^\alpha} \tand |Y|_{\alpha }:= |Y|_\infty + |\delta Y|_{\alpha },
	\end{equation}	
	where $|Y|_\infty  = \sup _{t \in I}|Y_t| $. More generally, the (Banach) space $C_2^{\alpha}(I;V)$, consists of those two-parameter maps $A\colon\Delta \to W$ with finite norm
	\begin{equation}	\label{holder_2}
 		|A|_\alpha :=\sup_{(s,t)\in\Delta,s\neq t }\frac{|A_{s,t}|}{(t-s)^\alpha }. 
	\end{equation}
        Sewing arguments also require three-parameter maps $\delta A\colon\Deltatwo\to W$ given by
	\begin{equation}
	\label{nota:increments_2}
	\delta A_{s,u,t}:= A_{s,t}-A_{s,u}-A_{u,t},\quad \text{for every}\enskip (s,u,t)\in \Deltatwo\,.
	\end{equation}

We recall the definition of a (level-two) \( \alpha \)-H\"older rough path, as seen e.g. in \cite{FH20}.
	\begin{definition}\label{def.RP}
		Fix $\alpha \in (\frac13,\frac12]$ and call $\X=(X,\XX)$ a $\alpha $-H\"older \textit{rough path} on $I\subset[0,T]$ with values in $V$, in symbols $\X \in \mathscr{C}^\alpha (I;V)$, provided
		\begin{enumerate}[(a)]
		 	\item $(X,\XX)$ belongs to $C^\alpha(I;V)\times C_2^{2 \alpha}(I;V\otimes V)$, 
		 	\item for every $(s,u,t)\in \Deltatwo$, Chen's relation holds
			\begin{equation}\label{chen}
	    \XX_{s,t}-\XX_{s,u}-\XX_{u,t}=\delta X_{s,u}\otimes \delta X_{u,t}\,.
			\end{equation}
		 \end{enumerate}
	\end{definition}
	For $\alpha' \in [0,\alpha]$ we will measure the distance of $\X,\bar\X\in\CC^\alpha$ with
	\begin{equation}
	\label{def.rho_metric}
		\rho_{\alpha,\alpha'}(\X,\bar\X) :=(|\delta X- \delta\bar X|_\alpha+|\XX-\bar\XX|_{\alpha+ \alpha'}), 
	\end{equation}
	often with $\alpha = \alpha'$ in which case we write \( \rho_{\alpha} = \rho_{\alpha,\alpha}\); we also the usual $\alpha$-H\"older rough path metric. 
	We also set $$\rho_{\alpha,\alpha'}(\X) :=|\delta X|_\alpha+|\XX|_{\alpha+ \alpha'}$$ and $ \rho_{\alpha}$ accordingly. 
	Another useful quantity 
	 Every smooth path $X\colon[0,T] \to V$ gives rise to a canonical rough path lift, with $\XX_{s,t} = \int_s^t \delta X_{s,r} \otimes d X_r$; we write  $\CC^{0,\alpha}_g$ for the closure of such canonically lifted smooth paths in $\CC^\alpha$.

\subsubsection{Stochastic setup} \label{sec:ss}
From now on, we work on a fixed complete probability space $(\Omega,\mathcal G,\P)$ equipped with a filtration $\{\cff_t\}$ with index set $[0,T]$, such that $\cff_0$ contains the $\P$-null sets. We also denote by $\W=(\Omega,\mathcal G, \P;\{\cff_t\})$ and call it a stochastic basis. Expectation with respect to $\P$ is denoted by $\E$.

\paragraph{Random variables and stochastic processes} Consider a generic, non-necessarily separable Banach space \( (\abx ,|\cdot|_{\abx})\).
	We call \( \xi\colon \Omega\to \abx \) a random variable if it is strongly \( \mathcal G/\Bor(\abx) \)-measurable; meaning that \( \xi \) is measurable relative to \( \mathcal G /\Bor(\abx)\) and separably valued.%
	\footnote{More precisely, \cite[Chap.~2]{MR1102015}) there exits a closed separable subspace \( \mathcal Y\subset \abx \) s.t.\ \( \mathbb P\circ\xi^{-1}\) is supported in \(\mathcal Y \); see also \cite[Appendix E]{cohn2013measure} or Chapter 1 in \cite{Hytnen2016} for more recent expositions.}
	Accordingly, we call $Y=Y(\omega,t)\in \abx $ a stochastic process if \( Y_t=Y(\cdot,t) \) forms a family of \( \abx \)-valued random variables. We call it adapted if for every $t \ge 0$, $Y_t$ is $\cff_t$-measurable, progressively measurable if $Y$, restricted to $\Omega \times [0,t]$, is strongly $\cff_t \otimes \Bor ([0,t])$-measurable.  
	


\paragraph{Lebesgue spaces} Write $L_0(\cgg;\abx)$ or $L_0(\abx)$ for the vector space space of all (strongly measurable)  random variables with values in $\abx$, with the usual convention that a.s. identical random variables are identified.  In particular, then $|\xi|_{\mathcal X} \in L_0^+(\R)$, the space of non-negative real-valued random variables. By definition, $L_m\left(\Omega,\mathcal G,\P;\abx\right)$ consists of all random variable $\xi$ with $|\xi|_{\mathcal X} \in L_m\left(\Omega,\mathcal G,\P;\R\right)$, any $m \le \infty$; we freely use shortened notation $L_m(\cgg;\abx), L_m^{
\cgg}(\abx),  L_m(\abx), L_m^{\cgg}, L_m$. 
For $\xi \in L_0 (\abx)$ we set
	 \[
	 	\|\xi\|_{m} := (\E |\xi|_{\mathcal X}^m )^{1/m} \in [0,\infty], \; m < \infty,
	 \]
	 and also $\|\xi\|_{\infty}:= \esssup |\xi|_{\mathcal X} \in [0,\infty]$. For $m \ge 1$, this makes $L_m(\abx)$ a Banach space.
	The notion of conditional expectation, classical treated for $L_1(\R)$-random variables, extends to $L_1(\abx)$-valued and also to $L_0^+(\R)$-valued random variables (see e.g. \cite[Ch.4]{Li2017} and \cite[Lem 3.1]{JP04}.)

\paragraph{Moment H\"older spaces}
	The increment notation \eqref{nota:increments}, \eqref{nota:increments_2} applies (pointwise) to stochastic processes $Y\colon\Omega\times I\to \abx$ and $A\colon\Omega\times\Delta(I) \to \abx$. We call $Y,A$ integrable ($L_m$-integrable) if $Y_t,A_{s,t}$ are integrable ($L_m$-integrable) for every $t \in I, (s,t)\in \Delta$, respectively.
	Adapting \eqref{holder_1}, \eqref{holder_2} to the process setting, we can define (when $m \ge 1$: Banach) spaces $C^{\alpha}L_{m}(I,\W;\abx)$ and $C_2^{\alpha}L_{m}(I,\W;\abx)$ where, respectively,\footnote{Note that $ \| \cdot \|_{0;m} \asymp \|\cdot \|_{\infty;m} $.}
	\begin{equation} \label{Yam} 
	\|Y\|_{\alpha,m}:=\sup_{t\in I}\|Y_t\|_{m} + \|\delta Y\|_{\alpha ,m}  < \infty \, ,
	\end{equation}	
	\begin{equation} \label{Aam} 
	\|A\|_{\alpha ,m}:= \sup_{(s,t)\in\Delta,s\neq t } \frac{\|A_{s,t}\|_{m}}{(t-s)^\alpha }  < \infty \,.
	\end{equation}
A generalization where $\|A_{s,t}\|_{m}$ is replaced by a ``mixed'' moment norm $\|\|A_{s,t}|\cff_s\|_m\|_n$ is given in Definition \ref{def.2amn} below and of central importance to this work. 	

	

\subsection{Spaces of mixed integrability} 
\label{sub.processes_with_finite_}
We 
introduce a family of integrable two-parameter processes with suitable regularity and integrability properties with respect to a fixed filtration. The linear spaces formed by these stochastic processes are the foundation of our analysis in later sections.

To this aim, an important concept that needs to be discussed is that of ``mixed integrability''. Even though it will be mostly used in the context of stochastic processes and a filtration, it is first better understood at the level of random variables, for which a sole sub-sigma algebra \( \mathcal F\subset \mathcal G \) suffices.
It is very likely that these spaces have been introduced before, however finding appropriate references turns out to be difficult. For instance, the \( L_{1,\infty} \)-norm appears implicitly in \cite[Appendix A]{MR2190038} (aimed at \( L_p \)-estimates for certain singular integral operators) but not exactly (compare the left hand side of  \cite[eq.~Appendix A(1.1)]{MR2190038} with the quantity introduced in \cref{def.C2mn}\ref{itm:C2mn} below).  

\subsubsection{Random variables of mixed integrability} \label{sec:mms}
Let \( 0 < m, n \le \infty\), and 
fix a sub-sigma-field $\cff \subset \cgg$.
For a $\abx$-valued random variable \( \xi(\omega)\), have $ | \xi |_{\abx}  \in L_0^+(\cgg)$ so that   
\begin{equation}
	\label{Lmn_norm}
		 \| \xi | \cff \|_m := \left[ \E (| \xi |_{\abx}^m | \cff)
   \right]^{\frac{1}{m}} \in L_0^+(\cff), \quad \| \xi \|_{m, n} \assign
   \left\{\begin{array}{l}
     \|\| \xi |\mathcal{F} \|_m \|_n \qquad \text{if } \| \xi | \cff \|_m \in L_n\\
     + \infty \text{\qquad \qquad  \quad  otherwise}
   \end{array}\right. 
\end{equation}
is well-defined. (As always, the cases $m$ or $n=\infty$ are understood in $\esssup$-sense.)  This yields a scale of  mixed moment spaces, 
\begin{equation}
	\label{Lmn_space}
L_{m,n} (\abx) = L_{m,n}^{\cff, \cgg} (\abx) = \{ \xi \in L_m:  \|\xi\|_{m,n} < \infty \}.
\end{equation}
The following simple remarks will be used tacitly in the rest of the paper.
\begin{remark}
\label{rem:nothing_happens}
(i) When $m=n$, by the tower property, we see $L_{m,m}^{\cff, \cgg} = L_m^\cgg$, so that there is no dependence on $\cff$.

(ii) For $m \ne n$, loosely speaking, $\cff$ modulates the level of integrability, as seen from the (immediate to verify) extreme cases
$ L_{m,n}^{\{ \emptyset,\Omega \} , \cgg} =L_m^{\cgg}, \quad
L_{m,n}^{\cgg , \cgg} =L_n^{\cgg}. $
(It is generally true that $L_{m \vee n} \subset L_{m,n} \subset L_m$; cf. 
\eqref{embeddings} 
below.) 

(iii) For any $\xi \in L_0 (\cff; \abx)$, we have
\( \|\xi|\mathcal F\|_m=|\xi|_{\abx} \)
and hence $\| \xi \|_{m,n} = \| \xi \|_{n}$.
\end{remark}
With definitions \eqref{Lmn_norm}, \eqref{Lmn_space} at hand, we record some basic properties.
\begin{proposition}
\label{prop.Lmn}
Let \( 1\le m\le n \le \infty\). 
\begin{itemize}
\item[(i)]  \(\left ( L_{m,n} , \|\cdot\|_{m,n}\right )\) is Banach space, which coincides with \(\left ( L_{m} , \|\cdot\|_{m}\right )\)  when $m=n$. 

\item[(ii)] For all random variables \( \xi \in L_0 \), we have\footnote{Recall $\xi\mapsto\|\xi\|_{m} = + \infty$ when $\xi \in L_0 \backslash  L_m$}
\begin{equation}
\label{ineq.mn}
\|\xi\|_m\le\|\xi\|_{m,n} \le\|\xi\|_n \le \infty,
\end{equation}
and continuous embeddings
\begin{equation}
\label{embeddings}
L_n\hookrightarrow L_{m,n}\hookrightarrow L_m\,.
\end{equation}
\item[(iii)] Lower semicontinuity. Let  $(\zeta^k)_k \subset L_{m}$ with
a.s. limit $\zeta$. Then
\[ \| \zeta  \|_{m, n} \le \liminf_{k } \| \zeta^k \|_{m, n} \le\infty.
\]
\item[(iv)] Let $\xi^k \rightarrow \xi \in L_m$.
Then 
\[ \| \xi \|_{m, n} \le \limsup_{k } \| \xi^k \|_{m, n} \le
   \infty. \]
\end{itemize}
\end{proposition}
\begin{proof}
One has \( \|\|\xi|\mathcal F\|_m\|_m = \E(\E(|\xi|_{\abx}^m|\mathcal F)^{\frac mm})^{\frac1m}=\|\xi\|_m \) by tower property of conditional expectations, which shows $L_{m,m} = L_m$.
Next, we  show \eqref{ineq.mn}.
The left inequality is a simple consequence of Jensen inequality, which asserts that, since \( n\ge m \),
\begin{align*}
\|\xi\|_m=\|\|\xi|\cff\|_m\|_m\le \|\|\xi|\cff\|_m\|_n=\|\xi\|_{m,n}.
\end{align*}
For the right inequality, use Jensen inequality in conditional form to see
\begin{align*}
\|\xi\|_{m,n}=\|\|\xi|\cff\|_m\|_n\le \|\|\xi|\cff\|_n\|_n=\|\xi\|_n.
\end{align*}
Embeddings \eqref{embeddings} follow immediately from \eqref{ineq.mn}.

{\itshape Banach property.}
The space \( L_{m,n} \) is clearly linear.
To show completeness, suppose that $\{\xi^k\}_k$ is a Cauchy sequence in $L_{m,n}$. Since $L_{m}$ is complete, the left part of \eqref{ineq.mn} shows that we can find $\xi\in L_m$ such that $\lim_k \xi^k=\xi$ in $L_{m}$.
	For each $\varepsilon>0$, let $M_\varepsilon>0$ be such that
	\begin{equation*}
	\E \left[ \left(\E\Big(|\xi^k-\xi^l|_{\abx}^m\Big|\mathcal F\Big)\right) ^{\frac nm} \right] <\varepsilon \quad\forall k,l\ge M_\varepsilon\,.
	\end{equation*}
	Next, we choose a subsequence $\{l_i\}$ such that $l_i\ge M_\varepsilon$ and $\lim_{i}\xi^{l_i}=\xi$ a.s. Iterated use of (resp. conditional and classical) Fatou's lemma, shows that for each $k\ge M_\varepsilon$,
	\begin{align*}
	\E \left[\left(\E\Big(|\xi^k-\xi|_{\abx}^m\Big|\mathcal F\Big)\right)^{\frac nm} \right] \le \liminf_{i}\E \left[ \left(\E\Big(|\xi^k-\xi^{l_i}|_{\abx}^m\Big|\mathcal F\Big)\right)^{\frac nm} \right] <\varepsilon\,.
	\end{align*}
	Since $\varepsilon$ is arbitrarily small, we conclude that $\lim_k \|\|\xi^k-\xi|\mathcal F\|_m\|_n=0$.  This also shows that $\xi$ belongs to $L_{m,n}$, hence completeness. 

(iii) Iterated use of Fatou's lemma, as above. 

(iv) Thanks to $L_m$-convergence, we know $\xi^{k_j} \rightarrow \xi$ a.s.
along some subsequence. Applying (iii), we have
\[ \| \xi \|_{m, n}\le \liminf_{j } \| \xi^{k_j} \|_{m, n} \le \limsup_j \| \xi^{k_j}
   \|_{m, n} \le \limsup_k \| \xi^{k} \|_{m, n} . \]
\end{proof}


Next, we record a H\"older-type inequality for random variables of mixed integrability, related to a fixed a sub-sigma-field $\cff \subset \cgg$.
\begin{lemma}\label{lem.holder_mixed}
  Assume $p, p', p'' \in (0, \infty]$ with $1 / p \ge 1 / p' + 1 / p''$ and
  similar for $q, q', q''$. Then for any two random variables \( A, B \in L_0 (\cgg)\),
  we have
  \[ \| A B \|_{p, q} \le \| A \|_{p', q'} \times \| B \|_{p'', q''} .
  \]
  In particular, for any $m\in[2,\infty]$, we have 
   \[ \| \mathbb{E} (A B | \mathcal{F} ) \|_m
     \le \| A \|_m \times \| B \|_{2, \infty} \; ( \le \| A \|_m \times \| B \|_{m, \infty})  \]
\end{lemma}

\begin{proof}
  Conditional H{\"o}lder inequality gives, in a.s. sense,
  \[ \| A B | \mathcal{F}  \|_p \le \| A | \mathcal{F}
      \|_{p'} \times \| B | \mathcal{F}  \|_{p'} \]
  and the first inequality follows from taking $\| \cdot \|_q$ on both sides,
  followed by another application of H{\"o}lder inequality. The second inequality amounts to the special case  $(p, p', p'') = (1, m, 2)$ and $(q, q', q'') = (m, m, \infty)$.
\end{proof}


\subsubsection{Two parameter stochastic processes with mixed integrability}
Recall that $\{\cff_t\}$ is a filtration on a fixed complete probability space $(\Omega,\cgg,\P)$ and that we denote by \( \W=(\Omega,\cgg,\P;\{\cff_t\}) \).
For computational ease, we introduce the following shorthand notation for the rest of the paper:
	 \begin{equation}
	 \label{not:E_s}
	 \E_s = \E(\,\cdot\,|{\mathcal F_s})\quad \text{for all }s\in [0,T].
	 \end{equation}

In keeping with the previous considerations on random variables, we introduce a space of two parameter stochastic processes as follows.
\begin{definition}\label{def.C2mn}
Fix \( I\subset [0,T] \).
For each $m,n\in[1,\infty]$, $m\le n$, let
\[
C_2L_{m,n}(I,\W;\abx)\quad
\]
 be the space of \( \abx \)-valued, \( 2 \)-parameter stochastic processes $(s,t)\mapsto A_{s,t}$ such that
	\begin{enumerate}[label=(\alph*)]
		\item $A\colon \Omega\times \Delta(I)\to \abx$ is strongly $\mathcal G\otimes\Bor(\Delta(I))/\Bor( \abx)$-measurable,
		\item $A\colon\Delta(I)\to L_m(\Omega ;\abx)$ is continuous, 
		\item\label{itm:C2mn}  $\|A\|_{\infty;{m,n}}:=\sup_{(s,t)\in \Delta(I)}\|\|A_{s,t}|\cff_s\|_{m}\|_{n}<\infty$.
	\end{enumerate}
\end{definition}
For notational ease, we will sometimes abbreviate this space as \( C_2L_{m,n} \).
Clearly, changing the filtration $\{\cff_t\}$ changes the corresponding space. When  $m=n$ however, the choice of filtration makes no difference (see \cref{rem:nothing_happens}). In that case we will contract the two integrability indices and further abbreviate by $C_2L_n:=C_2L_{n,n}$.

Similarly, we introduce further subclasses of such two-parameter stochastic processes as follows.
\begin{definition}\label{def.2amn}
	Fix \( I\subset [0,T] \),
	let $\kappa\in (0,1]$ and $1\le m\le n\le \infty$, $m<\infty$.
\begin{itemize}
\item  The space $C_2^{\kappa}L_{m,n}(I,\W;\abx)$ consists of two-parameter processes $(A_{s,t})_{(s,t)\in\Delta }$ in $C_2L_{m,n}$ such that
	\begin{align} \label{Akmn} 
	\|A\|_{\kappa;m,n}:=\sup_{s<t\in I}\frac{\|\|A_{s,t}|\cff_s\|_m\|_n}{|t-s|^{\kappa}} <\infty.
	\end{align}
\item 	Similarly, the space
	$C^{\kappa}L_{m,n}(I,\W;\abx)$ contains all stochastic processes $Y\colon \Omega\times I\to \abx$ such that $t\mapsto Y_t$ belongs to $C(I;L_m(\abx))$%
	 and $(s,t)\mapsto \delta Y_{s,t}$ belongs to $C_2^{\kappa}L_{m,n}$.

It is equipped with the norm%
	\begin{equation}  \label{Ykmn} 
	\|Y\|_{\kappa;m,n}:=\sup_{t\in I}\|Y_t\|_{m}+\|\delta Y\|_{\kappa;m,n},
	\end{equation}
which makes it Banach (proof omitted).
\end{itemize}
\end{definition}
We record the process version of  \cref{prop.Lmn}, we will use it later in the contraction argument (proof of Theorem \ref{thm.fixpoint}) with $n=\infty$.

\begin{proposition}\label{prop.C2Lmn} Let $1\leq m\leq n\leq \infty$ and  \( \kappa\in(0,1] \). Then:
\begin{itemize}
\item[(i)] $C_2^\kappa L_{m,n}$ is a Banach space.
\item[(ii)] $C_2^\kappa L_{m',n'}\hookrightarrow C_2^{\kappa}L_{m,n}$ for every $m,n,m',n'\in[1,\infty]$  such that $m'\ge m$, $n'\ge n$, $m\le n$ and $m'\le n'$.
\item[(iii)] For each $A\in C_2^\kappa L_{m,n}$ such that $A_{s,t}$ is $\cff_s$-measurable $(s,t)\in \Delta$, then $A\in C_2L_{n,n}$ and $\|A\|_{\kappa;m,n}=\|A\|_{\kappa;n,n}$.
\item[(iv)] Let $A^k \rightarrow A$ in $C_2^{\kappa} L_m$. Then
\[ \|A\|_{\kappa ; m, n} \le \limsup_k \|A^k \|_{\kappa ; m, n} \le
   \infty . \]
\end{itemize}
The properties stated above remain true if each occurrence of \( C_2^\kappa L_{m,n} \) is replaced by \( C_2L_{m,n} \) and \( \|\cdot\|_{\kappa;m,n} \) by \( \|\cdot\|_{\infty;m,n} \).
\end{proposition}
\begin{proof}
The proofs for the first three properties are omitted as they essentially follow \cref{prop.Lmn,rem:nothing_happens}. 
For (iv) we first note that the assumed convergence in $C_2^{\kappa} L_m$, implies $\| A_{s, t}^k - A_{s, t} \|_m \rightarrow 0 $ for any fixed $(s, t)
\in \Delta$. By Proposition \ref{prop.Lmn},
\[   \|\|A_{s,t}|\cff_s\|_m\|_n 
     \le \limsup_k
     \|\|A_{s,t}^k|\cff_s\|_m\|_n
      \le \infty .
\]
Divide both sides by $|t - s|^{\kappa}$, estimate $\|\|A^k_{s,t}|\cff_s\|_m\|_n/
{|t - s|^{\kappa}}  \le \|A^k \|_{\kappa ; m, \infty}$, then take
${\sup_{\Delta}} $on left-hand side to conclude. 
\end{proof}

Lastly, we record an interesting exponential inequality, known as John--Nirenberg inequality.
\begin{proposition}[John--Nirenberg inequality]\label{prop.JNineq}
  Let $Y\colon\Omega\times I\to\abx$ be adapted process such that $\delta Y$ belongs to $C^\kappa_2L_{1,\infty}(I,\W;\abx)$ for some $\kappa\in(0,1]$. Assume that $Y$ is a.s.\ continuous.
 Then there are  finite constants $C,c>0$ which are independent from $Y,\kappa,|I|,\W,\abx$ such that
  \begin{align}\label{est.JohnNirenberg}
    \E e^{\lambda\sup_{t\in[0,T]}|\delta Y_{0,t}|_\abx}\le C e^{c(\lambda\|\delta Y\|_{\kappa;1,\infty} )^{1/\kappa}T}
    \quad\text{ for every }\quad\lambda>0.
  \end{align}
\end{proposition}
While the classical John--Nirenberg inequality (see e.g. \cite[Excercise A.3.2]{MR2190038}) implies that $\E e^{\lambda\sup_{t\in[0,T]}|\delta Y_{0,t}|_\abx}$ is finite for some $\lambda>0$, the explicit right-hand side of \eqref{est.JohnNirenberg} follows from a more recent argument from \cite{le2022BMO,le2022maximal}. For the reader's convenience, we include a self-contained proof in \cref{sec:john_nirenberg_inequality}.

\subsection{Stochastic sewing revisited} 
\label{sub.ssl}
The stochastic sewing lemma was introduced in \cite[Theorem 2.1]{le2018stochastic}.
In  \cref{prop.SSL} we provide an extension compatible 
with the mixed $L_{m,n}$-norm required in our analysis and also
show, cf Part \ref{SSL1} below, that stochastic sewing limits are uniform (on compacts) in time.


	\begin{theorem}[Stochastic Sewing Lemma]
	\label{prop.SSL}
		Let $2\le m\le n\le \infty$ be fixed, $m<\infty$.
		Let $A=(A_{s,t})_{(s,t)\in \Delta}$ be a stochastic process in $W$ such that $A_{s,s}=0$ and $A_{s,t}$ is $\cff_t$-measurable for every $(s,t)\in \Delta$.
		\begin{enumerate}[label=(\roman*)]
		\item\label{SSL1}
		 Suppose that there are finite constants $\Gamma _1, \Gamma_2\ge0$ and $\varepsilon _1,\varepsilon _2>0$ such that for any $(s,u,t)\in\Deltatwo$,
		\begin{gather}\label{con.dA1}
		\|\E_s[\delta A_{s,u,t}]\|_n\leq \Gamma_1(t-s)^{1+\varepsilon_1}
		\\\shortintertext{and}\label{con.dA2}
			\|\|\delta A_{s,u,t}|\cff_s\|_{m}\|_n\leq \Gamma _2(t-s)^{\frac12+\varepsilon _2}\,.
		\end{gather}

		Then, there exists a unique stochastic process $\A$ with values in $W$ satisfying the following properties
		\begin{itemize}
			\item 
			 $\A_0=0$, $\A$ is $\{\cff_t\}$-adapted, $\A_t-A_{0,t}$ is $L_m$-integrable for each $t\in[0,T]$;
			\item 
			 there are positive constants $C_1=C_1(\varepsilon_1),C_2=C_2(\varepsilon_2)$ such that for every $(s,t)\in \Delta$,
			 \begin{equation}\label{est.A1}
			 	\left \|\left \|\A_t-\A_s-A_{s,t}|\cff_s\right \|_{m}\right \|_n\leq C_1 \Gamma_1(t-s)^{1+\varepsilon _1} +C_2 \Gamma_2(t-s)^{\frac12+\varepsilon _2}\,
			 \end{equation}
			 and
			\begin{equation}\label{est.A2}
				\|\E_s(\A_t-\A_s-A_{s,t})\|_n\leq C_1 \Gamma_1(t-s)^{1+\varepsilon _1}\,.
			\end{equation}
		\end{itemize}

		\item\label{SSL2}
		 Suppose furthermore that for each $s\in[0,T]$, the map $t\mapsto A_{s,t}$ is a.s.\ c\`adl\`ag (resp.\ continuous) on $[s,T]$, and  there are finite constants $\varepsilon_3>0$ and $\Gamma_3\ge0$ such that for any $(s,t)\in \Delta$
		\begin{gather}\label{con.dA3a}
			\bigg\|\Big\|\sup_{u\in[(s+t)/2,t]}|\delta A_{s,(s+t)/2,u}|\Big|\cff_s\Big\|_m\bigg\|_n\le \Gamma_3(t-s)^{\frac 1m+\varepsilon_3}
			\quad\forall(s,t)\in \Delta.
		\end{gather}
		Let $\cpp=\{0=t_0<t_1<\cdots<t_N=T\}$ be a partition of $[0,T]$ and define for each $t\in[0,T]$,
		\begin{equation} \label{Acpp}
			A^\cpp_{t}:=\sum\nolimits_{i:t_i\le t}A_{t_i,t_{i+1}\wedge t}\,.
		\end{equation}
		Then $\A$ has a c\`adl\`ag (resp.\ continuous) version, denoted by the same notation, and for this version, we have
		\begin{align}\label{est.unifAPA}
			\bigg\|\Big\|\sup_{t\in[0,T]}|A^\cpp_t-\A_t|\Big|\cff_0\Big\|_m\bigg\|_n
		\le C |\cpp|^{\varepsilon_1\wedge \varepsilon_2 \wedge \varepsilon_3} (\Gamma_1+\Gamma_2+\Gamma_3)
		\end{align}
		where $|\cpp|:=\sup_{i}|t_i-t_{i+1}|$ and $C$ is some constant depending on $T, m, \varepsilon_1, \varepsilon_2, \varepsilon_3$.
	\end{enumerate}
	\end{theorem}
	
	
		\subsubsection*{Proof of \cref{prop.SSL}}   
	Part \ref{SSL1}.
	We sketch the arguments for the convergence of $A^{\cpp^k}_T$ along the sequence of dyadic partitions $\cpp^k= \{t^k_i:i=0,\ldots,2^k\}$ of $[0,T]$. The argument is a variation 
	of \cite{le2018stochastic} with the key feature of handling mixed moments, $m \le n$. (For a stream-lined presentation and further generalizations to Banach  spaces see \cite{LeSSL2}.) 
	With  $Z^k_i=\delta A_{t^{k+1}_{2i},t^{k+1}_{2i+1},t^{k+1}_{2i+2}}$, we can write
	\begin{align*}
		A^{\cpp^k}_T-A^{\cpp^{k+1}}_T=\sum_{i=0}^{2^{k}-1}Z^k_i 
		=\sum_{i=0}^{2^{k}-1}\E_{t^k_i}Z^k_i+\sum_{i=0}^{2^{k}-1}(Z^k_i-\E_{t^k_i}Z^k_i).
	\end{align*}
	The former sum is estimated using \eqref{con.dA1} and the latter one using \eqref{con.dA1}-\eqref{con.dA2} in combination with a conditional Burkholder--Davis--Gundy inequality.\footnote{To wit, if  $\{ M_k : k \ge 0\}$ is a discrete martingale relative to some filtration $\{ \mathcal{G}_k \}$, then a $\mathcal{G}^{\circ}$-conditional BDG inequality holds provided
	$\mathcal{G}^{\circ} \subset  \mathcal{G}_0 \}$, as seen by considering the martingales $M_k \mathbb{I}_G$, for $G \in \mathcal{G}^{\circ}$.} 	 This yields
	\begin{align*}
		&\|\|A^{\cpp^k}_T-A^{\cpp^{k+1}}_T|\cff_0\|_{m}\|_n\lesssim (\Gamma_1 T^{1+\varepsilon_1} +\Gamma_2 T^{\frac12+\varepsilon2}) 2^{-k(\varepsilon_1\wedge\varepsilon_2)},
		\\
		&\|\E_0(A^{\cpp^k}_T-A^{\cpp^{k+1}}_T)\|_n\lesssim \Gamma_1 T^{1+\varepsilon_1}  2^{-k\varepsilon_1}\,.
	\end{align*}
	This shows the existence of some limit $\A_T$ with $\A_T - A^{\cpp^k}_T \to 0$ in $L_m$. 
	We then write $\A_T-\A_0-A_{0,T}=\sum_{k=0}^\infty (A^{\cpp^{k+1}}_T-A^{\cpp^{k}}_T)$ and apply the above estimates to obtain \eqref{est.A1} and \eqref{est.A2} for $(s,t)=(0,T)$.
	A dyadic allocation argument, used in an essentially similar fashion as in \cite{le2018stochastic}, then provides the extension from dyadic to general partitions and yields (i) as stated. 
	
	The proof of Part \ref{SSL2} is given after Lemmas \ref{lem.AAonP}. \ref{lem.unifAA}, \ref{lem.AAonD}. We first give a corollary, useful for existence of weak RSDE solutions (cf. Lemma \ref{lem.lim}).
		

	\begin{corollary}\label{cor.SSLk}
		Let $\{ (A^k_{s,t})_{(s,t)\in \Delta} : k \in \mathbb{N} \}$ be a family of stochastic processes that satisfies the hypotheses of \cref{prop.SSL} (ii), with exponents and constants ($m,n,\Gamma_1,\Gamma_2,\Gamma_3,\varepsilon_1,\varepsilon_2,\varepsilon_3$) uniform in $k$,  and let $\A^k$ be the corresponding process. Suppose that for each $s\in[0,T]$, $\lim_{k}\|\sup_{t\in[s,T]}|A^k_{s,t}-A_{s,t}|\|_m=0$. Then $\lim_k\|\sup_{t\in[0,T]}|\A^k_t-\A_t|\|_m =0$.
	\end{corollary}
	\begin{proof}
		Let $\cpp$ be a partition of $[0,T]$. From \cref{prop.SSL}(ii), we have, for some $\varepsilon > 0$,
		\begin{align*}
			\sup_k \left \|\sup_{t\in[0,T]}\bigg|\A^k_t-\A_t-\sum_{[u,v]\in\cpp:u\le t}(A^k_{u,v\wedge t}-A_{u,v\wedge t})\bigg|\right\|_m\lesssim|\cpp|^{\varepsilon}
		\end{align*}
		for some $\varepsilon>0$.
		By assumption, we have
		\[
			\lim_{k}\left \|\sup_{t\in[0,T]}\bigg|\sum_{[u,v]\in\cpp:u\le t}(A^k_{u,v\wedge t}-A_{u,v\wedge t})\bigg|\right \|_m =0.
		\]
		By triangle inequality, we have
		\begin{align*}
			\Big \|\sup_{t\in[0,T]}\big|\A^k_t-\A_t\big|\Big \|_m\lesssim |\cpp|^\varepsilon+ \Big \|\sup_{t\in[0,T]}\Big|\sum_{[u,v]\in\cpp:u\le t}(A^k_{u,v\wedge t}-A_{u,v\wedge t})\Big|\Big \|_m
		\end{align*}
		From here, we send first $k\to\infty$ then $|\cpp|\to0$ to obtain the result.
	\end{proof}

	\subsubsection*{Preparatory lemmas}
        For the proof of \cref{prop.SSL}(ii), we prepare a few intermediate estimates. 
        All implicit constants in the following depend only on $m,\varepsilon_1,\varepsilon_2$ and $T$.
	\begin{lemma}\label{lem.AAonP} In the setting of \cref{prop.SSL}(i), we have $$\|\|\sup_{t\in \cpp}|A^{\cpp}_{t}-\A_t||\cff_0 \|_m\|_n\lesssim |\cpp|^{\varepsilon_1\wedge \varepsilon_2}(\Gamma_1+\Gamma_2).$$
	\end{lemma}
	\begin{proof}
		For  each $j\ge1$, we write
		\begin{align*}
			A^{\cpp}_{t_j}-\caa_{t_j}
			=\sum_{i\le j} Z_i
			=\sum_{i\le j}( Z_i-\E_{t_{i-1}}Z_i)+\sum_{i\le j}\E_{t_{i-1}}Z_i,
		\end{align*}
		where for each $i$, $Z_i=A_{t_{i-1},t_i}- \delta\caa_{t_{i-1},t_i}\in\cff_{t_i}$. Note that the former sum is a discrete martingale indexed by $j$. Applying the conditional BDG inequality and the Minkowski inequality (see \cite[Eq. (2.5)]{le2018stochastic}), we have
		\begin{align*}
			\|\|\sup_{j}|A^{\cpp}_{t_j}-\caa_{t_j}||\cff_0\|_m\|_n
			\lesssim \left(\sum_{i}\|\|Z_i|\cff_0\|_m\|_n^2\right)^{1/2}+\sum_{i}\|\|\E_{t_{i-1}}Z_i|\cff_0\|_m\|_n.
		\end{align*}
		Using \eqref{est.A1} and \eqref{est.A2}, we can estimate the series on the right-hand side above, which yields the stated estimate.
	\end{proof}
	\begin{lemma}\label{lem.unifAA} Under the assumptions of \cref{prop.SSL}(ii), for every $(s,t)\in \Delta$, we have
		\begin{align}\label{est.unif.AA}
			\Big\|\Big\|\sup_{r\in D(s,t) }|\delta\A_{s,r}-A_{s,r}|\Big|\cff_s\Big\|_m\Big\|_n\lesssim \Gamma_1(t-s)^{1+\varepsilon_1}+\Gamma_2(t-s)^{\frac12+\varepsilon_2}+\Gamma_3(t-s)^{\frac1m+\varepsilon_3} ,
		\end{align}
		where  $D(s,t)=\cup_k\cpp^k$, 
		 $\cpp^k$ the dyadic partitions of $[s,t$], with uniform mesh size $2^{-k}(t-s)$.

	\end{lemma}
	\begin{proof}
		Fix $(s,t)\in \Delta$. Let $\cpp^k=\{r^k_i\}_{i=0}^{2^k-1}$ be the dyadic partition of $[s,t]$ with uniform mesh size $2^{-k}(t-s)$. Define $\fl{r}_k=\sup_{r^k_i: r^k_i\le r}$,  $A^k_{s,r}=\sum_{i:r^k_i\le r}A_{r^k_{i-1},r^k_i}$, $u^k_i=(r^k_i+r^k_{i-1})/2$ and $Z^k_i=-\delta A_{r^k_{i-1},u^k_i,r^k_i}$.
		For each $r\in[s,t]$, each integers $k\ge h\ge0$, we write
		\begin{gather*}
			A^h_{s,\fl{r}_h}-A^{h-1}_{s,\fl{r}_{h-1}}=\sum_{i:r^{h-1}_i\le\fl{r}_{h-1}}Z^{h-1}_i+A_{\fl{r}_{h-1},\fl{r}_h},
		\end{gather*}
		so that
		\begin{align}\label{tmp.id.disAk}
			A^k_{s,\fl r_k}-A_{s,\fl r_k}=\sum_{h=0}^{k-1}\sum_{i:r^{h}_i\le\fl{r}_{h}}Z^{h}_i+\left(\sum_{h=0}^{k-1}A_{\fl{r}_{h},\fl{r}_{h+1}}-A_{s,\fl r_k}\right).
		\end{align}
		For the former sum, we further decompose
		\begin{align*}
			\sum_{h=0}^{k-1}\sum_{i:r^{h}_i\le\fl{r}_{h}}Z^{h}_i=\sum_{h=0}^{k-1} \left[I^{h}_1(\fl r_{h})+I^{h}_2(\fl r_{h})\right]
		\end{align*}
		where
		\begin{gather*}
			I^h_1(r)=\sum_{i:r^h_i\le r} \E_{r^h_{i-1}} Z^h_i
			\tand I^h_2(r)=\sum_{i:r^h_i\le r}( Z^h_i-\E_{r^h_{i-1}}Z^h_i).
		\end{gather*}
		Using triangle inequality, we have
		\begin{align*}
			\|\|\sup_{r\in[s,t]}|I^h_1(\fl r_h)||\cff_s\|_m\|_n\le\sum_{i:r^h_i\le t} \|\| \E_{r^h_{i-1}} Z^h_i|\cff_s\|_m\|_n\le \Gamma_1 (t-s)^{1+\varepsilon_1}2^{-h \varepsilon_1}.
		\end{align*}
		Note that for each $h$, $(I^h_2(u))_{u\in\cpp^h}$ is a discrete martingale. Applying the BDG inequality and the Minkowski inequality, we have for every $h<k$,
		\begin{align*}
			\|\|\sup_{r\in[s,t]}|I^h_2(\fl r_h)||\cff_s\|_m\|_n
			&\le \|\|\sup_{u\in\cpp^h}|I^h_2(u)||\cff_s\|_m\|_n
			\\&\lesssim \left(\sum_{i:r^h_i\le t}\|\|Z^h_i|\cff_s\|_m\|_n^2 \right)^{1/2}
			\lesssim \Gamma_2(t-s)^{\frac12+\varepsilon_2}2^{-h \varepsilon_2}.
		\end{align*}

 		For the later sum in \eqref{tmp.id.disAk}, we have
		\begin{align*}
			I^h_3(r):= |\sum_{h=0}^{k-1}A_{\fl r_h,\fl r_{h+1}}-A_{s,\fl r_k}|
			&=|\sum_{h=0}^{k-1}\delta A_{\fl r_h,\fl r_{h+1},\fl r_{k}}|
			\\&\le \sum_{h=0}^{k-1}\sup_{i=0,\ldots,2^h-1}\sup_{u\in[u^h_i,r^h_{i+1}]}|\delta A_{r^h_i,u^h_i,u}|.
		\end{align*}
		We note that
		\begin{align*}
			\E_s\sup_{i=0,\ldots,2^h-1}\sup_{u\in[u^h_i,r^h_{i+1}]}|\delta A_{r^h_i,u^h_i,u}|^m
			\le \sum_{i=0}^{2^h-1}\E_s\sup_{u\in[u^h_i,r^h_{i+1}]}|\delta A_{r^h_i,u^h_i,u}|^m.
		\end{align*}
		Hence using \eqref{con.dA3a} and the fact that $m\le n$, we obtain that
		\begin{align*}
			\bigg\|\Big\|\sup_{i}\sup_{u\in[u^h_i,r^h_{i+1}]}|\delta A_{r^h_i,u^h_i,u}|\Big|\cff_s\Big\|_m\bigg\|_n
			&\le \left(\sum_{i=0}^{2^h-1}\bigg\|\Big\|\sup_{u\in[u^h_i,r^h_{i+1}]}|\delta A_{r^h_i,u^h_i,u}|\Big|\cff_s\Big\|_m\bigg\|_n^m \right)^{\frac1m}
			\\&\lesssim \Gamma_3 2^{-h \varepsilon_3}(t-s)^{\frac1m+\varepsilon_3},
		\end{align*}
		which implies that
		\begin{align*}
			\|\|\sup\nolimits_rI^h_3(r)|\cff_s\|_m\|_n\lesssim \Gamma_3(t-s)^{\frac 1m+\varepsilon_3}.
		\end{align*}

		Applying the previous estimates altogether in \eqref{tmp.id.disAk}, we obtain that
		\begin{align*}
			\bigg\|\Big\|\sup_{r\in[s,t]}|A^k_{s,\fl r_k}-A_{s,\fl r_k}|\Big|\cff_s\Big\|_m\bigg\|_n
			\lesssim \Gamma_1 (t-s)^{1+\varepsilon_1}+\Gamma_2(t-s)^{\frac12+\varepsilon_2}
			+ \Gamma_3(t-s)^{\frac1m+\varepsilon_3}
		\end{align*}
		uniformly for every $k\ge1$.
		Hence, for every $k\le l$,
		\begin{align*}
			\bigg\|\Big\|\sup_{r\in\cpp^k}|A^l_{s,r}-A_{s,r}|\,\Big|\cff_s\Big\|_m\bigg\|_n
			\lesssim \Gamma_1 (t-s)^{1+\varepsilon_1}+\Gamma_2(t-s)^{\frac12+\varepsilon_2}
			+ \Gamma_3(t-s)^{\frac1m+\varepsilon_3}.
		\end{align*}
		Sending $l\to\infty$ then $k\to\infty$, using the fact that $\lim A^l_{s,r}=\delta \A_{s,r}$ in probability for each $s,r$, since we are in the setting of part (i),
		cf. Proof of \cref{prop.SSL} below, 	
		we obtain \eqref{est.unif.AA}.
	\end{proof}
	\begin{lemma}\label{lem.AAonD}
		Then under the assumptions of \cref{prop.SSL}(ii), we have
		\begin{align*}
			\Big\|\sup_{t\in D}|A^{\cpp^k}_t-\A_t|\Big\|_m\lesssim 2^{-k (\varepsilon_1\wedge \varepsilon_2 \wedge \varepsilon_3)} (\Gamma_1+\Gamma_2+ \Gamma_3),
		\end{align*}
		where  $D=\cup_k\cpp^k$, 
		$\cpp^k$ the dyadic partitions of $[0,T]$, with uniform mesh size $2^{-k}T$.
		\end{lemma}

	
	\begin{proof}
		For any $t$, define $\fl{t}_k=\sup\{r\in\cpp^k:r\le t\}$.
		For any $l\ge k$, we have
		\begin{align*}
			\sup_{t\in \cpp^l}|A^{\cpp^k}_t-\A_t|
			&\le \sup_{t\in \cpp^l}|A^{\cpp^k}_{\fl{t}_k}-\A_{\fl{t}_k}|+\sup_{t\in \cpp^l}| A_{\fl{t}_k,t}-\delta\A_{\fl{t}_k,t}|
			\\&= \sup_{t\in \cpp^k}|A^{\cpp^k}_t-\A_t|+\sup_{s\in\cpp^k}\sup_{t\in \cpp^l\cap[s,s+2^{-k}T]}| A_{s,t}-\delta\A_{s,t}|.
		\end{align*}
		By \cref{lem.AAonP}, we have $\|\sup_{t\in \cpp^k}|A^{\cpp^k}_t-\A_t|\|_m\lesssim 2^{-k (\varepsilon_1\wedge \varepsilon_2)}(\Gamma_1+\Gamma_2)$.
		For the second term, we put $\zeta_s=\sup_{t\in \cpp^l\cap[s,s+2^{-k}T]}| A_{s,t}-\delta\A_{s,t}|$ and use \cref{lem.unifAA} to obtain that
		\begin{align*}
			\E|\sup_{s\in\cpp^k}\zeta_s |^m\le\sum_{s\in\cpp^k} \E|\zeta_s|^m
			\lesssim  2^{-k\min\left\{(1+\varepsilon_1)m-1,(\frac12+\varepsilon_2)m-1,m \varepsilon_3\right\} }(\Gamma_1+\Gamma_2+\Gamma_3)^m.
		\end{align*}
		Since $m\ge2$, the above exponent of $2^{-k}$ is positive, hence we have $\|\sup_{s\in\cpp^k}\zeta_s \|_m\lesssim 2^{-k( \varepsilon_1\wedge \varepsilon_2\wedge \varepsilon_3)} (\Gamma_1+\Gamma_2+\Gamma_3)$.
		These estimates yield
		\begin{align*}
			\Big\|\sup_{t\in \cpp^l}|A^{\cpp^k}_t-\A_t|\Big\|_m\lesssim 2^{-k (\varepsilon_1\wedge \varepsilon_2\wedge \varepsilon_3)} (\Gamma_1+\Gamma_2+\Gamma_3).
		\end{align*}
		Since $l\ge k$ is arbitrary, this implies the result.
	\end{proof}

 	\subsubsection*{Proof of \cref{prop.SSL}} (Part \ref{SSL2}, locally uniform convergence) By Part \ref{SSL1} we have pointwise convergence,
	that is $A_t^{\mathcal{P}^k} \rightarrow \mathcal{A}_t$ in probability, $t \in
[0, T]$. By Lemma \ref{lem.AAonD} and the triangle inequality it is clear that
\[ \| \sup_{t \in D} | A_t^{\mathcal{P}^k} - A_t^{\mathcal{P}^l} | \|_m
   \rightarrow 0, \tmop{as} k, l \rightarrow \infty ; \]
by the assumed c{\`a}dl{\`a}g (continuity) assumption, one replaces $D$ by
$\bar{D} = [0, T]$. With a Cauchy argument we see that that there is a
c{\`a}dl{\`a}g (continuous) process $\tilde{\mathcal{A}} $ so that
\[ \| \sup_{t \in [0,T]} | A_t^{\mathcal{P}^k} - {\tilde{\mathcal{A}}_t} | \|_m
   \rightarrow 0, \tmop{as} k, l \rightarrow \infty . \]
Clearly then, $\mathcal{A}_t = \tilde{\mathcal{A}}_t$ a.s. and for each $t \in
[0, T]$, so that $\tilde{\mathcal{A}} $ is the desired c{\`a}dl{\`a}g
(continuous) version. 
	\hfill\qed

\section{Rough stochastic analysis}
\label{sec.RSA}
In the current section, we define and establish basic properties of the integration $\int Zd\X$ where $Z$ is an adapted process and $\X=(X,\XX)$ is an $\alpha$-H\"older rough path.
The frequently used class of \( X \)-controlled rough paths in rough-path theory (\cite{FH20}) turns out to be too restrictive to contain solutions to RSDEs (as introduced later in \eqref{eqn.srde}). This has led us to the concept of stochastic controlled rough paths (introduced in \cref{def.SCRP}) and rough stochastic integrations, which are described herein.
Throughout the section, $\W=(\Omega,\cgg,\P;\{\cff_t\})$ is a stochastic basis whose underlying probability space is complete. We assume, as usual, that $\cff_0$ contains $\P$-null sets, which ensures that modifications of adapted processes are still adapted.

\subsection{Stochastic controlled rough paths} 
\label{sub.regular_stochastic_controlled_rough_paths}
In the sequel we let $2\le m<\infty ,$ while $m\le n\le \infty$.
The parameters  $\alpha,\beta,\beta'\in(0,1]$ are subject to \( \alpha +\beta+\beta'>1\).
Unless stated otherwise,
\( I\subset [0,T] \) is an arbitrary compact interval and we let for convenience
\[
o=\min I.
\]

For any 2-parameter stochastic process \( A_{s,t}(\omega)\), we introduce the quantity
	\begin{equation}
	\label{shorthand_bigcdot}
	\E_\bigcdot A=
(s,t;\omega) \mapsto \E_s (A_{s,t} )(\omega)  ,
	\end{equation}
where we recall that \( \E_s=\E(\cdot|\mathcal F_s) \).
A progressively measurable stochastic process \( Z_s(\omega)\in W\) is called \textit{stochastically controlled} with respect to \( X \) if there exists another such \( Z_s'(\omega)\in \mathcal L(V,W) \) so that for every \( (s,t)\in \Delta(I) \), with probability one
\begin{equation}
\label{stochastic_controlled}
|\E_s\delta Z_{s,t}-Z_s'\delta X_{s,t}| \le C_{s,t}|t-s|^{\beta+\beta'}\,,
\end{equation}
where \( C_{s,t}=C_{s,t}(\omega)\) is a uniformly \( L_{n}\)-integrable, two-parameter family of random variables.
By a common abuse of language, we call \( Z' \) the (generalized) Gubinelli derivative of \( Z \) even though it is not unique in general.

It turns out that rough stochastic integrals can be defined for stochastically controlled processes that are subject to additional regularity assumptions in the spaces \( C^\kappa L_{m,n}\). One of these subclasses is that of stochastic controlled rough paths, as defined here. As will be seen in \cref{sec.rough_stochastic_differential_equations}, it contains solutions to RSDEs of the form \eqref{eqn.srde} for reasonable coefficients.
	\begin{definition}[Stochastic controlled rough paths]
	\label{def.SCRP}
		We say that $(Z,Z')$ is a stochastic controlled rough path of $(m,n)$-integrability and $(\beta,\beta')$-H\"older regularity with values in $W$ with respect to $\{\cff_t\}$
		if the following are satisfied
		\begin{enumerate}[label=(\alph*)]
		 	\item\label{good1} $Z\colon \Omega\times  I\to W$ and $Z'\colon\Omega \times I\to \mathcal L(V,W)$
		 	are $\{\mathcal F_t\}$-progressively measurable;
			\item\label{goodgood}
			$\delta Z$ belongs to $C^{\beta}_2 L_{m,n}( I,\W;\cll(V,W))$;
		 	\item \label{itm:remainder} \( Z \) is stochastically controlled with Gubinelli derivative \( Z' \). Said otherwise, putting
		 	\begin{equation*}
		 	\begin{aligned}
		 	R^Z_{s,t}
		 	&=\delta Z_{s,t}-Z'_s \delta X_{s,t}
		 	,\quad\text{for}\enskip  (s,t)\in \Delta ,
			\end{aligned}
		 	\end{equation*}
		 	we have that $\E_{\bigcdot} R^Z$ belongs to $C^{\beta+\beta'}_2L_{n}( I,\W;W) $;%
		 	\footnote{In that case \cref{rem:nothing_happens} implies that \( \|\EE R^Z\|_{\beta+\beta';n}=\|\EE R^Z\|_{\beta+\beta';m,n} \).}
			\item\label{good2}
			$Z'$ belongs to $C^{\beta'} L_{m,n}( I,\W;\cll(V,W))$
			and \(\sup_{t\in I}\|Z'_t\|_n<\infty\).
		\end{enumerate}
		The class of such processes is denoted by $\D_X^{\beta,\beta'}L_{m,n}( I,\W;W)$, or simply
	 $\D_X^{\beta,\beta'}L_{m,n}$ whenever clear from the context. Additionally, we write $\D_X^{2 \beta}L_{m,n}=\D_X^{\beta,\beta}L_{m,n}$.
	\end{definition}
	Being stochastically controlled is fundamentally a statement about the increments \( \delta Z \) rather than the path \( Z \) itself. No integrability assumption is required on the ground value \( Z_o\in L_0(\mathcal F_o) \), as long as property \eqref{stochastic_controlled} is not altered. 

	For a process $(Z,Z')$ in $\D_X^{\beta,\beta'}L_{m,n}$, we define the  seminorms
	\begin{equation}\label{def.bracketDL}
		\bk{(Z,Z')}_{X,\beta,\beta';m,n} =	
			\|\delta Z\|_{\beta;m,n}  
		+ \| \delta Z'\|_{\beta';m,n} + \|\E_{\bigcdot} R^Z\|_{\beta+\beta';n},
\end{equation}
and 
	\begin{equation}\label{def.normDbar}
		\|(Z,Z')\|_{X,\beta,\beta';m,n}
		= \bk{(Z,Z')}_{X,\beta,\beta';m,n}  + \| Z' \|_{\infty ; n} .
	\end{equation}
This is not a norm as it assigns zero to any $(Z,Z') \equiv (z,0)$, any $z\in W$, accordingly does not induce a metric but only a pseudometric on $\D_X^{\beta,\beta'}L_{m,n}$. Although 
$(\bar Z,\bar Z') \in  \D^{\beta,\beta'}_{\bar X}L_{m,n}$, for $\bar X \ne X$, lives in a different space, we can define a meaningful distance\footnote{Because of Proposition \ref{prop.compose_stab} we will use this distance only in case $m=n$.}
%
	\begin{equation}
	\label{def.scrp.bracket}
	\begin{aligned}
		\bk{Z,Z';\bar Z,\bar Z'} _{X,\bar X;\beta,\beta';m,n}
		&= \|\delta Z- \delta \bar Z\|_{\beta;m,n}
	+\|\delta Z'- \delta \bar Z'\|_{\beta';m,n}+\|\E_\bigcdot R^{Z}-\E_\bigcdot \bar R^{\bar Z}\|_{\beta+\beta';n},
	\end{aligned}
	\end{equation}
	where $\bar R^{\bar Z}_{s,t}:=\delta\bar Z_{s,t}-\bar Z'_s\delta\bar X_{s,t}$, and then
	\begin{equation}
	\label{def.scrp.metric}
		\|Z,Z';\bar Z,\bar Z'\| _{X,\bar X;\beta,\beta';m,n} = \bk{Z,Z';\bar Z,\bar Z'}_{X,\bar X;\beta,\beta';m,n} +  \| Z' - \bar Z' \|_{\infty ; n}.
	\end{equation}
	As previously, the subscript $(X,\bar X;\beta,\beta';m,n)$ will be notationally condensed in case of $X=\bar X, \beta = \beta'$ or $m=n$, in which case we simply write $X,\beta$ or $m$, respecively, rather than repeating the concerned variables. The subadditivity property for \eqref{def.normDbar}, with induced triangle inequality, then extends to 
\begin{multline}
\label{ineq.super_additive}
\|Z+S,Z'+S';\bar Z+S,\bar Z'+S'\|_{X,\bar X;\beta, \beta';m,n}
\\\le\|Z,Z';\bar Z,\bar Z'\|_{X,\bar X;\beta, \beta';m,n}+\|S,S';\bar S,\bar S'\|_{X,\bar X;\beta, \beta';m,n},
\end{multline}
for any \( (Z,Z'), (S,S')\in \D_{X}^{\beta,\beta'}L_{m,n}\) and \((\bar Z,\bar Z'), (\bar S,\bar S')\in  \D^{\beta,\beta'}_{\bar X}L_{m,n} \), and 
\begin{equation}
\label{triangle_inequality}
\|Z,Z';\bar Z,\bar Z'\|_{X,\bar X;\beta, \beta';m,n}
\le \|Z,Z';\tilde Z,\tilde Z'\|_{X,\tilde X;\beta, \beta';m,n}
+\|\tilde Z,\tilde Z';\bar Z,\bar Z'\|_{\tilde X,\bar X;\beta,\beta';m,n},
\end{equation}
for any \( \tilde X \in C^\alpha(V)\) and  \( (\tilde Z,\tilde Z')\in \D_{\tilde X}^{\beta,\beta'}L_{m,n}\). Similar inequalities hold for the brackets \eqref{def.scrp.bracket}.

\begin{remark}
\label{rem:martingales}
Martingales ``have'' zero generalized Gubinelli derivative,
in the sense that letting \( M_s'=0 \) yields \( \E_sR^M_{s,t}=\E_s\delta M_{s,t}\equiv 0\), which is in \(C^{\beta'}_2L_n \).
Additionally \( (M,0) \) forms a stochastic controlled rough path in \( \D_X^{\beta,\beta'}L_{m,n} \) if and only if \( t\mapsto \delta M_{0,t} \) lies in \( C^\beta L_{m,n} \) (with identical semi-norms). This is clear from \cref{def.SCRP}.
\end{remark}

\subsubsection*{Doob--Meyer decomposition}
Other examples of stochastic controlled rough paths would be \textit{$X$-controlled} rough paths taking values in the Banach space \( \abx=L_m \), as defined in \cite{FH20}  (denoted by \(\mathscr D_X^{\beta,\beta'}(\abx)\) therein). 
Recall that for two continuous paths \( Y\colon  I\to \abx \) and \( Y'\colon  I\to \mathcal L(V,\abx) \), the pair \( (Y,Y') \) is called \( X \)-controlled whenever
\begin{equation}
\label{strong_cancellation}
|\delta Y_{s,t}-Y'_s\delta X_{s,t}|_{\abx}\lesssim (t-s)^{\beta+\beta'}\,.
\end{equation}
Now, because of the contraction property for conditional expectation, we have that any progressively measurable pair $(Y,Y')$, which is also in \( \mathscr D_X^{\beta,\beta'}(L_m)\), in fact yields an element in \(\D_X^{\beta,\beta'} L_{m,m}\).%

Under mild regularity and integrability assumptions, it is clear from \cref{rem:martingales} and linearity  that the sum of a martingale and an \( X \)-controlled process 
will yield a stochastic controlled rough path as in \cref{def.SCRP}.
Similar to Doob--Meyer, we show a converse statement. It asserts that a stochastic controlled rough path can be written as a martingale (endowed with zero Gubinelli derivative as above) plus an additional path subject to \eqref{strong_cancellation}.
This is formulated in the following result.
	\begin{theorem}[Doob--Meyer decomposition]\label{thm.mj}
		Suppose that $(Z,Z')$ is a stochastic controlled rough path with respect to $\{\cff_t\}$ in $\D_X^{\beta,\beta'}L_{m,n}$ with $(\alpha\wedge\beta)+\beta'>\frac12$, $m\in[2,\infty)$ and $n\in[m,\infty]$.
		Then, there are uniquely characterized processes $M,Y$ such that
		\begin{enumerate}[label=(\roman*)]
			\item\label{mj1} $Z_t=M_t+Y_t$ a.s.\ for every $t\in I$;
			\item $M$ is an $\{\cff_t\}$-martingale, $M_o=0$;
			\item\label{mj3} $Y$ is $\{\cff_t\}$-adapted and \( X \)-controlled in the sense that
			\begin{equation}\label{est.jz}
				\|\|\delta Y_{s,t}-Z'_s \delta X_{s,t}|\cff_s\|_m\|_n\lesssim(\|\delta  Z'\|_{\beta';m,n}|\delta X|_\alpha+\|\E_\bigcdot R^Z\|_{\beta+\beta';n}) |t-s|^{(\alpha\wedge \beta)+\beta'}
			\end{equation}
			for every $(s,t)\in \Delta$.
		\end{enumerate}
	  Letting \( Y'=Z' \), estimate \eqref{est.jz} implies moreover that $(Y,Y')$ belongs to $\D_X^{ (\alpha\wedge \beta),\beta'}L_{m,n}$ (in particular, $M$ belongs to $C^{\alpha\wedge \beta}L_{m,n}$).
	\end{theorem}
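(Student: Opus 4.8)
The plan is to obtain the decomposition by applying the stochastic sewing lemma \cref{prop.SSL} to the germ given by the $\cff_s$-conditional expectation of the increments of $Z$. Concretely, I would set
\[
A_{s,t}:=\E_s[\delta Z_{s,t}]=Z'_s\delta X_{s,t}+\E_s R^Z_{s,t},\qquad (s,t)\in\Delta,
\]
which is $\cff_t$-measurable with $A_{s,s}=0$. Using the elementary rough-path identity $\delta R^Z_{s,u,t}=\delta Z'_{s,u}\,\delta X_{u,t}$, a short computation gives
\[
\delta A_{s,u,t}=-(\delta Z'_{s,u}-\E_s\delta Z'_{s,u})\,\delta X_{u,t}+(\E_s R^Z_{u,t}-\E_u R^Z_{u,t}),
\]
from which $\E_s\delta A_{s,u,t}=0$; hence \eqref{con.dA1} holds with $\Gamma_1=0$. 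For \eqref{con.dA2} I would estimate the first term by $|\delta X|_\alpha|t-s|^\alpha\,\|\|\delta Z'_{s,u}|\cff_s\|_m\|_n$, absorbing the $\E_s\delta Z'_{s,u}$ contribution via the conditional Jensen inequality \eqref{ineq.mn}, and the second term by $2\|\E_{\bigcdot} R^Z\|_{\beta+\beta';n}|t-s|^{\beta+\beta'}$. Since the exponents $\alpha+\beta'$ and $\beta+\beta'$ both dominate $(\alpha\wedge\beta)+\beta'>\tfrac12$ on the bounded interval $[0,T]$, this yields \eqref{con.dA2} with $\varepsilon_2=(\alpha\wedge\beta)+\beta'-\tfrac12$ and $\Gamma_2\lesssim_T\|\delta Z'\|_{\beta';m,n}|\delta X|_\alpha+\|\E_{\bigcdot} R^Z\|_{\beta+\beta';n}$.

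I would then invoke \cref{prop.SSL} to produce an $\{\cff_t\}$-adapted process $\A$ with $\A_0=0$, $\A_t-A_{0,t}\in L_m$, $\E_s\delta\A_{s,t}=A_{s,t}$ (the second sewing bound, since $\Gamma_1=0$) and $\|\|\delta\A_{s,t}-A_{s,t}|\cff_s\|_m\|_n\lesssim\Gamma_2(t-s)^{(\alpha\wedge\beta)+\beta'}$, and set $J:=Z_0+\A$, $M:=Z-J=\delta Z_{0,\bigcdot}-\A$. Then $Z=M+J$ holds, $J_0=Z_0$ and $M_0=0$; $M_t\in L_m$ because $\delta Z_{0,t}\in L_m$ and $A_{0,t}=\E_0\delta Z_{0,t}\in L_m$; and $\E_s\delta M_{s,t}=\E_s\delta Z_{s,t}-\E_s\delta\A_{s,t}=A_{s,t}-A_{s,t}=0$, so $M$ is an $L_m$-martingale. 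For the estimate \eqref{est.jz} I would write $\delta J_{s,t}-Z'_s\delta X_{s,t}=(\delta\A_{s,t}-A_{s,t})+\E_s R^Z_{s,t}$ and bound the two summands by $\Gamma_2(t-s)^{(\alpha\wedge\beta)+\beta'}$ and $\|\E_{\bigcdot} R^Z\|_{\beta+\beta';n}(t-s)^{\beta+\beta'}$ respectively, using $\|\E_{\bigcdot}\delta Z'\|_{\beta';n}\le\|\delta Z'\|_{\beta';m,n}$ (again \eqref{ineq.mn}, available because $(Z,Z')$ lies in the stronger class $\DL{\beta,\beta'}{m}{n}$) to obtain the stated right-hand side.

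For the uniqueness assertion (with $\beta=\alpha$, $\beta'=\alpha'$, so that $\alpha+\alpha'>\tfrac12$), I would take two pairs $(M^1,J^1)$, $(M^2,J^2)$ both satisfying \ref{mj1}--\ref{mj3}, set $N:=M^1-M^2=J^2-J^1$, and note that $N$ is a martingale with $N_0=0$; subtracting the two instances of \eqref{est.jz} gives $\|\delta N_{s,t}\|_2\le\|\|\delta N_{s,t}|\cff_s\|_m\|_n\lesssim|t-s|^{\alpha+\alpha'}$ with $2(\alpha+\alpha')>1$. Applying \cref{lem.uniq} with $w(s,t)=t-s$ and $\eta(s,t)\propto|t-s|^{2(\alpha+\alpha')-1}$ (the hypothesis on $\|\E_sN_{s,t}\|_2$ being trivial since $N$ is a martingale) shows that the Riemann sums $\sum_{[u,v]\in\cpp}\delta N_{u,v}$ converge to $0$ in $L_2$; but these telescope to $N_t$, so $N_t=0$ a.s.\ for every $t$, i.e.\ $M^1=M^2$ and $J^1=J^2$. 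The main obstacle is Step~1: guessing the correct germ and verifying the cancellation $\E_s\delta A_{s,u,t}=0$, together with the H\"older bookkeeping that makes $(\alpha\wedge\beta)+\beta'>\tfrac12$ precisely the condition under which \cref{prop.SSL} applies; once the germ is in hand, the remaining steps are routine applications of the sewing and uniqueness lemmas established above.
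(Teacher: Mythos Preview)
Your proof is correct. The key computation is in fact the same as the paper's: your $\delta A_{s,u,t}$ equals precisely the quantity $(\E_s-\E_u)\delta Z_{u,t}$ that the paper estimates, and the resulting bound $\lesssim(\|\delta Z'\|_{\beta';m,n}|\delta X|_\alpha+\|\E_{\bigcdot}R^Z\|_{\beta+\beta';n})|t-s|^{(\alpha\wedge\beta)+\beta'}$ is identical.

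The packaging, however, is different. The paper sets $A_{s,t}=\delta Z_{s,t}$ and invokes an external Doob--Meyer-type result, \cite[Theorem~3.3]{LeSSL2}, which produces the martingale/drift pair $(\cmm,\cjj)$ directly from the control on $(\E_s-\E_u)A_{u,t}$; the estimate \eqref{est.jz} then follows from \cite[Eqn.~(3.9)]{LeSSL2}. You instead choose the \emph{predictable} germ $A_{s,t}=\E_s[\delta Z_{s,t}]$, sew it with \cref{prop.SSL} to build $J$ directly, and define $M:=Z-J$; the martingale property of $M$ is then an immediate consequence of the cancellation $\E_s\delta A_{s,u,t}=0$ (so $\Gamma_1=0$), and uniqueness follows from \cref{lem.uniq}. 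Your route is therefore entirely self-contained within the paper's own toolkit, avoiding the external citation, at the cost of having to spot the right germ and verify the cancellation by hand. Both arguments yield the same constants and exponents.
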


	\begin{proof} This can be seen as application of the Doob-Meyer type decomposition Theorem 2.2. in \cite{le2018stochastic}, but in the context of mixed moments, $m \le n$, also revisited in \cite{LeSSL2}. Using Theorem 3.3. therein, consider 
		 $A_{s,t}=\delta Z_{s,t}$, which is integrable. Since $\delta A\equiv0$, conditions \eqref{con.dA1} and \eqref{con.dA2} of \cref{prop.SSL} are trivially satisfied, and $\caa_t=Z_t-Z_o$ for each $t$. From the definition of the spaces $\D_X^{\beta,\beta'}L_{m,n}$ and the fact that $Z'_s$ is $\cff_s$-measurable, we have
		\begin{align*}
		 (\E_s-\E_u)\delta Z_{u,t}
		 &=(\E_s-\E_u)[Z'_{u}\delta X_{u,t}]+(\E_s-\E_u)R^Z_{u,t}
		 \\&=(\E_s-\E_u)[\delta Z'_{s,u}]\delta X_{u,t}+(\E_s-\E_u)R^Z_{u,t}\,.
		\end{align*}
		Hence, we infer that
		\[
		\|\|(\E_s-\E_u )\delta Z_{u,t}|\cff_s\|_{m}\|_n\lesssim(\|\delta  Z'\|_{\beta';m,n}|\delta X|_\alpha+\|\E_\bigcdot R^Z\|_{\beta+\beta';n}) |t-s|^{(\alpha\wedge \beta)+\beta'}\,.
		\]
		But since $(\alpha\wedge \beta)+\beta'>\frac12$, the conditions of \cite[Theorem 3.3]{LeSSL2} are met. Hence, $\caa=\cmm+\cjj$ where $\cmm,\cjj$ satisfy the conclusions of  that theorem. 
		We set $M=\cmm$ and $Y=\cjj+Z_o$ so that $Z=M+Y$. We see from \cite[Eqn. (3.9)]{LeSSL2} that
		\[
			\|\| \delta Y_{s,t}-\E_s \delta Z_{s,t}|\cff_s\|_m\|_n\lesssim(\|\delta  Z'\|_{\beta';m,n}|\delta X|_\alpha+\|\E_\bigcdot R^Z\|_{\beta+\beta';n})|t-s|^{(\alpha\wedge \beta)+\beta'}\,.
		\]
		Next, writing $\E_s \delta Z_{s,t}=Z'_s \delta X_{s,t}+\E_sR^Z_{s,t}$ and applying the triangle inequality, we obtain
		\begin{align*}
			\|\|\delta Y_{s,t}-Z'_s \delta X_{s,t}|\cff_s\|_m\|_n\le\|\E_sR^Z_{s,t}\|_n+\|\| \delta Y_{s,t}-\E_s \delta Z_{s,t}|\cff_s\|_m\|_n\,.
		\end{align*}
		Combined with the former estimate, we obtain \eqref{est.jz}.
		Uniqueness of $(M,Y)$ given $(Z,Z')$ follows from \cite[Theorem 3.3(vi)]{LeSSL2}.
	\end{proof}

\subsection{Rough stochastic integrals}\label{sub.RSI}


We will now tackle the heart of the matter by defining the rough stochastic integral of
a progressively measurable process $Z\colon \Omega\times I\to W$ against a rough path \( \X=(X,\XX)\in \CC^\alpha(I;V) \), assuming the former is stochastically controlled with respect to \( X \). For that purpose though, extra regularity assumptions are required and we shall see in particular that these are fulfilled whenever the corresponding pair $\left( Z,Z^{\prime }\right)$ forms a stochastic controlled rough path in $\D_X^{\beta,\beta'}L_{m,n}( I,\W;\cll(V,W))$. Its rough stochastic integral $\int_o^t Zd\X$ is then well-defined as the limit in probability of the Riemann sums
\begin{equation}\label{riemann}
\sum_{[u,v]\in\cpp:o\le u < t}\left(Z_u \delta X_{u,v\wedge t}+Z'_u\XX_{u,v\wedge t}\right)
\end{equation}
as the mesh-size of $\cpp$ goes to $0$, for each $t\in I$
(here $\cpp$ is any partition of $I$).
Here, in writing $Z'_u\XX_{u,v}$, we have used the isomorphism $\cll(V,\cll(V,W))\simeq\cll(V\otimes V,W)$ (recall that \( V,W \) are finite-dimensional).
The resulting integration theory is self-consistent in the sense that $(\int Zd\X,Z)$ shares all of the properties of stochastic controlled rough paths, except for the fact that its second component, namely $Z$, is not necessarily bounded uniformly in $L_n$.

Although \eqref{riemann} has the same form as the defining Riemann sums for rough integrals (\cite{FH20}), the convergence of \eqref{riemann} only takes place in probability. This is due to the fact that the class of stochastic controlled rough paths contains not only controlled rough paths, but also nontrivial martingales (for which \eqref{riemann} fails to converge a.s.).
This alludes that the usual sewing lemma is not applicable. Instead, we rely on the stochastic sewing lemma, \cref{prop.SSL}, to obtain such convergence.

We now state our main result on rough stochastic integration in which the reader may assume $\beta=\beta'=\alpha$ at the first reading.
For any rough path $\X \in \mathscr{C}^\alpha$; $\beta,\beta'\in(0,1)$; $m,n\in[1,\infty]$;
and integrable stochastic processes $(S_{s,t}), (A_{s,t})$
we introduce the quantities%
\begin{align}
\label{nota.Gamma_1}
\Gamma_1^{\beta, \beta';m,n}(\X,S,A;I):=\rho_{\alpha,\alpha\wedge\beta;I}(\X) \big( \|\E_{\bigcdot} A\|_{\alpha\wedge\beta+\beta';n;I} + \|\E_{\bigcdot} S\|_{\beta';n;I}\big)\, ,
\\
\label{nota.Gamma_2}
\Gamma_2^{\beta,\beta';m,n}(\X,S,A;I):=\rho_{\alpha,\alpha\wedge\beta;I}(\X) \big(\|A\|_{\alpha\wedge\beta;m,n;I} + \| S\|_{\infty;m,n;I}\big)\,.
\end{align}
	\begin{theorem}[Rough stochastic integral]\label{thm.Hroughint}
		Let $\alpha\in(\frac14,\frac12]$, $\beta,\beta'\in(0,1]$, $\alpha+\beta>\frac12$, $\alpha+(\alpha\wedge \beta)+\beta'>1$, $m\in[2,\infty)$, $n\in[m,\infty]$ and $\X=(X,\XX) \in \mathscr{C}^\alpha([0,T];V)$.
Suppose that $Z,Z'$ are $\{\mathcal F_t\}$-progressively measurable processes  such that\footnote{Think of \eqref{con.roughintGamma} as the weakest possible condition to apply stochastic sewing, satisfied in particular by stochastic controlled rough paths, in sense of Definition \ref{def.SCRP}, introduced because of their good behaviour under composition, discussed in Section \ref{sub.stability}.}
\begin{align}\label{con.roughintGamma}
 \max_{i=1,2}\Gamma_i^{\beta,\beta';m,n}(\X,\delta Z',R^Z;I) <\infty\,.
\end{align}
	Then $A_{s,t} := Z_s \delta X_{s,t} + Z'_s\XX_{s,t}$
	defines a two-parameter stochastic process which satisfies the hypotheses of the stochastic sewing lemma. We define the rough stochastic integral $\int_o^\cdot Z d\X$ by the continuous process supplied by \cref{prop.SSL}(ii).
		In particular, $\int_o^\cdot Zd\X$ is the continuous process which corresponds to the limit in probability of \eqref{riemann} uniformly in time.
Moreover, the corresponding integral remainder
\( J_{s,t}=\int_s^tZd\X -Z_s\delta X_{s,t}-Z'_s\XX_{s,t}\) depends on \( (\X,Z,Z') \) in a Lipschitz fashion. More precisely, let
 \( (\bar \X ;\bar Z,\bar Z')\) denote another tuple subject to \( \max_{i=1,2}\Gamma_i^{\beta,\beta';m.n}(\bar\X,\delta\bar Z',\bar R^{\bar Z};I)<\infty \).
Then, defining \( \bar J\) accordingly, we have the estimates 
\begin{multline}
\label{est.Hcondz}
\|\EE
[J -\bar J ]\|_{\alpha+\alpha\wedge\beta+\beta';n;I}
\lesssim
\Gamma_1(\X-\bar\X,\delta\bar Z',\bar R^{\bar Z};I) + \Gamma_1(\X,\delta Z'-\delta \bar Z', R^Z-\bar R^{\bar Z};I)\,,
\end{multline}
\begin{equation}
\label{est.Hzm}
\begin{aligned}
\|J-\bar J\|_{\alpha+\alpha\wedge \beta;m,n;I}
&\lesssim
(|I|^{\beta'}\Gamma_1 + \Gamma_2)(\X-\bar\X,\delta\bar Z',\bar R^{\bar Z};I)
\\&\quad
+(|I|^{\beta'}\Gamma_1+\Gamma_2)(\X,\delta Z'-\delta \bar Z',R^Z-\bar R^{\bar Z};I)\,,
\end{aligned}
\end{equation}
and similarly 
\begin{multline}
\label{est.Hsup}
\Big\|\big\|\sup_{r\in I}|J_{o,r}-\bar J_{o,r}|\big|\cff_o\big\|_m\Big\|_n
\lesssim
\Big[(\Gamma_1+\Gamma_2)(\X-\bar\X,\delta\bar Z', \bar R^{\bar Z};I)
\\
+ (\Gamma_1+\Gamma_2)(\X,\delta Z'-\delta \bar Z',R^Z-\bar R^{\bar Z};I)\Big]
|I|^{\alpha+\alpha\wedge\beta-\frac12\vee(1-\beta')}.
\end{multline}
In the above inequalities, we have abreviated $\Gamma_i=\Gamma_i^{\beta,\beta';m,n}$ for $i=1,2$ and all hidden constants  depend on $\alpha,\beta,\beta',m$ and \( T \), but are independent of \( \X,\bar\X,Z,Z',\bar Z,\bar Z' \).
\end{theorem}

\begin{proof}
		Using the Chen's relation \eqref{chen}, we easily arrive at the identity
		\begin{equation}
		\label{delta_AA}
			-\delta A_{s,u,t}
			= R^Z_{s,u}\delta X_{u,t}
			+ \delta  Z'_{s,u}\XX_{u,t}
		\end{equation}
		for every \( (s,u,t)\in \Deltatwo(I) \). This implies that
		\begin{align*}
			\|\E_s\delta A_{s,u,t}\|_n
			&\le (t-s)^{\alpha+\alpha\wedge \beta+\beta'}\Big(|\delta X|_{\alpha;I}\|\E_{\bigcdot} R^Z\|_{\alpha\wedge\beta+\beta';n;I}
			+|\XX|_{\alpha+\alpha\wedge\beta;I}\|\E_{\bigcdot} \delta Z'\|_{\beta';n;I}\Big),
		\\&\lesssim (t-s)^{\alpha+\alpha\wedge\beta+\beta'} \Gamma_1^{\beta,\beta';m,n}(\X,\delta Z',R^Z;I) 
		\end{align*}
		and similarly
		\begin{align*}
			\bigg\| \bigg\|\sup_{\tau\in[u,t]}|\delta A_{s,u,\tau}|\bigg|\cff_s\bigg\|_m\bigg\|_n
			\le (t-s)^{\alpha+\alpha\wedge\beta} \Gamma_2^{\beta,\beta';m,n}(\X,\delta Z',R^Z;I),
		\end{align*}
		showing the conditions \eqref{con.dA1}, \eqref{con.dA2} and \eqref{con.dA3a} of \cref{prop.SSL} with \( \varepsilon_1=\alpha+\alpha\wedge\beta+\beta'-1>0 \), \( \varepsilon_2=\alpha+\alpha\wedge\beta-1/2>0 \) and \( \varepsilon_3=\alpha+\alpha\wedge\beta -\frac1m>0\).
That $t\mapsto A_{s,t}$ is a.s.\ continuous for each $s$ is evident. 
It follows that 
the process $\int_o^{\cdot} Zd\X:=\mathcal A$ is well-defined.
The convergence of \eqref{riemann} also follows from \cref{prop.SSL}. 

Next, we show \eqref{est.Hcondz}--\eqref{est.Hsup}.
When \( \bar Z=0 \) and \( \bar\X=\X \), the claimed estimates follow directly by \eqref{est.A1}, \eqref{est.A2}, \eqref{est.unifAPA} (with the trivial partition) and the previous bounds.
The general case follows similar arguments based this time on the identity
\begin{multline*}
		-\delta(A-\bar A )_{s,u,t}
			= \bar R^{\bar Z}_{s,u}(\delta X-\delta \bar X)_{u,t}
			+ \delta \bar Z'_{s,u}(\XX-\bar \XX)_{u,t}
			\\
			+(R^{Z}-\bar R^{\bar Z})_{s,u}\delta X_{u,t}
			+ (\delta Z' - \delta \bar Z')_{s,u}\XX_{u,t} \,,
		\end{multline*}
		where \( \bar A_{s,t}=\bar Z_s\delta \bar X_{s,t} + \bar Z_s'\mathbb{\bar X}_{s,t} \).
We leave the details of these bounds to the reader.
	\end{proof}

We now state an important corollary concerning integrability of rough stochastic controlled paths as per \cref{def.SCRP}, as well as the continuity of the integration map in that context.
\begin{corollary}[Continuity of integration map]
\label{cor.integral}
Fix \(m,n, \alpha,\beta,\beta' \) as in \cref{thm.Hroughint}.\footnote{Both cases, $\alpha \le \beta$ and $ \beta \le \alpha$ are interesting. For instance, rough Brownian sample paths is $(1/2-\varepsilon)$-H\"older, whereas Brownian motion as moment space valued path has H\"older exponent $1/2$. On the other hand, in our later Picard argument for RSDEs it will be important to take $\beta < \alpha$.} 
Let \( (Z,Z'), (\bar Z,\bar Z') \) be a stochastic controlled rough paths respectively in \( \D_{X}^{\beta,\beta'}L_{m,n} \) and \( \D_{\bar X}^{\beta,\beta'}L_{m,n} \).

\noindent  (i)   For $J,\bar J$ as in \cref{thm.Hroughint}, we have
\begin{multline}\label{est.RSIE}
	\|\EE(J-\bar J)\|_{\alpha+\alpha\wedge\beta+\beta';n;I}+\|J-\bar J\|_{\alpha+\alpha\wedge\beta;m,n;I}+ \Big\|\big\|\sup_{r\in I}|J_{o,r}-\bar J_{o,r}|\big|\cff_o\big\|_m\Big\|_n
	\\
		\lesssim  \|Z,Z';\bar Z,\bar Z'\|_{X,\bar X;\alpha\wedge\beta,\beta';m,n;I}+ \rho_{\alpha,\alpha\wedge\beta}(\X,\bar\X).
\end{multline}
(ii) Assuming that \( \|Z\|_{\infty;n}<\infty \) and \( \|\bar Z\|_{\infty;n}<\infty \), we have that
\begin{multline}
\label{est.integral}
\normlr{\int_o^{\cdot} Zd\X,Z;\int_o^{\cdot} \bar Z d\bar\X,\bar Z}_{X,\bar X;\alpha,\alpha\wedge\beta;m,n}
+\left\|\bigg\|\sup_{t\in I}\bigg|\int_o^t Zd\Z-\int_o^t \bar Z d\bar\X\bigg|\ \bigg|\cff_o\bigg\|_{m}\right\|_n
\\\lesssim
C'\rho_{\alpha,\alpha\wedge\beta}(\X,\bar\X)
+(1+C)(\|Z-\bar Z\|_{\infty;n}+
 \|Z,Z';\bar Z,\bar Z'\|_{X,\bar X;\alpha\wedge\beta,\beta';m,n})
\end{multline}
where \( C = \rho_\alpha(\X) (1+\rho_\alpha(\X))\) and 
\( C'= \sup_{t}\|\bar Z_t\|_n +\|(\bar Z,\bar Z')\|_{\bar X;\beta,\beta';m,n} \).

%
\noindent (iii) For fixed  \( \X\in \mathscr C^{\alpha}(V) \), the integration map
\[
\begin{aligned}
\D_X^{\beta,\beta'}L_{m,n}\cap \{(Z,Z')\colon \|Z\|_{\infty;n}<\infty 
\}
&\longrightarrow \D_{X}^{\alpha,\alpha\wedge\beta}L_{m,n}
\\
(Z,Z')&\longmapsto \left (\int_o^{\cdot} Zd\X,Z\right )
\end{aligned}
\]
is well-defined, linear and bounded in the sense that\footnote{Actually,
$\bk{
  (\int_o^{\cdot} Zd\X,Z)
  }_{X;\alpha,\alpha\wedge\beta;m,n}
\lesssim \| \delta Z \|_{\alpha\wedge\beta;m,n} + 
 C \{ \| Z \|_{\infty;n}
 + \|(Z,Z')\|_{X;\alpha\wedge\beta,\beta';m,n}\}$, which upon adding $\| Z \|_{\infty;n}$ leads to \eqref{est.integral_lin}. This form of the estimate
 is more aligned with estimates for deterministic rough integrals, where one often uses $C \lesssim T^\delta$, assuming $\X$ to be $\alpha+\delta$-H\"older.}

\begin{equation}
\label{est.integral_lin}
\left \|(
\int_o^{\cdot} Zd\X,Z)\right \|_{X;\alpha,\alpha\wedge\beta;m,n}
\lesssim (1+C)(\|Z\|_{\infty;n} + \|(Z,Z')\|_{X;\alpha\wedge\beta,\beta';m,n})\,.
\end{equation}
\end{corollary}
\begin{proof}
	We  observe that \( \|\EE R^Z\|_{\alpha\wedge\beta+\beta';n}\lesssim \|\EE R^Z\|_{\beta+\beta';n} \) while 
\( \|\EE \delta Z'\|_{\beta';m,n} \le \|\delta Z'\|_{\beta';m,n}\). 
This yields that 
\begin{equation}\label{tmp.g1}
	\Gamma_1^{\beta,\beta';m,n}(\X,\delta Z',R^Z;I)\lesssim \rho_{\alpha,\alpha\wedge\beta}(\X) \bk{(Z,Z')}_{X;\alpha\wedge\beta,\beta';m,n}.
\end{equation}
Furthermore, using triangle inequality $\|R^Z\|_{\alpha\wedge\beta;m,n}\le \|\delta Z\|_{\alpha\wedge\beta;m,n} + |\delta X|_{\alpha\wedge\beta}\|Z'\|_{\infty;n}$ and the trivial bound $\|\delta Z'\|_{\infty;m,n}\le 2\|Z'\|_{\infty;n}$, we also have that 
\begin{equation}\label{tmp.g2}
	\Gamma_2^{\beta,\beta';m,n}(\X,\delta Z',R^Z;I)\lesssim (1+|\delta X|_{\alpha\wedge\beta})\rho_{\alpha,\alpha\wedge\beta}(\X)(\bk{(Z,Z')}_{X;\alpha\wedge\beta,\beta';m,n}+\|Z'\|_{\infty;n}).
\end{equation}
In particular, \cref{thm.Hroughint} asserts that \( (Z,Z') \) has a well-defined rough stochastic integral.
Similarly, we have 
\begin{align}
	\max_{i=1,2}\Gamma_i^{\beta,\beta';m,n}(\X-\bar\X,\delta \bar Z',\bar R^{\bar Y};I)
	&\lesssim \norm{(Z,Z')}_{X;\alpha\wedge\beta,\beta';m,n} \rho_{\alpha,\alpha\wedge\beta}(\X,\bar\X) , \label{equ.GG1}
\\	\max_{i=1,2}\Gamma_i^{\beta,\beta';m,n}(\X,\delta Z'-\delta \bar Z',R^{Y}-\bar R^{\bar Y};I)
	&\lesssim C
	\norm{Z,Z';\bar Z,\bar Z'}_{X,\bar X;\alpha\wedge\beta,\beta';m,n}, \label{equ.GG2}
\end{align}
where $C$ is as in the statement.

Part (i) is a  direct consequence of \eqref{est.Hcondz}, \eqref{est.Hzm} and \eqref{est.Hsup} with $\Gamma_1, \Gamma_2$ bounded as in  \eqref{equ.GG1}, \eqref{equ.GG2}.

(ii) Define \( (Y,Y')=(\int Zd\X,Z) \) and similarly for \( (\bar Y,\bar Y') \).
In the notations of \cref{thm.Hroughint}, we have
\( \|\E_s(R^{Y}-\bar R^{\bar Y})_{s,t}\|_n
		\le \|\bar Z'_s(\XX_{s,t}-\bar\XX_{s,t})\|_n+\|(Z'_s-\bar Z'_s)\XX_{s,t}\|_n +\|\E_s (J-\bar J)_{s,t}\|_{n} \).
Applying \eqref{est.RSIE} and the preious estimate, we obtain that
		\[\begin{aligned}
			\|\EE(R^{Y}-\bar R^{\bar Y})\|_{\alpha+\alpha\wedge \beta ;n}
		&\lesssim
C'\rho_{\alpha,\alpha\wedge\beta}(\X, \bar\X)
+ C\norm{Z,Z';\bar Z,\bar Z'}_{X,\bar X;\alpha\wedge\beta,\beta';m,n}.
		\end{aligned}\]
We proceed similarly for the difference between increments, starting this time from
\[
\begin{aligned}
\|(\delta Y-\delta \bar Y)_{s,t}\|_{m,n}&
\le \|\bar Z_s(\delta X_{s,t} - \delta\bar X_{s,t})+\bar Z'_s(\XX_{s,t}-\bar\XX_{s,t}) \|_{m,n}
\\&	\quad +\|(Z_s-\bar Z_s) \delta  X_{s,t}+(Z'_s-\bar Z'_s)\XX_{s,t}\|_{m,n}
+\|J_{s,t}-\bar J_{s,t}\|_{m,n}.
\end{aligned}
\]
Using \eqref{est.RSIE} to treat the last term above,  it follows that
\begin{multline*}
\|\delta Y-\delta \bar Y\|_{\alpha;m,n}
\lesssim 
C'\rho_{\alpha,\alpha\wedge\beta}(\X, \bar\X)
\\
+C\bigg(\|Z-\bar Z\|_{\infty;n}
+ \norm{Z,Z';\bar Z,\bar Z'}_{X,\bar X;\alpha\wedge\beta,\beta';m,n}
\bigg).
\end{multline*}
After summing up these contributions, we obtain the desired bound for 
\[\normlr{\int_o^{\cdot} Zd\X,Z;\int_o^{\cdot} \bar Z d\bar\X,\bar Z}_{X,\bar X;\alpha,\alpha\wedge\beta;m,n}\] as in \eqref{est.integral}. The estimate for the second term with $\sup_{t\in I}$ in \eqref{est.integral} follows from \eqref{est.RSIE} and similar arguments. 

(iii) 
Taking \( \X=\bar\X \) and \( (\bar Z,\bar Z')=(0,0) \) in \eqref{est.integral} entails \eqref{est.integral_lin}. It is straightforward to check that the pair \( (\int _o^{\cdot}Z d\X,Z) \) satisfies all properties of stochastic controlled rough paths in \cref{def.SCRP}.
\end{proof}
\begin{remark}
\label{rem.integral_gene}
The averaged form in which the Gubinelli derivative appears in \eqref{nota.Gamma_1} shows that integration makes sense for a larger class of ``extended'' stochastic controlled rough path, as introduced later in \cref{sec:ito}. Since that class is not stable by composition (contrary to stochastic controlled rough paths, at least when \( n=\infty \), see \cref{prop.compose_stab} below), it does not play a prominent role as far as we are concerned with RSDEs of the form \eqref{eqn.srde}. That is why we postpone its introduction until later sections.
\end{remark}

\subsection{Stochastic controlled vector fields} 
\label{sub.stability}
	Unless stated otherwise, in the sequel we work with a H\"older path $X$ in $C^\alpha(I;V)$ for some $\alpha\in(0,1]$ and a compact interval \( I\subset [0,T] \).
	Recall that $V,W$ are finite-dimensional Banach spaces, we also set here $\bar W := \cll(V,W)$, another (finite-dimensional) Banach space.

	Herein we introduce a class of random, time-dependent and progressively measurable vector fields\footnote{Our terminology comes from viewing $f$ as collection of $d$ (time-dependent, random) vector fields when \(\mathrm{dim}V=d \). }
	\begin{equation}
	\label{fa_view}
	\begin{aligned}
		f&\colon\Omega\times I\longrightarrow \mathcal X\hookrightarrow\mathcal C_b(W,\bar W)
		\\
		f'&\colon \Omega\times I\longrightarrow \mathcal Y\hookrightarrow\mathcal C_b\big(W,\mathcal L(V,\bar W)\big)
	\end{aligned}
\end{equation}
 	for well-chosen Banach spaces of functions \( \mathcal X,\mathcal Y \), and where strong%
	\footnote{This particular detail matters in the discussion since none of the natural target spaces \( \Cb\) or \(\mathcal C_b^\gamma \) for \(\gamma>0\) is separable (see however \cref{rem:separability}).}
	\( \mathcal G /\Bor(\abx)\)-measurability (resp.\ strong \( \mathcal G /\Bor(\mathcal Y)\)-measurability) is assumed, see \cref{sec:framework}.
	Our main purpose here is to investigate a natural composition operation of that class with stochastic controlled rough paths and show that it yields a similar object, subject to explicit local-Lipschitz estimates.

	As is well-known in the absence of time and sample parameters, the pair \( (f^{\circ}(Y),Df^\circ(Y)Y') \) forms an \( X \)-controlled rough path if \( (Y,Y') \) shares that property (with common H\"older regularity exponent \( \alpha \), say) provided that
\[
 f^{\circ}\in \C^\gamma_b(W;\bar W),\quad \quad \text{for some }\gamma>\frac1\alpha\,.
\]
Given the functional analytic viewpoint laid out in \eqref{fa_view},
it is then tempting to let \( \abx=\mathcal C_b^{\gamma}(W,\bar W) \), \( \mathcal Y=\mathcal C_b^{\gamma}(W,\mathcal L(V,\bar W))\simeq \mathcal L\big(V,\abx\big) \) and simply define stochastic controlled vector fields as stochastic controlled rough paths with values in \( \mathcal X \) (note that \cref{def.SCRP} extends trivially to infinite-dimensional state spaces).
Although doing the job, this description would be too demanding regularity-wise as it fails to capture possible tradeoffs between space and time regularities at the level of the vector fields. A much better definition, which we employ in the rest of the paper, is the following.
	\begin{definition}[Stochastic controlled vector fields]\label{def.scvec}
		Let $\beta, \beta' \in (0,1]$ and $\gamma\in(1,\infty)$, $m\in[2,\infty)$ and $n\in[m,\infty]$ be some fixed parameters.
		We call $(f,f')$ {\em stochastic controlled vector field on \( W \)}
		of $(m,n)$-integrability and $(\gamma,\beta,\beta')$ regularity with respect to $\{\cff_t\}$
		if the following conditions are satisfied.
		\begin{enumerate}[label=(\alph*)]
			\item\label{scvec.f}
			The pair
			\[
			(f,f')\colon\Omega \times  I  \to \C^\gamma_{b}(W, \bar W) \times \C^{\gamma-1}_{b} (W,\mathcal L(V,\bar W))
			\]
			is  progressively measurable in the strong sense and uniformly $n$-integrable i.e.
			\[
			\sup_{s\in I}\||f_s|_{\gamma}\|_n +  \sup_{s\in I}\||f'_s|_{\gamma-1}\|_n < \infty\,.
			\]

			\item\label{scvec.brakets}
			Letting
			\begin{align}\label{def.bk}
				\bk{Z}_{\kappa;m,n}:=\sup_{(s,t)\in \Delta(I):s\neq t}\frac{\Big\|\big\|\sup_{x\in W}|Z_{s,t}(x)|\,\big|\,\cff_s \big\|_m\Big\|_n}{(t-s)^\kappa}\,,
			\end{align}
			the quantities
			$\bk{\delta f}_{\beta;m,n}$, $\bk{\delta f'}_{\beta';m,n}$, \( \bk{\delta Df}_{\beta';m,n} \) are finite.%

			\item\label{scvec.remainder} The map $(s,t)\mapsto \E_s R^f_{s,t}=\E_sf_t-f_s-f'_s \delta X_{s,t}$ belongs to \( C_2^{\beta+\beta'}L_{n}(\Cb) \)%
			\footnote{Similar to \cref{def.SCRP}, it is equivalent to say that \( \EE R^f \) belongs to \( C_2^{\beta+\beta'}L_{m,n}(\Cb) \)
			},
			namely
			\[
			\bk{\E_\bigcdot R^f}_{\beta+\beta';n}
			=\sup_{(s,t)\in \Delta(I):s\neq t}\frac{\|\sup_{y\in W}|\E_sR^{f}_{s,t}(y)|\|_n}{(t-s)^{\beta+\beta'}}
			<\infty\,.
			\]
		\end{enumerate}
		The class of such vector fields is denoted by $\D_X^{\beta,\beta'}L_{m,n}\C^\gamma_b( I,\W;W)$, or simply
	 $\D_X^{\beta,\beta'}L_{m,n}\C^\gamma_b$ whenever the tuple $( I,\W;W)$ is clear from the context. 
	 Additionally, we write $\D_X^{2 \beta}L_{m,n}\C^\gamma_b=\D_X^{\beta,\beta}L_{m,n}\C^\gamma_b$.

		We call $(f,f')$ $L_{m,\infty}$-integrable$, (\gamma,\alpha,\alpha')$-space-time-regular stochastic controlled vector fields if
		$(f,f')\in \D^{\alpha,\alpha'}_XL_{m,\infty}\C^\gamma_b$, and $(Df,Df') \in \D^{\alpha,\alpha'}_XL_{m,\infty}\C^{\gamma-1}_b$. (Write $2\alpha$ instead of $\alpha,\alpha'$ in case they are equal.)
	\end{definition}
	For stochastic controlled vector fields as above, we introduce moreover the quantities
	\begin{equation}
		\label{def.norms_scvf}
		\left \{\begin{aligned}
			[(f,f')]_{\gamma;n}&:=\sup_{s\in I}\left(\|[f_s]_\gamma\|_n+\||f'_s|_{\gamma-1}\|_n\right) ,
			\\
			\|(f,f')\|_{\gamma;n}&:=[(f,f')]_{\gamma;n}    +  \sup_{s\in I} \|  |f_s|_\infty  \|_n ,
			\\
			\bk{(f,f')}_{X;\beta,\beta';m,n}
			&:=\bk{\delta f}_{\beta;m,n}+\bk{\delta Df}_{\beta';m,n}+\bk{\delta f'}_{\beta';m,n}+\bk{\E_\bigcdot R^f}_{\beta+\beta';n},
		\end{aligned}
	\right .
	\end{equation}
	which is abbreviated as $\bk{(f,f')}_{X;\beta,\beta';m}$ if $m=n$, as $\bk{(f,f')}_{X;2\beta;m,n}$ if \( \beta=\beta' \) and as $\bk{(f,f')}_{X;2\beta;m}$ when both conditions are met.

Similarly, if $(\bar f,\bar f')\in \D^{\beta,\beta'}_{\bar X}L_{m,\infty}\C^\gamma_b$ for another such \(\bar X\in C^\alpha(V) \), we define\footnote{One immediately defines a distance
$\bk{ -  ; - }_{X,\bar X;\beta, \beta';m,n}$ relative to mixed moments, however only $m=n$ will turn out to be relevant.}
 	\begin{equation}
	\label{def.bk_metric}
	\begin{aligned}
		\bk{f,f';\bar f,\bar f'}_{X,\bar X;\beta, \beta';m}&=\bk{\delta f-\delta \bar f}_{\beta;m}+\bk{\delta f'-\delta \bar f'}_{\beta';m}+\bk{\delta Df-\delta D\bar f}_{\beta';m}
		\\&\quad+\bk{\E_\bigcdot R^f-\E_\bigcdot \bar R^{\bar f}}_{\beta+\beta';m}.
	\end{aligned}
	\end{equation}

	\begin{remark}
		\label{rem:separability}
		By nature, the Lipschitz spaces $\C^{\gamma}_b (W)$ are non-separable (similar to the space of continuous bounded function on the real line), so that ``strong'' measurability assumptions are in order (cf. Section \ref{sec:ss}). Non-separability of such H\"older spaces is also a well-know feature of (H\"older) rough path - and model spaces (in regularity structures), typically with $W$ replaced by some interval of finite-dimensional torus, respectively. Following \cite{FrizVictoirNoteOnGRP,hairer2015large} one can usually work with separable subspace obtained by the closure of smooth (rough) paths and models, respectively. This simplification is not available to us, since a compact state-space for the solution process of RSDEs would entail a significant loss of generality. (The situation is even worse in \cite{friz2025McKean} where we encounter s.c.v.f.s on infinite-dimensional spaces, auxiliary moments spaces from a Lions lifting construction.)
   \end{remark}

The concept introduced in \cref{def.scvec} seems new even when the underlying pair \( (f,f') \) is deterministic (to our best knowledge) for an application. 
It is also worth noticing that the condition \( (f,f')\in \D_X^{\beta,\beta'}L_{m,n}\Cb^{\gamma} \) reduces to ``\( f\in \Cb^{\gamma} \)'' when there is no time nor sample parameter.
In the next example, we give a natural recipe to build genuinely random elements.

	\begin{example}
		\label{ex.v0}
Let \( (f,f')\in \D_X^{\beta,\beta'}L_{m,n}\mathcal C_b^{\gamma}([0,T],\W;W)\) for $\beta,\beta',\gamma,m,n$ as in \cref{def.scvec}.
Suppose that \( W=W_1\times W_2 \) and take $(Y,Y')$ a stochastic controlled rough path in \( \D^{\beta,\beta'}_XL_{m,\infty}([0,T],\W;W_2) \).
We can construct another stochastic controlled vector field on \( W_1 \) through the formula
		\begin{equation}
		\label{formula_compose}
		\left \{\begin{aligned}
		&g_t(\cdot)=f_t(\cdot ,Y_t)
		\\
		&g'_t(\cdot)=D_2 f_t(\cdot,Y_t)Y'_t + f_t'(\cdot, Y_t)\,,
		\end{aligned}\right .
		\end{equation}
where \( D_2 \) is the derivative with respect to the argument in \( W_2 \).
It can be observed that \( (g,g') \) belongs to \( \D_X^{\beta,\min\{(\gamma-2)\beta,\beta'\}}L_{m,n}\C^{\gamma}_b([0,T],\W;W_1) \)
(see \cite[Section 5]{friz2025McKean} for details).
\end{example}
The following lemma is useful to obtain estimates for the composition of a stochastic controlled vector field with a stochastic controlled rough path.
\begin{lemma}\label{lem:substitute}
Let $\mathcal{F}\subset \mathcal G$ be a sub-$\sigma$-field. Let $E$ be a Banach space and  $f:E\times\Omega\to\R$ be a jointly measurable bounded function. Then,  for every strongly measurable random variable $X:(\Omega,\cff)\to E$,  $f(X(\cdot),\cdot):(\Omega,\cgg)\to \R$ is measurable and we have
\begin{align}\label{est.conditionalfx}
	|\E[f(X(\cdot),\cdot)|\cff]|(\omega)\le \sup_{x\in E}|\E[f(x,\cdot)|\cff]|(\omega) \quad \text{for a.s. } \omega\in \Omega.
\end{align}
\end{lemma}
\begin{proof}
	Because $X$ is strongly measurable, $X=\lim_nX_n$ a.s. for some sequence of random variables $X_n$ such that each $X_n$ has finitely many values. 
	We can write $X_n=\sum_{i=1}^{k_n} x_i \1_{A_i}$ for some finite integer $k_n$, $x_i\in E$ and disjoint partition $\{A_i\}\subset\cff$ of $\Omega$. 
	Then, we can write
	\begin{align*}
		f(X_n(\omega),\omega)=\sum_{i=1}^{k_n} f(x_i,\omega)\1_{A_i}(\omega),
	\end{align*}
	which is measurable. By continuity of $f$, we have $f(X_n(\cdot),\cdot)\to f(X(\cdot),\cdot)$ a.s. as $n\to\infty$. This shows that $f(X(\cdot),\cdot)$ is  measurable.
	Furthermore, taking conditional expectation with respect to $\cff$, we have
	\begin{align*}
		|\E[f(X_n(\cdot),\cdot)\cff](\omega)|\le \sum_{i=1}^{k_n} \1_{A_i}(\omega)\sup_{x\in E} |\E[f(x,\cdot)|\cff](\omega)|=\sup_{x\in E} |\E[f(x,\cdot)|\cff](\omega)|.
	\end{align*}
	As \( n\to\infty \), applying the dominated convergence theorem, we obtain \eqref{est.conditionalfx}.
\end{proof}

		\begin{lemma}\label{lem.composecvec}
			Let $\beta,\beta'\in(0,1]$, $\gamma\in(2,3]$, and $m\in[2,\infty)$ and $n\in[m,\infty]$.
			Let $(f,f')$ be a stochastic controlled vector field in $ \D^{\beta,\beta'}_XL_{m,\infty} \C^{\gamma-1}_b$ and $(Y,Y')$ be a stochastic controlled rough path in $ \D^{\beta,\beta'}_XL_{m,n}$. Define $\beta''=\min\{(\gamma-2)\beta,\beta'\}$ and  $(Z,Z')=(f(Y),Df(Y)Y'+f'(Y))$. Then $(Z,Z')$ is a stochastic controlled rough path in $ \D^{\beta,\beta''}_XL_{m,\frac n{\gamma-1}}$ with
			\begin{align}\label{est.ZinD}
				\|(Z,Z')\|_{X;\beta,\beta'';m,\frac n{\gamma-1}}\lesssim ([(f,f')]_{\gamma-1;\infty}+\bk{(f,f')}_{X;\beta,\beta';m,\infty})( 1+ \|(Y,Y')\|_{X;\beta,\beta';m,n}^{\gamma-1}),
			\end{align}
			for an implicit constant depending only on $|I|$.
		\end{lemma}
		\begin{proof}
			That $Z_t,Z'_t$ are well-defined random variables for each $t$ follows from \cref{lem:substitute}. 
To show the result, setting $\|Df_s\|_\infty=\|\sup_x |Df_s(x)|\|_\infty$, we are going to establish the following estimates for each \( (s,t)\in \Delta (I) \):
			\begin{equation}\label{est.cfy}
				\|\|\delta Z_{s,t}|\cff_s\|_m\|_n\le(\bk{\delta f}_{\beta;m,n}+ \|Df_s\|_\infty\|\delta Y\|_{\beta;m,n})|t-s|^\beta\,,
			\end{equation}
			\begin{multline}\label{est.cERfY}
				\|\E_s R^{Z}_{s,t}\|_{\frac n{\gamma-1}}\le \|[Df_s]_{\gamma-2}\|_\infty \|\delta Y\|_{\beta;m,n} ^{\gamma-1}|t-s|^{(\gamma-1)\beta}
				\\+(\bk{\delta Df}_{\beta';m,\infty}\|\delta Y\|_{\beta;m,n}+ \|Df_s\|_\infty \|\E_{\bigcdot} R^Y\|_{\beta+\beta';n}+\llbracket \E_\bigcdot R^f\rrbracket_{\beta+\beta';\frac n{\gamma-1}})|t-s|^{\beta+ \beta'}\,,
			\end{multline}
			and
			\begin{equation}\label{est.cddf}
			\begin{aligned}
				&\|\|\delta Z'_{s,t}|\cff_s\|_m\|_{\frac n{\gamma-1}}
				\\&\le
				 (\|[Df_s]_{\gamma-2}\|_\infty\|\delta Y\|^{\gamma-2}_{\beta;m,n}\sup_{r}\|Y'_r\|_{n}+\||f'_s|_{\gamma-2} \|_\infty\|\delta Y\|_{\beta;m,n}^{\gamma-2} ) |t-s|^{(\gamma-2)\beta}
				\\&\quad+(\|Df_t\|_\infty \|\delta Y'\|_{\beta';m,n}+\bk{\delta Df}_{\beta';m,\frac n{\gamma-2}}\sup_r\|Y'_r\|_n+\bk{\delta f'}_{\beta';m,\frac n{\gamma-1}})|t-s|^{\beta'}\,.
			\end{aligned}
			\end{equation}
Since we have $\|Df_t(Y_t)Y'_t\|_n\le\|Df_t\|_\infty\|Y'_t\|_n$ and $\|f'_t(Y_t)\|_n\le\||f'_t|_\infty\|_\infty$, it is obvious on the other hand that $Z'$ is uniformly \( L_n \)-integrable 
and $\norm{Z'}_{\infty;n}\le [(f,f')]_{\gamma-1;\infty}(1+\norm{(Y,Y')}_{X;\beta,\beta';m,n})$. Thus, estimates \eqref{est.cfy}-\eqref{est.cddf} will be sufficient to show that \( (Z,Z')\in \D_X^{\beta,\beta''}L_{m,\frac{n}{\gamma-1}} \), as claimed.\smallskip

			Now, the first of these inequalities is trivial, since by triangle inequality
			\begin{align*}
				|f_t(Y_t)-f_s(Y_s)|&\le|f_t(Y_t)-f_s(Y_t)|+|f_s(Y_t)-f_s(Y_s)|
				\\&\le |\delta f_{s,t}|_\infty+|Df_s|_\infty|\delta Y_{s,t}|\,,
			\end{align*}
			leading to \eqref{est.cfy}. To treat $R^Z$, we write
			\begin{align}
				R^Z_{s,t}&=f_s(Y_t)-f_s(Y_s)-Df_s(Y_s)Y'_s \delta X_{s,t}
				\nonumber\\&\quad+f_t(Y_s)-f_s(Y_s)-f'_s(Y_s)\delta X_{s,t}
				\nonumber\\&\quad+ f_{t}(Y_t)- f_{t}(Y_s)-f_s(Y_t)+f_s(Y_s)
				\nonumber\\&= R^{f_s(Y)}_{s,t}+R^f_{s,t}(Y_s)+\left(\delta f_{s,t}(Y_t)-\delta f_{s,t}(Y_s)\right) .
				\label{tmp.357}
			\end{align}
			By the fundamental theorem of calculus,
			\begin{equation*}
			 	R^{f_s(Y)}_{s,t}=\left(\int_0^1\left[Df_s(Y_s+\theta \delta Y_{s,t})-Df_s(Y_s)\right]d \theta\right) \delta Y_{s,t}+Df_s(Y_s)R^Y_{s,t}\,,
			\end{equation*}
			which yields 
			\begin{align}\label{tmp.Efs}
				|\E_sR^{f_s(Y)}_{s,t}|\le [Df_s]_{\gamma-2} |\E_s|\delta Y_{s,t}|^{\gamma-1}|+|Df_s|_\infty|\E_sR^Y_{s,t}|\,.
			\end{align}
			Applying the $L_{n/(\gamma-1)}$-norm and triangle inequality gives
			\begin{align*}
				\|\E_sR^{f_s(Y)}_{s,t}\|_{\frac n{\gamma-1}}
				&\le \|[Df_s]_{\gamma-2}\|_\infty \|\|\delta Y_{s,t}|\cff_s\|_{\gamma-1}\|_{n}^{\gamma-1}+\|Df_s\|_\infty\|\E_sR^Y_{s,t}\|_{\frac n{\gamma-1}}
				\\
				&\le \|[Df_s]_{\gamma-2}\|_\infty \|\|\delta Y_{s,t}|\cff_s\|_{m}\|_{n}^{\gamma-1}+\|Df_s\|_\infty\|\E_sR^Y_{s,t}\|_{n}.
			\end{align*}
			From here, we obtain
			\begin{align}
				\|\E_s R^{f_s(Y)}_{s,t}\|_{\frac n{\gamma-1}}\le \|[Df_s]_{\gamma-2}\|_\infty \|\delta Y\|_{\beta;m,n} ^{\gamma-1}|t-s|^{(\gamma-1)\beta}
				+\|Df_s\|_\infty \|\E_{\bigcdot} R^Y\|_{\beta+\beta';n}|t-s|^{\beta+ \beta'}\,.
			\end{align}
			Using \eqref{est.conditionalfx},
			the second term in \eqref{tmp.357} is easily estimated by 
			\[
				\|\E_sR^f_{s,t}(Y_s)\|_{\frac n{\gamma-1}}\le\||\E_sR^f_{s,t}|_{\infty}\|_{\frac n{\gamma-1}}\le \bk{\E_\bigcdot R^f}_{\beta+\beta';\frac n{\gamma-1}}|t-s|^{\beta+\beta'}.
			\]

	For the last term in \eqref{tmp.357}, we use the Lipschitz estimate $|\delta f_{s,t}(Y_t)- \delta f_{s,t}(Y_s)|\le |\delta Df_{s,t}|_\infty|\delta Y_{s,t}|$
			and H\"older inequality to obtain that
			\begin{align*}
				\|\E_s(\delta f_{s,t}(Y_t)-\delta f_{s,t}(Y_s))\|_{\frac n{\gamma-1}}
				&\le \|\E_s(|\delta Df_{s,t}|_\infty|\delta Y_{s,t}|)\|_{\frac n{\gamma-1}}
				\\&\le \|\||\delta Df_{s,t}|_\infty|\cff_s\|_{m} \|_\infty \|\|\delta Y_{s,t}|\cff_s\|_m\|_{\frac n{\gamma-1}}
				\\&\le \bk{\delta Df}_{\beta';m,\infty}\|\delta Y\|_{\beta;m,\frac n{\gamma-1}}(t-s)^{\beta+\beta'}.
			\end{align*}
			Putting these estimates in \eqref{tmp.357}, we obtain \eqref{est.cERfY}.

			Next, from the identity
			\begin{align*}
				Df_t(Y_t)Y'_t-Df_s(Y_s)Y'_s&=[Df_s(Y_t)-Df_s(Y_s)]Y'_s
				\\&\quad+[Df_t(Y_t)-Df_s(Y_t)]Y'_s +Df_t(Y_t)\delta Y'_{s,t}
			\end{align*}
			we deduce that
			\begin{align*}
				\left \|Df(Y_t)Y'_t-Df(Y_s)Y'_s|\cff_s\right \|_m
				&\le\|[Df_s]_{\gamma-2}\|_\infty |Y'_s|\big\|\delta Y_{s,t}\big|\cff_s\big\|_{m(\gamma-2)}^{\gamma-2}
				\\&\quad+\||\delta Df_{s,t}|_\infty|\cff_s\|_m|Y'_s| +\|Df_t\|_\infty\|\delta Y'_{s,t}|\cff_s\|_m\,.
			\end{align*}
			To treat the first two terms on the above right-hand side, we apply $L_{n/(\gamma-1)}$-norm and use H\"older inequalities
			\[
				\|A B^{\gamma-2}\|_{\frac n{\gamma-1}}\le\|A\|_n\|B\|_n^{\gamma-2},
				\quad\|A B\|_{\frac n{\gamma-1}}\le\|A\|_{\frac n{\gamma-2}}\|B\|_n.
			\]
			This yields
			\begin{multline*}
				\left\|\|Df_t(Y_t)Y'_t-Df_s(Y_s)Y'_s|\cff_s\|_m\right\|_{\frac n{\gamma-1}}
				\\\le\|[Df_s]_{\gamma-2}\|_\infty \sup_r\|Y'_r\|_n\|\delta Y\|_{\beta;m(\gamma-2),n}^{\gamma-2}|t-s|^{(\gamma-2)\beta}
				\\+(\bk{\delta Df}_{\beta';m,\frac n{\gamma-2}}\sup_r\|Y'_r\|_n+\|Df_t\|_\infty\|\delta Y'\|_{\beta';m,\frac n{\gamma-1}})|t-s|^{\beta'}\,.
			\end{multline*}
			Similarly
			\begin{align*}
				|f'_t(Y_t)-f'_s(Y_s)|&\le |(f'_t-f'_s)(Y_t)|+|f'_s(Y_t)-f'_s(Y_s)|
				\\&\le |\delta f'_{s,t}|_\infty+[f'_s]_{\gamma-2}|\delta Y_{s,t}|^{\gamma-2}
			\end{align*}
			and hence,
			\begin{multline}\label{tmp.080725}
				\|\|f'_t(Y_t)-f'_s(Y_s)|\cff_s\|_m\|_{\frac n{\gamma-1}}\le \bk{\delta f'}_{\beta';m,\frac n{\gamma-1}}|t-s|^{\beta'}
				\\+\left\||f'_s|_{\gamma-2}\right\|_\infty\|\delta Y\|^{\gamma-2}_{\beta;m(\gamma-2),n}|t-s|^{(\gamma-2)\beta}.
			\end{multline}
			We arrive at \eqref{est.cddf} after observing that $\|\delta Y\|_{\beta;m(\gamma-2),n}\le\|\delta Y\|_{\beta;m,n}$ and $\|\delta Y'\|_{\beta';m,\frac n{\gamma-1}}\le \|\delta Y'\|_{\beta';m,n}$.
		\end{proof}

		\begin{remark}
			\label{rem:loss}
			Unless $n=\infty$ or $Df \equiv 0$, the estimate \eqref{tmp.Efs}, and more precisely the term $\E_s|\delta Y_{s,t}|^{\gamma-1}$ therein, inevitably causes a loss of integrability from $L_{m,n}$ to $L_{m,\frac n{\gamma-1}}$ in the composition map $(Y,Y')\mapsto (f(Y),Df(Y)Y'+f'(Y))$.
		\end{remark}

	We now discuss in more detail the stability of stochastic controlled rough paths under compositions, so as to obtain local-Lipschitz estimates. Let $X$ and $\bar X$ be two $\alpha$-H\"older paths, \(\alpha\in(\frac 13,\frac12] \).
	\begin{proposition}[Stability of composition]
		\label{prop.compose_stab}
		Let $m\in[2,\infty)$; $\gamma\in(2,3]$; $\alpha\in(\frac13,\frac12]$; $\alpha',\alpha'',\beta,\beta'\in(0,1]$  be fixed numbers. Let $X$ and $\bar X$ be two $\alpha$-H\"older paths.
		Let $(Y,Y')$ and $(\bar Y,\bar Y')$ be two elements in $ \D^{\beta,\beta'}_{X}L_{m,\infty}$ and $ \D^{\beta,\beta'}_{\bar X}L_{m,\infty}$ respectively. 
		Assume that
		\[
		\|(Y,Y')\|_{X;\beta,\beta';m,\infty}\vee\|(\bar Y,\bar Y')\|_{\bar X;\beta,\beta';m,\infty}\le M<\infty.
		\]
    Let $\kappa\in(0, \min\{\alpha,\alpha',\beta\}]$ and $\kappa'\in(0,\min\{\kappa,\alpha',\alpha'',(\gamma-2)\beta,\beta'\}]$.
		Let $(f,f')$, $(\bar f,\bar f')$ be controlled vector fields in $ \D^{\alpha,\alpha'}_XL_{m,\infty} \C^\gamma_b$ and
		$ \D^{\kappa,\kappa'}_{\bar X}L_{m,\infty} \C^{\gamma-1}_b$ respectively.
		Assume that $(Df,Df')$ belongs to  $ \D^{\alpha',\alpha''}_XL_{m,\infty} \C^{\gamma-1}_b$.
		Define
		\[
			(Z,Z')=(f(Y),Df(Y)Y'+f'(Y))
		\] and similarly for $(\bar Z,\bar Z')$.

		Then, recalling notations \eqref{def.scrp.metric} and \eqref{def.bk_metric}, we have the estimate
		\begin{multline}\label{est.zz}
			\norm{Z-\bar Z}_{\infty;m}+ \|Z,Z';\bar Z,\bar Z'\|_{X,\bar X;\kappa, \kappa';m}
			\lesssim
			\||Y_0-\bar Y_0|\wedge 1\|_m+\|Y,Y';\bar Y,\bar Y'\|_{X,\bar X; \kappa,\kappa';m}
			\\+\bk{f,f';\bar f,\bar f'}_{X,\bar X;\kappa,\kappa';m}+ \|(f-\bar f,f'-\bar f')\|_{\gamma-2;m}\,,
		\end{multline}
for an implicit constant which depends on \( M,T, \alpha,\alpha',\alpha'',\beta,\beta',\kappa,\kappa' \), $\bk{(f,f')}_{X;\alpha,\alpha';m,\infty}$, $\|(f,f')\|_{\gamma;\infty}$ and $\bk{(Df,Df')}_{X;\alpha',\alpha'';m,\infty}$.
	\end{proposition}
	\begin{proof}
		Despite its length, the proof is elementary.
		We put $\tilde Y=Y-\bar Y$, $\tilde Z=Z-\bar Z$, $\tilde f=f-\bar f$ and similarly for $\tilde Y', \tilde Z', \tilde f'$. \smallskip

		\textit{Step 1.}	We show that
		\begin{align}
			\label{est.cdz}
			&\| Z-  \bar Z\|_{\kappa;m}\lesssim\bk{\delta\tilde f}_{\kappa;m}+\sup_{s}\||\tilde f_s|_1 \|_m+\||\tilde Y_0|\wedge1\|_m+ \|\delta\tilde Y\|_{\kappa;m},
			\\&\|Df(Y)-D\bar f(\bar Y)\|_{\kappa';m}\lesssim\bk{\delta D\tilde f}_{\kappa';m}+\sup_{s}\||D\tilde f_s|_{\gamma-2}\|_m+\||\tilde Y_0|\wedge1\|_m+\|\delta\tilde Y\|_{\kappa;m},
			\label{tmp.1108}
			\\&
			\|f'(Y)-\bar f'(\bar Y)\|_{\kappa';m}\lesssim\bk{\delta \tilde f'}_{\kappa';m}+\sup_{s}\||\tilde f'_s|_{\gamma-2} \|_m+\||\tilde Y_0|\wedge1\|_m+\|\delta\tilde Y\|_{\kappa;m}.
			\label{tmp.1109}
		\end{align}

		By triangle inequality
		\begin{align*}
			|f_s(Y_s)-\bar f_s(\bar Y_s)|
			&\le|f_s(Y_s)-f_s(\bar Y_s)|+|\tilde f_s(\bar Y_s)|
			\\&\le|f_s|_1(|Y_s-\bar Y_s|\wedge1)+|\tilde f_s|_\infty,
		\end{align*}
		which gives
		\begin{equation}\label{est.cz2}
			\|Z_s-\bar Z_s\|_m\le C(\||\tilde f_s|_\infty \|_m+\||\tilde Y_s|\wedge1\|_m).
		\end{equation}

		From $\tilde Z_t=(f_t(Y_s)-f_t(\bar Y_s))+(\delta f_t(Y)_{s,t}-\delta f_t(\bar Y)_{s,t})+\tilde f_t(\bar Y_t)$,
		we have
		\begin{equation}
		\label{decomp.delta_Z}
		\begin{aligned}
			\delta \tilde Z_{s,t}
			&= \left(\delta f_{s,t}(Y_s)-\delta f_{s,t}(\bar Y_s)\right)
			\\&\quad+ \left(f_t(Y_t)-f_t(\bar Y_t)-f_t(Y_s)+f_t(\bar Y_s)\right)
			\\&\quad+\left(\tilde f_t(\bar Y_t)-\tilde f_s(\bar Y_s)\right)=:I_1+I_2+I_3.
		\end{aligned}
		\end{equation}
		It is easy to see that $|I_1|\le|\delta Df_{s,t}|_\infty|\tilde Y_s|$ and $|I_1|\le2 |\delta f_{s,t}|_\infty$ so that
		\begin{align}\label{tmp.1133new}
			|I_1|\le 2(|\delta Df_{s,t}|_\infty+|\delta f_{s,t}|_\infty)(|\tilde Y_s|\wedge1).
		\end{align}
		Since $\tilde Y_s$ is $\cff_s$-measurable, we have
		\begin{align*}
			\|I_1\|_m\le 2\big\|\||\delta Df_{s,t}|_\infty+|\delta f_{s,t}|_\infty|\cff_s\|_m\big\|_\infty \||\tilde Y_s|\wedge1\|_m
			\lesssim\||\tilde Y_s|\wedge1 \|_m(t-s)^{\alpha\wedge \alpha'}.
		\end{align*}
		Using the elementary estimate
		\begin{align*}
			|g(a)-g(b)-g(c)+g(d)|\le |Dg|_1(|a-c|+|b-d|)(|c-d|\wedge 1)
			+|Dg|_\infty|a-b-c+d|
		\end{align*}
		we see that
		\begin{align}\label{tmp.1056}
			|I_2|\le |Df_t|_1(|\delta Y_{s,t}|+|\delta \bar Y_{s,t}|)(|\tilde Y_s|\wedge1)+|Df_t|_\infty|\delta\tilde Y_{s,t}|.
		\end{align}
		Hence, $\|I_2\|_m\lesssim\||\tilde Y_s|\wedge1\|_m(t-s)^\beta +\|\delta\tilde Y_{s,t}\|_m$.

		It is easy to see that
		\begin{align}\label{tmp.I3new}
			|I_3|\le|\delta \tilde f_{s,t}|_\infty+|\tilde f_s|_1|\delta\bar Y_{s,t}|.
		\end{align}
		Since $|\tilde f_s|_1$ is $\cff_s$-measurable, we have
		\[
			\||\tilde f_s|_1|\delta\bar Y_{s,t}\|_m\le \||\tilde f_s|_1|\|_m\|\|\delta\bar Y_{s,t}|\cff_s\|_m\|_\infty,
		\] and hence
		\begin{align*}
			\|I_3\|_m\lesssim \bk{\delta \tilde f}_{\kappa;m}|t-s|^{\kappa}+\||\tilde f_s|_1\|_m|t-s|^\beta.
		\end{align*}

		Combining the estimates for $I_1,I_2,I_3$ and \eqref{est.cz2},
    we obtain that
    \begin{align*}
      |\delta\tilde Z_{s,t}|
      &\lesssim\left[\bk{\delta f}_{\alpha;m,\infty}+\bk{\delta Df}_{\alpha';m,\infty}+\||Df_t|_1 \|_\infty \right]\| |\tilde Y_s|\wedge1\|_m(t-s)^{\alpha\wedge \alpha'\wedge \beta}
      \\&\quad+\||Df_t|_1 \|_\infty\|\delta\tilde Y_{s,t}\|_m
      +\bk{\delta \tilde f}_{\kappa;m}|t-s|^{\kappa}+\||\tilde f_s|_1\|_m|t-s|^\beta.
    \end{align*}
    Noting that for every $s$
        \[
            \||\tilde Y_s|\wedge1\|_m\le \||\tilde Y_0|\wedge1\|_m+\|\delta \tilde Y\|_{\kappa;m}|I|^{\kappa},
        \]
    we derive \eqref{est.cdz} from the previous estimate. The estimates \eqref{tmp.1108}, \eqref{tmp.1109} are obtained analogously.
		(Here, it is necessary to replace $|Df_t|_1$ in \eqref{tmp.1056}  by $\| |D^2f_t|_{\gamma-2}\|_\infty $ and $\| |Df'_t|_{\gamma-2}\|_\infty $ respectively; replace $|\tilde f_s|_1$  in \eqref{tmp.I3new} by $|D\tilde f_s|_{\gamma-2}$ and $|\tilde f'_s|_{\gamma-2}$ respectively. This also justifies the restriction $\kappa'\le \min\{\kappa,\alpha',(\gamma-2)\beta\}$.) 
		\smallskip

		\textit{Step 2.} We show that 
		\begin{multline}\label{est.czp}
			\|\tilde Z'\|_{\kappa';m}\lesssim \bk{\delta D\tilde f}_{\kappa';m}+\bk{\delta \tilde f'}_{\kappa';m}+\sup_{s}(\||D\tilde f_s|_{\gamma-2} \|_m+\||\tilde f'_s|_{\gamma-2} \|_m)
			\\ +\| |\tilde Y_0|\wedge1\|_m  +\|\delta \tilde Y\|_{\kappa;m}+\|\tilde Y'\|_{\kappa';m}.
		\end{multline}

		It is elementary to verify that
		\begin{align}\label{est.mult}
			\|\eta \zeta \|_{\kappa';m}\le\| \eta\|_{\kappa';m}(\sup_s\|\zeta_s\|_\infty+\|\delta \zeta\|_{\kappa';m,\infty}).
		\end{align}
		From the identity
		\[
			Df( Y) Y'-D\bar f(\bar Y)\bar Y'=(Df(Y)-D\bar f(\bar Y)) Y'+D\bar f(\bar Y)(Y'-\bar Y'),
		\]
		applying \eqref{tmp.1108}, \eqref{est.mult} and the fact that $\sup_s\| Y'_s\|_\infty+\|\delta  Y'\|_{\kappa';m,\infty} $ and $\sup_s\|D\bar f_s(\bar Y'_s)\|_\infty+\|\delta D\bar f( \bar Y')\|_{\kappa';m,\infty} $ are finite (by assumptions and analogous argument to \eqref{tmp.080725}),
		 we obtain
		\begin{multline*}
			\|Df( Y) Y'-D\bar f(\bar Y)\bar Y'\|_{\kappa';m}\lesssim \bk{\delta D\tilde f}_{\kappa';m}+\sup_{s}\||D\tilde f_s|_{\gamma-2} \|_m
			\\+\| |\tilde Y_0|\wedge1\|_m+\|\delta\tilde Y\|_{\kappa;m}+\|\tilde Y'\|_{\kappa';m}.
		\end{multline*}
		This estimate and \eqref{tmp.1109} yield \eqref{est.czp}.\smallskip

    \textit{Step 3.} We show that
    \begin{multline}\label{est.ERERbar}
      \bk{\E_\bigcdot R^Z-\E_\bigcdot \bar R^{\bar Z}}_{\kappa+ \kappa';m}\lesssim\||\tilde Y_0|\wedge1 \|_m+ \|\delta\tilde Y\|_{\kappa;m}+\sup_s\|\tilde Y'\|_m
      \\+\|\E_\bigcdot R^Y-\E_\bigcdot \bar R^{\bar Y}\|_{\kappa+ \kappa';m}
      +\sup_s\||\tilde f_s|_{\gamma-1} \|_m+\bk{\delta D\tilde f}_{\kappa';m}+\bk{\E_\bigcdot R^f-\E_\bigcdot\bar R^{\bar f}}_{\kappa+ \kappa';m}.
    \end{multline}
    Similar to \eqref{tmp.357}, we write
    \begin{equation}\label{id.Rfeta}
      \begin{aligned}
        R^{Z}_{s,t}
        =\Taylor f_s(Y_s,Y_t) +Df_s(Y_s)[R^{Y}_{s,t}]
        +R^{ f}_{s,t}(Y_s)
        +(\delta f_{s,t}(Y_t)-\delta f_{s,t}(Y_s))
      \end{aligned}
    \end{equation}
    where
    \begin{align*}
      \Taylor h(\xi,\eta)= h(\eta)- h(\xi)-D h(\xi)[\eta- \xi].
    \end{align*}
    We decompose $R^{\bar Z}_{s,t}$ in an analogous way.
    We estimate separately the differences of the corresponding terms on the right-hand sides of the two decompositions.

    We have
    \begin{align*}
      \Taylor f_s(Y_s,Y_t)-\Taylor\bar f_s(\bar Y_s,\bar Y_t)=\Taylor\tilde f_s(\bar Y_s,\bar Y_t)+\Taylor f_s(Y_s,Y_t)-\Taylor f_s(\bar Y_s,\bar Y_t).
    \end{align*}
    By Taylor's expansion, it is evident that
    $  |\Taylor\tilde f_s(\bar Y_s,\bar Y_t)|\le |\tilde f_s|_{\gamma-1}|\delta\bar Y_{s,t}|^{\gamma-1}$,
    and hence,
    \begin{align*}
      \|\E_s\Taylor\tilde f_s(\bar Y_s,\bar Y_t)\|_m\le \||\tilde f_s|_{\gamma-1}\|_m(t-s)^{(\gamma-1)\beta}.
    \end{align*}
    Next, we put $ Y^\theta=\theta Y+(1- \theta)\bar Y$. 
	We apply the fundamental theorem of calculus to get that
    \begin{align*}
      &\Taylor f_s(Y_s,Y_t)-\Taylor f_s(\bar Y_s,\bar Y_t)
      =\int_0^1\frac{d}{d \theta}\left( f_s( Y^\theta_t)- f_s( Y^\theta_s)-Df_s( Y^\theta_s)[\delta  Y^\theta_{s,t}] \right)d \theta
      \\&=\int_0^1\left( Df_s( Y^\theta_t)[\tilde Y_t]-Df_s( Y^\theta_s)[\tilde Y_s]-Df_s( Y^\theta_s)[\delta\tilde Y_{s,t}]-D^2 f_s( Y^\theta_s)[\tilde Y_s,\delta  Y^\theta_{s,t}]\right)d \theta
      \\&=\int_0^1\left(Df_s( Y^\theta_t)[\tilde Y_s]-Df_s( Y^\theta_s)[\tilde Y_s]-D^2 f_s( Y^\theta_s)[\tilde Y_s,\delta  Y^\theta_{s,t}]\right)d \theta
      \\&\quad+\int_0^1\left( Df_s( Y^\theta_t)[\delta\tilde Y_{s,t}]-Df_s( Y^\theta_s)[\delta\tilde Y_{s,t}]\right)d \theta.
    \end{align*}
    Using the fact that $Df_s \in \C^{\gamma-1}_b$, we get that
    \begin{align*}
      |\Taylor f_s(Y_s,Y_t)-\Taylor f_s(\bar Y_s,\bar Y_t)|
      &\lesssim |Df_s|_{\gamma-1}\int_0^1\left(|\tilde Y_s||\delta  Y^\theta_{s,t}|^{\gamma-1}+|\delta  Y^\theta_{s,t}||\delta \tilde Y_{s,t}| \right)d \theta
      \\&\lesssim|\tilde Y_s|(|\delta Y_{s,t}|^{\gamma-1}+|\delta \bar Y_{s,t}|^{\gamma-1})+|\delta \tilde Y_{s,t}|(|\delta Y_{s,t}|+|\delta \bar Y_{s,t}|).
    \end{align*}
    On the other hand, we also have $|\Taylor f_s(Y_s,Y_t)-\Taylor f_s(\bar Y_s,\bar Y_t)|\lesssim |\delta Y_{s,t}|^{\gamma-1}+|\delta \bar Y_{s,t}|^{\gamma-1}$.
    Thus, we have
    \begin{align*}
      |\Taylor f_s(Y_s,Y_t)-\Taylor f_s(\bar Y_s,\bar Y_t)|
      &\lesssim(|\tilde Y_s|\wedge1)(|\delta Y_{s,t}|^{\gamma-1}+|\delta \bar Y_{s,t}|^{\gamma-1})+|\delta \tilde Y_{s,t}|(|\delta Y_{s,t}|+|\delta \bar Y_{s,t}|)
    \end{align*}
    which, in view of \cref{lem.holder_mixed} and the assumed regularity of $Y,\bar Y$, implies that
    \begin{align*}
      \|\E_s(\Taylor f_s(Y_s,Y_t)-\Taylor f_s(\bar Y_s,\bar Y_t))\|_m
      \lesssim \||\tilde Y_s|\wedge1\|_m(t-s)^{(\gamma-1)\beta}+\|\delta \tilde Y_{s,t}\|_m(t-s)^{\beta}.
    \end{align*}
    It follows that
    \begin{multline*}
      \|\E_s(\Taylor f_s(Y_s,Y_t)-\Taylor\bar f_s(\bar Y_s,\bar Y_t))\|_m
      \\\lesssim (\||\tilde f_s|_{\gamma-1}\|_m+\||\tilde Y_s|\wedge1\|_m)(t-s)^{(\gamma-1)\beta}+\|\delta \tilde Y_{s,t}\|_m(t-s)^\beta.
    \end{multline*}

    For the difference corresponding to the second term in \eqref{id.Rfeta}, we note that
    \begin{multline*}
      \E_s\left(Df_s(Y_s)[R^Y_{s,t}]-D{\bar f}_s(\bar Y_s)[\bar R^{\bar Y}_{s,t}]\right)
      \\=D \tilde f_s(\bar Y_s)[\E_s\bar R^{\bar Y}_{s,t}]
      +\left(Df_s(Y_s)[\E_sR^Y_{s,t}]-Df_s(\bar Y_s)[\E_s\bar R^{\bar Y}_{s,t}]\right).
    \end{multline*}
	Noting that $D \tilde f_s$ is bounded and $Df_s$ is Lipschitz and bounded, we have
    \begin{multline*}
      |\E_s(Df_s(Y_s)[R^Y_{s,t}]-D{\bar f}_s(\bar Y_s)[\bar R^{\bar Y}_{s,t}])|
      \lesssim|D \tilde f_s|_\infty|\E_s\bar R^{\bar Y}_{s,t}|
      \\+|Df_s|_{1}(|\tilde Y_s|\wedge1)|\E_s\bar R^{\bar Y}_{s,t}|+|Df_s|_\infty|\E_s(R^Y_{s,t}-\bar R^{\bar Y}_{s,t})|.
    \end{multline*}
    Taking into account the regularity of $Y,\bar Y$, we have
    \begin{multline*}
      \|\E_s(Df_s(Y_s)[R^Y_{s,t}]-D{\bar f}_s(\bar Y_s)[\bar R^{\bar Y}_{s,t}])\|_m
      \\\lesssim(\||D \tilde f_s|_\infty \|_m+\||\tilde Y_s|\wedge1\|_m)(t-s)^{\beta+\beta'}+\|\E_s(R^Y_{s,t}-\bar R^{\bar Y}_{s,t})\|_m.
    \end{multline*}

    For the difference corresponding to the third term in \eqref{id.Rfeta}, we write
    \begin{align*}
      R^{ f}_{s,t}(Y_s)-R^{{\bar f}}_{s,t}(\bar Y_s)
      &=(R^{ f}_{s,t}-R^{{\bar f}}_{s,t})(\bar Y_s)
       +R^{ f}_{s,t}(Y_s) -R^{ f}_{s,t}(\bar Y_s).
    \end{align*}
    We note that
    \begin{align*}
      |\E_s(R^{ f}_{s,t}(Y_s) -R^{ f}_{s,t}(\bar Y_s))|\le 2\bk{\E_\bigcdot R^f}_{\alpha+\alpha';\infty} (t-s)^{\alpha+\alpha'}
    \end{align*}
    and by the fundamental theorem of calculus that
    \begin{align*}
      |\E_s(R^{ f}_{s,t}(Y_s) -R^{ f}_{s,t}(\bar Y_s))|
      &=\Big| \E_s \int_0^1R^{Df}_{s,t}(\theta Y_s+(1- \theta)\bar Y_s)[\tilde Y_s]d \theta\Big|
      \\&\le \bk{\E_\bigcdot R^{Df}}_{\alpha'+\alpha'';\infty}
       (t-s)^{\alpha'+\alpha''}|\tilde Y_s|.
    \end{align*}
    Combining the previous inequalities, we obtain that
    \begin{align*}
      \||\E_s(R^{ f}_{s,t}(Y_s)-R^{{\bar f}}_{s,t}(\bar Y_s))|_\infty \|_m
      \lesssim \|\E_s(R^{ f}_{s,t}-R^{{\bar f}}_{s,t}) \|_m
      +\||\tilde Y_s|\wedge1\|_m(t-s)^{\alpha'+\alpha\wedge \alpha''}.
    \end{align*}

    For the difference corresponding to the last term in \eqref{id.Rfeta}, we write
    \begin{multline*}
      \delta f_{s,t}(Y_t)-\delta f_{s,t}(Y_s)
      -(\delta {\bar f}_{s,t}(\bar Y_t)-\delta {\bar f}_{s,t}(\bar Y_s))
      \\=\delta \tilde f_{s,t}(\bar Y_t)-\delta \tilde f_{s,t}(\bar Y_s)
      +\left[\delta f_{s,t}(Y_t)-\delta f_{s,t}(Y_s)
          -(\delta f_{s,t}(\bar Y_t)-\delta f_{s,t}(\bar Y_s))\right].
    \end{multline*}
    Similar to \eqref{tmp.1056}, we have
    \begin{multline*}
      |\delta f_{s,t}(Y_t)-\delta f_{s,t}(Y_s)
      -(\delta {\bar f}_{s,t}(\bar Y_t)-\delta {\bar f}_{s,t}(\bar Y_s))|
      \\\lesssim |\delta \tilde f_{s,t}|_{1}|\delta \bar Y_{s,t}|+|\delta D f_{s,t}|_{1}(|\delta Y_{s,t}|+|\delta\bar Y_{s,t}|)(|\tilde Y_s|\wedge1)+|\delta D f_{s,t}|_{\infty}|\delta \tilde Y_{s,t}|.
    \end{multline*}
	Applying \cref{lem.holder_mixed} (mixed H\"older estimates) to estimate the conditional moments of the right-hand side, we obtain that
	\begin{align*}
		\|\E_s[|\delta \tilde f_{s,t}|_{1}|\delta \bar Y_{s,t}|]\|_m&\le \||\delta \tilde f_{s,t}|_{1}\|_m \times \|\|\delta \bar Y_{s,t}|\cff_s\|_m\|_\infty
		,\\\E_s[|\delta D f_{s,t}|_{1}(|\delta Y_{s,t}|+|\delta\bar Y_{s,t}|)]
		&\le \|\||\delta D f_{s,t}|_{1}|\cff_s\|_m\|_\infty \times \|\|(|\delta Y_{s,t}|+|\delta\bar Y_{s,t}|)|\cff_s \|_m\|_\infty
		,\\\|\E_s|\delta D f_{s,t}|_{\infty}|\delta \tilde Y_{s,t}|\|_m&\le \|\||\delta D f_{s,t}|_{\infty}|\cff_s\|_m\|_\infty \times \|\delta \tilde Y_{s,t}\|_m. 
	\end{align*}
    Taking into account the regularity of $f$ and $Y,\bar Y$, we deduce that
    \begin{multline*}
      \|\E_s[\delta f_{s,t}(Y_t)-\delta f_{s,t}(Y_s)
              -(\delta {\bar f}_{s,t}(\bar Y_t)-\delta {\bar f}_{s,t}(\bar Y_s))]\|_m
      \\\lesssim \||\delta \tilde f_{s,t}|_{1}\|_m(t-s)^\beta +\||\tilde Y_s|\wedge1\|_m(t-s)^{\alpha'+\beta}+\|\delta \tilde Y_{s,t}\|_m(t-s)^{\alpha'}.
    \end{multline*}

    Summing up the estimates for all the differences, we obtain that
    \begin{equation}\label{tmp.erer}
      \begin{aligned}
      \|\E_s R^Z_{s,t}&-\E_s\bar R^{\bar Z}_{s,t}\|_m
      \lesssim
      \||\tilde Y_s|\wedge1\|_m)(t-s)^{\min((\gamma-1)\beta,\beta+\beta',\alpha'+\alpha\wedge \alpha'',\alpha'+\beta)}
     \\&\quad +\|\delta \tilde Y_{s,t}\|_m(t-s)^{\min(\alpha',\beta)}
      +\|\E_s(R^Y_{s,t}-\bar R^{\bar Y}_{s,t})\|_m
      \\&\quad+\||\tilde f_s|_{\gamma-1}\|_m(t-s)^{\min((\gamma-1)\beta,\beta+\beta')}
      +\||\delta \tilde f_{s,t}|_{1}\|_m(t-s)^\beta +\|\E_s(R^{ f}_{s,t}-R^{{\bar f}}_{s,t}) \|_m,
      \end{aligned}
    \end{equation}
        which implies \eqref{est.ERERbar}.\smallskip

		\textit{Conclusion.} Combining \eqref{est.cdz}, \eqref{est.czp}, \eqref{est.ERERbar} we obtain \eqref{est.zz}.
	\end{proof}


\section{Rough stochastic differential equations} 
\label{sec.rough_stochastic_differential_equations}
	Let $\W=(\Omega,\mathcal G,\P;\{\cff_t\})$ be a stochastic basis, $B$ be a standard $\{\cff_t\}$-Brownian motion in $\Vone$, $\X=(X,\XX) $ be a deterministic rough path in $\mathscr{C}^\alpha(V)$  with $\alpha\in(\frac13,\frac12]$. We consider the rough stochastic differential equation
	\begin{equation}\label{eqn.srde}
		dY_t(\omega)=b_t(\omega,Y_t(\omega))dt+\sigma_t(\omega,Y_t(\omega))dB_t(\omega)+(f_t,f'_t)(\omega,Y_t(\omega))d\X_t,\quad t\in[0,T].
	\end{equation}
	We are given a drift vector field
	$b\colon \Omega\times [0,T]\times W\to W$, and vector fields
	$\sigma\colon\Omega\times[0,T]\times W\to \mathcal L(\Vone,W)$,
	$f\colon \Omega\times[0,T]\times W\to \mathcal L(V,W)$, $f'\colon \Omega\times[0,T]\times W\to \mathcal L(V\otimes V,W)$.
	We assume further that $b,\sigma,f,f'$ are progressively measurable (as functions of $\omega,t$, fixed $y$) and joint measurability in $(\omega,t,y)$, 
	and that for each $t$ and a.s. $\omega$, $y\mapsto f_t(\omega,y)$ is differentiable with derivative $Df_t(\omega,y)$.
	In what follows, we omit the $\omega$-dependence in the coefficients $\sigma,b,f,f',Df$.
	We assume moreover
	that $\sigma,b$ are random bounded continuous functions, in the sense of the next definition.
	\begin{definition}\label{def.randomcoef}
		Let $W,\bar W$ be some finite dimensional Euclidean spaces and fix a Borel set \( S\subset W \). Let \( (t,\omega)\mapsto g_t(\omega,\cdot) \) be a progressively measurable stochastic process from \( \Omega\times [0,T] \to \Cb(S;\bar W)\) (in the sense of a family of strongly measurable random variables as defined in \cref{sec.preliminaries}).
We say that:
		\begin{enumerate}[label=(\alph*)]
			\item $g$ is \textit{random bounded continuous} if 
			is uniformly bounded, namely, there exists a deterministic constant $\|g\|_\infty$ such that
				\[
					\sup_{t\in[0,T]}\esssup_{\omega\in \Omega}\sup_{x\in S}|g_t(\omega,x)|\le\|g\|_\infty.
				\]
			\item $g$ is \textit{random bounded Lipschitz} if it is random bounded continuous, progressively measurable from \( \Omega\times[0,T]\to \Cb^1 (S;\bar W) \) and uniformly bounded in the sense that
				\[
					\sup_{t\in[0,T]}\esssup_{\omega\in \Omega}\sup_{x,\bar x\in S}\frac{|g_t(\omega,x)-g_t(\omega,\bar x)|}{|x-\bar x|}\le \|g\|_\lip
				\]
				for some constant \( \|g\|_{\lip} \).
		\end{enumerate}
	\end{definition}
	We give the definition of $L_{m,n}$-integrable solutions, make in particular use of the space of stochastic controlled rough paths from \cref{def.SCRP}.
	\begin{definition}[Integrable solutions]
		\label{def.soln}
		Let $m,n$ be (extended) real numbers such that $m\in[2,\infty)$ and $n\in[m,\infty]$.
		An $L_{m,n}$-integrable solution of \eqref{eqn.srde}  over $[0,T]$ is a continuous $\{\cff_t\}$-adapted process $Y$ such that the following conditions are satisfied
		\begin{enumerate}[label=(\alph*)]
			\item\label{def.conbs} $\int_0^T |b_r(Y_r)|dr$ and $\int_0^T |(\sigma \sigma^\dagger)_r(Y_r)|dr$ are finite a.s.;
			\item\label{def.confY} $(f(Y),Df(Y)f(Y)+f'(Y))$ belongs to $\D_X^{\bar\alpha,\bar\alpha'}L_{m,n}([0,T],\W;\cll(V,W))$
			for some%
			\footnote{Take $\alpha = \bar\alpha =\bar\alpha'$ at first reading.}
			\[
			\bar\alpha,\bar\alpha'\in(0,1]:\enskip
 \alpha+(\alpha\wedge \bar\alpha)>\frac12,\enskip
 \alpha+(\alpha\wedge\bar\alpha)+\bar\alpha'>1;\]
			\item\label{def.Davie.expansion} $Y$ satisfies the following stochastic Davie-type expansion
			\begin{equation}\label{est.defJ}
				\|\|J_{s,t}|\cff_s\|_m\|_n=o(t-s)^{1/2}
				\tand\|\E_sJ_{s,t}\|_n=o(t-s)
			\end{equation}
			for every $(s,t)\in \Delta$, where
			\begin{align}
				J_{s,t}
				&=\delta Y_{s,t}-\int_s^tb_r(Y_r)dr-\int_s^t \sigma_r(Y_r)dB_r
				\nonumber\\&\quad-f_s(Y_s)\delta X_{s,t}-\left(Df_s(Y_s)f_s(Y_s)+f'_s(Y_s)\right)\XX_{s,t}.\label{def.J}
			\end{align}
		\end{enumerate}
		When the initial datum $Y_0=\xi$ is specified, we say that $Y$ is a solution starting from $\xi$.
	\end{definition}

	We begin by showing that a solution to \eqref{eqn.srde} satisfies an integral equation, therefore, providing a dynamical description which is equivalent to the local description of \cref{def.soln}.
	The deterministic counterpart of this characterization appears in \cite{MR2387018}.
	\begin{proposition}\label{prop.davie}
		$Y$ is an $L_{m,n}$-integrable solution of \eqref{eqn.srde} if and only if \ref{def.conbs}-\ref{def.confY} of \cref{def.soln} hold and for $\P$-a.s. $\omega$,
		\begin{equation}\label{eqn.int.srde}
			Y_t=Y_0+\int_0^t b_r(Y_r)dr+\int_0^t \sigma_r(Y_r)dB_r+\int_0^tf_r (Y_r)d\X_r \text{ for all }t\in[0,T].
		\end{equation}
		Furthermore, in this case, we have for $J$ as in \cref{def.soln} and for any $(s,t)\in\Delta$,
		\begin{equation}\label{est.defJex}
			\|\|J_{s,t}|\cff_s\|_m\|_n\lesssim |t-s|^{\alpha+(\alpha\wedge \bar\alpha)}
			\tand\|\E_sJ_{s,t}\|_n\lesssim |t-s|^{\alpha+(\alpha\wedge \bar\alpha)+\bar\alpha'}.
		\end{equation}
	\end{proposition}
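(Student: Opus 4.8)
The plan is to reduce everything to one algebraic identity linking the Davie remainder $J$ of \eqref{def.J}, the genuine equation error $K$, and the remainder of a rough stochastic integral, and then to read off both implications from \cref{thm.Hroughint} and the uniqueness criterion \cref{lem.uniq}.

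First I would record that, under \ref{def.confY}, the pair $(Z,Z'):=(f(Y),\,Df(Y)f(Y)+f'(Y))$ lies in $\DL{\bar\alpha,\bar\alpha'}{m}{n}$ with exponents for which \cref{thm.Hroughint} applies — indeed $\alpha+\bar\alpha\ge\alpha+(\alpha\wedge\bar\alpha)>\tfrac12$ and $\alpha+(\alpha\wedge\bar\alpha)+\bar\alpha'>1$ are exactly the standing hypotheses there. Hence $\int_0^\cdot f(Y)\,d\X$ is well-defined, additive in its endpoints, $\{\cff_t\}$-adapted, and its remainder
\[
R_{s,t}:=\int_s^t f(Y)\,d\X-f_s(Y_s)\,\delta X_{s,t}-\bigl(Df_s(Y_s)f_s(Y_s)+f'_s(Y_s)\bigr)\bxx_{s,t}
\]
satisfies, by \eqref{est.Hzm} (together with the trivial bound $\|\|Z'_s\bxx_{s,t}|\cff_s\|_m\|_n\lesssim|t-s|^{2\alpha}$ and $2\alpha\ge\alpha+(\alpha\wedge\bar\alpha)$) and by \eqref{est.Hcondz},
\[
\|\|R_{s,t}|\cff_s\|_m\|_n\lesssim|t-s|^{\alpha+(\alpha\wedge\bar\alpha)},\qquad\|\E_s R_{s,t}\|_n\lesssim|t-s|^{\alpha+(\alpha\wedge\bar\alpha)+\bar\alpha'}.
\]
Comparing with \eqref{def.J} one obtains the identity $J_{s,t}=K_{s,t}+R_{s,t}$, where
\[
K_{s,t}:=\delta Y_{s,t}-\int_s^t b_r(Y_r)\,dr-\int_s^t\sigma_r(Y_r)\,dB_r-\int_s^t f(Y)\,d\X,
\]
and $K$ is an exact increment, $K_{s,t}=K_{0,t}-K_{0,s}$, by additivity of $\delta Y$ and of the three integrals; moreover each $K_{s,t}$ is $\cff_t$-measurable.

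Then the ``if'' direction is immediate: if \ref{def.conbs}, \ref{def.confY} hold and \eqref{eqn.int.srde} holds for every $t$, then $K_{0,t}=0$ a.s., hence $K_{s,t}=0$ a.s.\ and $J_{s,t}=R_{s,t}$; the two bounds above are then exactly \eqref{est.defJex}, and since $\alpha+(\alpha\wedge\bar\alpha)>\tfrac12$ and $\alpha+(\alpha\wedge\bar\alpha)+\bar\alpha'>1$ they imply \eqref{est.defJ}, so $Y$ is an $L_{m,n}$-integrable solution. For the ``only if'' direction, $J$ satisfies \eqref{est.defJ} by assumption while the bounds on $R$ are $o(|t-s|^{1/2})$ resp.\ $o(|t-s|)$, so $K=J-R$ (using Minkowski for conditional norms and linearity of $\E_s$) satisfies $\|\|K_{s,t}|\cff_s\|_m\|_n=o(|t-s|^{1/2})$ and $\|\E_s K_{s,t}\|_n=o(|t-s|)$. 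Using $\|K_{s,t}\|_2\le\|\|K_{s,t}|\cff_s\|_m\|_n$ and $\|\E_s K_{s,t}\|_2\le\|\E_s K_{s,t}\|_n$ (valid since $2\le m\le n$, cf.\ \eqref{ineq.mn}), $K$ meets the hypotheses of \cref{lem.uniq} with $w(s,t)=t-s$ and $\eta(s,t)=\|K_{s,t}\|_2^2/(t-s)+\|\E_s K_{s,t}\|_2/(t-s)$, which tends to $0$ as $|t-s|\to0$. Hence $\sum_{[u,v]\in\cpp}K_{u,v}\to0$ in $L_2$; since $K$ is an exact increment this Riemann sum equals $K_{0,t}$ for every partition $\cpp$ of $[0,t]$, so $K_{0,t}=0$ a.s.\ for each $t$, which is \eqref{eqn.int.srde}. (Since the right-hand side of \eqref{eqn.int.srde} is continuous in $t$, the rough stochastic integral having a continuous version by \cref{cor.rintiscont}, the identity then holds simultaneously for all $t$.) The ``furthermore'' claim is now automatic: in either case $K\equiv0$, so $J=R$ and \eqref{est.defJex} holds.

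The step I expect to require most care is not conceptually hard but bookkeeping-heavy: verifying that the exponents produced by \ref{def.confY} are precisely those consumed by \cref{thm.Hroughint} so that $R$ has the stated H\"older rates, and that the ``little-$o$'' smallness in \cref{def.soln} is uniform enough in $s$ to yield an admissible $\eta\to0$ for \cref{lem.uniq}. The two structural ingredients that make the uniqueness argument close are the additivity of the sewn rough stochastic integral (so the Riemann sums of $K$ telescope to $K_{0,t}$) and the assumption $m\ge2$ (so that conditional-$L_m$ bounds dominate $L_2$ bounds).
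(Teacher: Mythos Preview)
Your proof is correct and follows essentially the same route as the paper's: write $J=K+R$ with $R$ the rough-integral remainder controlled by \cref{thm.Hroughint} and $K$ the exact-increment equation error, then use \cref{lem.uniq} to kill $K$. One small caveat: your parenthetical appeal to \cref{cor.rintiscont} requires $n=\infty$, which is not assumed here---but that aside is unnecessary since the proposition only asserts \eqref{eqn.int.srde} for each fixed $t$.
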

	\begin{proof}
	Assume first that \ref{def.conbs} and \ref{def.confY} of \cref{def.soln} hold.
	Define $A_{s,t}=f_s(Y_s)\delta X_{s,t}+(Df_s(Y_s)f_s(Y_s)+f'_s(Y_s))\XX_{s,t}$ and
	\[
		Z_t=Y_t-Y_0-\int_0^tb_r(Y_r)dr-\int_0^t \sigma_r(Y_r)dB_r \,.
	\]
	Since  $(f(Y),Df(Y)f(Y)+f'(Y))$ belongs to $ \D^{\bar\alpha,\bar\alpha'}_XL_{m,n}$, we can apply \cref{thm.Hroughint} to define the rough stochastic integral $\caa_\cdot:=\int_0^\cdot f(Y)d\X$ which then satisfies
	\begin{align}\label{tmp.1224}
	 	\|\|\delta\caa_{s,t}-A_{s,t}|\cff_s\|_m\|_{n}\lesssim |t-s|^{\alpha+(\alpha\wedge \bar\alpha)}
	 	\text{ and }
	 	\|\E_s (\delta\caa_{s,t}-A_{s,t})\|_{n}\lesssim|t-s|^{\alpha+(\alpha\wedge \bar\alpha)+\bar\alpha'}
	\end{align}
	for every $(s,t)\in \Delta$. Now, suppose that $Y$ is a $L_{m,n}$-integrable solution.
	We can combine \eqref{tmp.1224} with \eqref{est.defJ} and \eqref{def.J} to obtain that
	\begin{align*}
		\|\|\delta Z_{s,t}- \delta\caa_{s,t}|\cff_s\|_m\|_{n}=o(|t-s|)^{\frac12}
		\quad\textrm{and}\quad
		\|\E_s(\delta Z_{s,t}-\delta\caa_{s,t})\|_{n}=o(|t-s|)\,.
	\end{align*}
	The previous estimates imply (by \cite[Lemma 3.5]{LeSSL2}) that $Z_t=\caa_t$ a.s. for every $t\in[0,T]$. Since both $Z$ and $\caa$ are continuous, they are indistinguishable, which means that \eqref{eqn.int.srde} holds. This shows the necessity.

	Sufficiency is evident from the fact that if \eqref{eqn.int.srde} holds then together with \eqref{tmp.1224}, it implies \eqref{est.defJ}. That $Y$ is a.s. continuous is evident from \eqref{eqn.int.srde}. Hence, we have shown that $Y$ is an $L_{m,n}$-integrable solution. At last, observing that \eqref{tmp.1224} implies \eqref{est.defJex}, we conclude the proof.
	\end{proof}
	\begin{remark}\label{rmk.JwhenBounded}
		When $\X$ belongs to $\CC^\alpha$, $\alpha\in(\frac13,\frac12]$ and
		\begin{align*}
		\sup_{s\in[0,T]}\|g_s(Y_s)\|_n<\infty ,\quad\forall g\in\{b,\sigma \sigma^\dagger,f,Dff,f'\},
		\end{align*}
		the estimates in \eqref{est.defJex} and \eqref{def.J} imply that
		\begin{equation}\label{k3b}
		\|\|\delta Y_{s,t}|\cff_s\|_m \|_{n}\lesssim|t-s|^{ \alpha}
		\tand
		\left \|\E_s\left(\delta Y_{s,t}-f_s(Y_s)\delta X_{s,t}\right)\right \|_{n} \lesssim|t-s|^{2\alpha}\,.
		\end{equation}
		In this case, any $L_{m,n}$-integrable solution to \eqref{eqn.srde} satisfying $f(Y)\in C^\beta L_{m,n}$ for some $\beta\in(0,\alpha]$ necessarily belongs to $ \D^{\alpha,\beta}_XL_{m,n}$.
	\end{remark}

	Each $L_{m,\infty}$-solution is bounded in the following sense.
	\begin{proposition}[A priori estimates]\label{prop.apri}
			Suppose that $b,\sigma$ are random bounded continuous  and $(f,f')$ belongs to $ \D^{\beta,\beta'}_XL_{m,\infty}\C^{\gamma-1}_b$ with $\beta\in(0,\alpha]$, $\beta'\in(0,1]$ and $\gamma\in(2,3]$ such that $\alpha+(\gamma-1)\beta>1$ and $\alpha+\beta+\beta'>1$.
			Let $Y$ be an $L_{m,\infty}$-solution to \eqref{eqn.srde}
			and take any finite constant $M$ such that
			\begin{align*}
				&|f|_{\gamma-1;\infty;[0,T]}+\|\delta f\|_{\beta;m,\infty;[0,T]}+\|\EE R^f\|_{\beta+\beta';m,\infty;[0,T]}\le M,
				\\
				\shortintertext{and}
				&|f'|_{\gamma-2;\infty;[0,T]}+\|\delta f'\|_{\beta';m,\infty;[0,T]}\le M^2.
			\end{align*}
			Define
			\begin{align*}
				K=&1+\|b\|_\infty+\|\sigma\|_\infty+M|\delta X|_\alpha+M^2|\XX|_\beta.
			\end{align*}
			Then, there exists a constant $C$ depending only on $T,m,\alpha,\beta,\beta',\gamma$ such that
			\begin{gather}\label{est.apri.m}
				\|\delta  Y\|_{\alpha;m,\infty;[0,T]}+\|\E_\bigcdot R^Y\|_{\alpha+\beta;\infty;[0,T]}
				\le CK^{2+2/\beta''},
			\end{gather}
			where $\beta''=\min\{(\gamma-2)\beta,\beta'\}$ and $R^{Y}=\delta Y-f(Y)\delta X$.
			Furthermore, we have
			\begin{gather}
				\|(Y,Y')\|_{X;\alpha,\beta;m,\infty}\le \bar C(1+M) K^{2+2/\beta''},
				\label{est.apri.YinD}
				\\
				\|(Z,Z')\|_{X;\beta,\beta'';m,\infty}\le \bar C(1+M^2) K^{(\gamma-1)(2+2/\beta'')},
				\label{est.apriZinD}
			\end{gather}
			where  $Y'=Z=f(Y)$, $Z'=(Df(Y)f(Y)+f'(Y))$ and $\bar C$  is a constant depending only on $T,m,\alpha,\beta,\beta',\gamma$.
	\end{proposition}
		\begin{proof}
			Inequality \eqref{est.apri.YinD} is a consequence of \eqref{est.apri.m} and 
			\begin{equation*}
				\|\|\delta Z_{s,t}|\cff_s\|_m\|_\infty\le(\bk{\delta f}_{\beta;m,\infty}+ \|Df_s\|_\infty\|\delta Y\|_{\beta;m,\infty})|t-s|^\beta\,.
			\end{equation*}
			Inequality \eqref{est.apriZinD} is a direct consequences of \eqref{est.apri.YinD} and \eqref{est.ZinD}. Hence, it suffices to show \eqref{est.apri.m}.
			To this aim, our strategy is to obtain a closed argument from \cref{thm.Hroughint,lem.composecvec}.  Without loss of generality, we can and will assume that $\beta'\le \beta$. Moreover, by working with $(MX,M^2\XX)$ and $(f/M,f'/M^2)$ instead of $(X,\XX)$ and $(f,f')$, we can also assume that $M=1$. In this case, $K=1+\rho_{\alpha,\beta}(\X)+\|b\|_\infty+\|\sigma\|_\infty$. 
			All implicit constants herein depend only on $T,m,\alpha,\beta,\beta',\gamma$.\smallskip

			\textit{Step 1: local estimates.}
			As noted in \cref{rmk.JwhenBounded}, the fact that $Y$ is a solution together with regularity of coefficients implies that $(Y,f(Y))$ belongs to \( \D^{\alpha,\beta}_{X}L_{m,\infty}\subset \D^{\beta,\beta'}_{X}L_{m,\infty}\). 
			Applying \cref{lem.composecvec}, we see that \((Z,Z')=(f(Y),Df(Y)f(Y)+f'(Y))\) is a stochastic controlled rough path in \( \D^{\beta,\beta''}_{X}L_{m,\infty}\) with \( \beta''=\min((\gamma-2)\beta,\beta')\).
			Additionally, noting $\gamma\le 3$,  we obtain from the estimates \eqref{est.cfy}-\eqref{est.cddf} in the proof of \cref{lem.composecvec} that 
			\begin{align*}
				&\|\delta Z\|_{\beta;m,\infty}\lesssim(1\vee \|\delta Y\|_{\beta;m,\infty}),
				\\
				&\|\EE R^Z\|_{\beta+\beta'';\infty} \lesssim   \|\EE R^Y\|_{\beta+\beta';\infty} +(1\vee \|\delta Y\|_{\beta;m,\infty})^{2},
				\\ &\|\delta Z'\|_{\beta'';m,\infty}\lesssim(1\vee \|\delta Y\|_{\beta;m,\infty}),
			\end{align*}
			and hence, noting that $\|Z'_t\|_{\infty}\le 2$, 
			\begin{align*}
				\|R^Z\|_{\beta;m,\infty}
				&\lesssim (1\vee \|\delta Y\|_{\beta;m,\infty})+|\delta X|_\alpha.
			\end{align*}
			These estimates imply that 
			\begin{multline}
				\Gamma_1([s,t]):=\Gamma_1^{\beta,\beta'';m,\infty}(\X,\delta Z',R^Z;[s,t])
				\\\lesssim 
				K
				\big(\|\E_{\bigcdot} R^Y\|_{\beta+ \beta';\infty;[s,t]}
				+(1\vee\|\delta Y\|_{\beta;m,\infty;[s,t]})^{2}\big)
				\label{est.Gamma1}	
			\end{multline}
			and 
			\begin{align}
				\Gamma_2([s,t]):=\Gamma_2^{\beta,\beta'';m,\infty}(\X,\delta Z',R^Z;[s,t])\lesssim K(1\vee \|\delta Y\|_{\beta;m,\infty;[s,t]})+K^2.
				\label{est.Gamma2}
			\end{align}

			Next,  we estimate $\|\EE R^Y\|_{\beta+\beta';\infty;[s,t]}$. We put 
			\begin{align*}
				J:=\int Zd\X-Z\delta X-Z'\XX=R^Y-Z'\XX-\int b(Y)dr-\int \sigma(Y)dB_r,
			\end{align*}
			where the second identity follows from \eqref{eqn.int.srde} (applicable because our assumptions on $\beta,\beta'$ ensure that $\alpha+\beta>1/2$ and $\alpha+\beta+\beta''>1$).
			We have
			\begin{align*}
				\|\EE R^Y\|_{\alpha+\beta;\infty;[s,t]}\lesssim \|\EE J\|_{\alpha+\beta+\beta'';\infty;[s,t]}(t-s)^{\beta''}+K.
			\end{align*}
			By \cref{thm.Hroughint} (with $\beta'=\beta''$ therein) and \eqref{est.Gamma1}, we have 
			\begin{align*}
				\|\EE J\|_{\alpha+\beta+\beta'';m,\infty;[s,t]}\lesssim \Gamma_1([s,t])
				\lesssim K
				\big(\|\E_{\bigcdot} R^Y\|_{\beta+ \beta';\infty;[s,t]}+(1\vee\|\delta Y\|_{\beta;m,\infty;[s,t]})^{2}
				\big)
			\end{align*}
			Hence, we obtain
			\begin{equation}\label{tmp.imp0}
				\|\E_\bigcdot R^{Y}\|_{\alpha+\beta;\infty;[s,t]}
				\lesssim K(t-s)^{\beta''}(\|\E_\bigcdot R^{Y}\|_{\beta+\beta';\infty;[s,t]}+(1\vee\|\delta Y\|_{\beta;m,\infty;[s,t]})^{2})
				+K.
			\end{equation}
			So if $t-s\le \ell$ for some sufficiently small $\ell\in(0,1)$ such that
			\begin{equation}\label{con.smallness0}
				K\ell^{\beta''}\ll1,
			\end{equation}
			we derive from \eqref{tmp.imp0} that
			\begin{equation}\label{tmp.1149}
				\|\E_\bigcdot R^{Y}\|_{\alpha+\beta;\infty;[s,t]}\lesssim
				(1\vee\|\delta Y\|_{\beta;m,\infty;[s,t]})^{2}+K.
			\end{equation}

			From the defining identity for $J$, 
			we apply the bounds
			\begin{align*}
				|\int_{s}^{t}b_r(Y_r)dr|\le\|b\|_\infty|t-s|,
				\quad
				\Big\|\Big\|\int_{s}^{t}\sigma_r(Y_r)dB_r\Big|\cff_{s}\Big\|_m\Big\|_\infty\lesssim\|\sigma\|_\infty|t-s|^{\frac12}
			\end{align*}
			to obtain that
			\begin{align*}
				\|\delta Y\|_{\alpha;m,\infty;[s,t]}
				\lesssim \|J\|_{\alpha+\beta;m,\infty;[s,t]}(t-s)^\beta+ K.
			\end{align*}
			By \cref{thm.Hroughint}, \eqref{est.Gamma1}, \eqref{est.Gamma2} and \eqref{tmp.1149}, we have 
			\begin{align*}
				\|J\|_{\alpha+\beta;m,\infty;[s,t]}
				&\lesssim \Gamma_1([s,t])+\Gamma_2([s,t])
				\\&\lesssim K
				(1\vee\|\delta Y\|_{\beta;m,\infty;[s,t]})^{2}+ K^2.
			\end{align*}
			Altogether, using \eqref{con.smallness0}, we see that
			\begin{align*}
				(\|\delta Y\|_{\alpha;m,\infty;[s,t]}\vee1)
				\lesssim(\|\delta Y\|_{\alpha;m,\infty;[s,t]}\vee1)^{2} K \ell^{\beta''}+K
			\end{align*}
			for every $(s,t)\in \Delta$ satisfying $0\le t-s\le\ell$.
			Reasoning as in \cite[Chapter 8.4]{FH20}, there exists a constant $\ell_0\sim (K)^{-2/\beta'' }$ so that for $\ell\le \ell_0$, we have
			\begin{align}\label{est.yonsmall}
				(\|\delta Y\|_{\alpha;m,\infty;[s,t]}\vee1)\lesssim K
				\text{ whenever } 0\le t-s\le\ell.
			\end{align}
			Plugging \eqref{est.yonsmall} in  \eqref{tmp.1149}, we obtain
			\begin{align}\label{est.Ryonsmall}
				\|\E_\bigcdot R^Y\|_{\alpha+\beta;\infty;[s,t]}\lesssim K^2
			\end{align}
whenever \( 0\le t-s\le\ell\le\ell_0 \).
\smallskip

			\textit{Step 2: extension over the whole interval $[0,T]$.}
			If $s\le t$ are fixed, then for any partition $\{\tau_i\}_{i=0}^N$ of $[s,t)$,
			we have by a telescopic argument (using $\delta R^{Y}_{s,u,t}=-\delta Z_{s,u}\delta X_{u,t}$) that
			\[
			R^{Y}_{s,t}=\sum_{i=0}^{N-1}\left (R^{Y}_{\tau_i,\tau_{i+1}} - \delta Z_{\tau_i,\tau_{i+1}}\delta X_{\tau_{i+1},t}\right ).
			\]
			Using triangle inequality and bounding the conditional expectations in an obvious way, we have
			\begin{align*}
				|\E_sR^{Y}_{s,t}|\le \sum_{i=0}^{N-1}\left(\|\E_{\tau_i}R^{Y}_{\tau_i,\tau_{i+1}}\|_\infty+\|\|\delta Z_{\tau_i,\tau_{i+1}}|\cff_{\tau_i}\|_m\|_\infty|X|_{\alpha;[0,T]}(t-s)^\alpha \right).
			\end{align*}
			From here, the estimate for $\|\delta Z\|_{\beta;m,\infty}$ and the estimates on small intervals \eqref{est.yonsmall} and \eqref{est.Ryonsmall} can be combined to obtain
			\begin{align*}
				\|\E_s R^Y_{s,t}\|_{\infty}&\lesssim \frac{K^2}{\ell_0}(t-s)^{\alpha+\beta}.
			\end{align*}
			This yields the estimate for $\|\E_\bigcdot R^Y\|_{\alpha+\beta;\infty;[0,T]}$ in \eqref{est.apri.m}. 
			Noting that $\ell_0$ can be chosen
			The estimate for $\|\delta Y\|_{\alpha;m,\infty;[0,T]}$ follows from  \eqref{est.yonsmall} by similar arguments, completing the proof. 
		\end{proof}

	\subsection{Existence and uniqueness} 
	\label{sub.strong_existence}
		In this section, we construct a solution to \eqref{eqn.srde} by a fixed-point argument.
		\begin{theorem}\label{thm.fixpoint}
			Let $m$ be in $[2,\infty)$ and $\X\in \CC^\alpha$ with \( \frac13< \alpha\le\frac12 \). Let	$b,\sigma$ be random bounded Lipschitz functions,
		 assume that $(f,f')$ belongs to  $\D^{2 \alpha}_XL_{m,\infty}\C^\gamma_b$ while $(Df,Df')$ belongs to  $\D^{\alpha,\alpha''}_XL_{m,\infty}\C^{\gamma-1}_b$.
			Assume moreover that $\gamma>\frac1 \alpha$ and $2 \alpha+\alpha''>1$. Then for every $\xi\in L_0(\cff_0;W)$, there exists a unique $L_{m,\infty}$-integrable  solution to \eqref{eqn.srde} starting from $\xi$ over any finite time interval.
		\end{theorem}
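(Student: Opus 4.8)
The plan is to set up a Picard iteration in a suitable ball of the space $\DL{\alpha,\alpha''}{m}{\infty}$ (or rather in a space of processes whose controlled-rough-path data is $(Y,f(Y))$), work on a short time interval $[0,\tau]$, and show the map is a contraction there; then patch the local solutions together to cover $[0,T]$. First I would fix the solution map: given an adapted process $Y$ with $(Y,f(Y))\in\DL{\alpha,\alpha''}{m}{\infty}$, define
\begin{equation*}
  \Phi(Y)_t = \xi + \int_0^t b_r(Y_r)\,dr + \int_0^t \sigma_r(Y_r)\,dB_r + \int_0^t f(Y)\,d\X,
\end{equation*}
where the last term is the rough stochastic integral from \cref{thm.Hroughint}. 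The key structural input is \cref{lem.composecvec}: since $(f,f')\in\mathbf{D}^{2\alpha}_XL_{m,\infty}\C^\gamma_b$ and $(Df,Df')\in\mathbf{D}^{\alpha,\alpha''}_XL_{m,\infty}\C^{\gamma-1}_b$, the composition $(f(Y),Df(Y)f(Y)+f'(Y))$ is again a stochastic controlled rough path — and crucially, because we work with $n=\infty$, there is \emph{no loss of integrability} (cf.\ \cref{rem:loss}), so the fixed-point loop closes in $L_{m,\infty}$. Then \cref{thm.Hroughint} shows $(\int f(Y)\,d\X, f(Y))\in\DDL{\alpha,\alpha\wedge 2\alpha}{m}{\infty}\subset\DL{\alpha,\alpha''}{m}{\infty}$ (using $2\alpha+\alpha''>1$ to meet the hypotheses), and the Itô integral of the bounded process $\sigma(Y)$ contributes a martingale in $C^\alpha L_{m,\infty}$; boundedness of $b$ handles the drift term trivially. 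This shows $\Phi$ maps an appropriate ball into itself once the time horizon is small, using the a priori bounds \eqref{est.Hzm}--\eqref{est.Hcondz} together with the composition estimates \eqref{est.cfy}--\eqref{est.cddf}, which all carry powers of $|t-s|$ that can be made small. Here the condition $\gamma>1/\alpha$ enters: it guarantees $\bar\alpha'=\min\{(\gamma-2)\alpha,\alpha''\}$ is large enough that $\alpha+(\alpha\wedge\bar\alpha)+\bar\alpha'>1$, so the rough stochastic integral is well-defined with the right remainder regularity.

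Next I would prove the contraction estimate. For two iterates $Y,\bar Y$ (with the same data $\xi,b,\sigma,f,f',\X$), apply \cref{lem.compose.stab} with $X=\bar X$, $f=\bar f$ to control $\|Z,Z';\bar Z,\bar Z'\|$ (where $Z=f(Y)$, $Z'=Df(Y)f(Y)+f'(Y)$, etc.) by $\|Y,Y';\bar Y,\bar Y'\|$ in the $\DL{\kappa,\kappa'}{m}{}$-type norm; then feed the difference $Z-\bar Z$ into the linearity of the rough stochastic integral and the Itô isometry / BDG for the $\sigma$-term and Lipschitz-ness of $b$. The upshot is an estimate of the form $\|\Phi(Y)-\Phi(\bar Y)\| \le C(T')\,\|Y-\bar Y\|$ on $[0,T']$ with $C(T')\to 0$ as $T'\to 0$, where the norm is the relevant SCRP distance. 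Choosing $T'$ small enough gives a unique fixed point on $[0,T']$ by Banach's theorem; \cref{prop.davie} identifies this fixed point with an $L_{m,\infty}$-integrable solution of \eqref{eqn.srde} in the sense of \cref{def.soln}, and \cref{cor.rintiscont} gives continuity of sample paths.

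Finally, for the global statement I would iterate the local construction: since the local existence time $T'$ depends only on the data through $\|\X\|_\alpha$, $\|b\|_\infty$, $\|\sigma\|_\infty$, $|f|_\gamma$ and the norms of $(f,f')$ and $(Df,Df')$ — all of which are fixed and, importantly, \emph{not} on the size of $\xi$ (because boundedness of $f$ and $\sigma,b$ means the a priori bounds on increments of $Y$ are uniform in the starting point, cf.\ \cref{rmk.JwhenBounded} and \cref{lem.basicstopping}) — one can restart from $Y_{T'}$ with the same time-step and cover $[0,T]$ in finitely many steps. Uniqueness on $[0,T]$ follows by concatenating local uniqueness, again via \cref{lem.uniq} and \cref{prop.davie}. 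The main obstacle, and the step that needs the most care, is verifying the self-mapping and contraction properties \emph{with uniform constants in $\xi$} in the two-parameter integrability scale $L_{m,\infty}$: one must track exactly how the composition lemma and the rough-stochastic-sewing estimates interact so that the $n=\infty$ slot is preserved and the contraction constant genuinely vanishes as $T'\downarrow 0$, which requires the strict inequalities $\gamma>1/\alpha$ and $2\alpha+\alpha''>1$ rather than their non-strict versions.
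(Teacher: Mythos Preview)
Your plan captures the right architecture---fixed point in an $L_{m,\infty}$-based SCRP space (so that \cref{rem:loss} does not bite), composition via \cref{lem.composecvec}, rough stochastic integration via \cref{thm.Hroughint}, stability via \cref{lem.compose.stab}, then patching---and matches the paper's overall strategy. Two interlocking refinements are, however, missing and would block the argument as you have written it.

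First, the paper does \emph{not} work at the sharp exponents $(\alpha,\alpha'')$: it fixes auxiliary $\beta\in(1/\gamma,\alpha)$ and $\beta'\in(0,\beta)$ with $2\beta+\beta'>1$ and $\beta'\le\min\{\alpha'',(\gamma-2)\alpha\}$, and runs the fixed point in $\DL{\beta,\beta'}{m}{\infty}$. The reason is that both the invariance bound on $\|\delta\phi\|_{\beta;m,\infty}$ and the contraction bound on $\|\delta\Xi'\|_{\beta';m}$ require strictly positive powers $T^{\alpha-\beta}$, $T^{1/2-\beta}$, $T^{\beta-\beta'}$ of the horizon to be made small; at $\beta=\alpha$ the rough integral contributes exactly $|t-s|^\alpha$ and the self-mapping into a fixed-radius ball fails, while at $\beta'=\beta$ the Gubinelli-derivative increment $\delta(f(Y)-f(\bar Y))$ carries a term of size $\|\delta(Y-\bar Y)\|_{\beta;m}$ with no small prefactor. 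Second---and this is the conceptual point the paper flags explicitly---the $L_{m,\infty}$-norm is too strong for contraction: the invariant set $\mathbf{B}_T$ is a ball in $\DL{\beta,\beta'}{m}{\infty}$, but the contraction is established in a \emph{weaker} metric $d$ built only from $L_m$-norms of $\delta Y-\delta\bar Y$, $\delta Y'-\delta\bar Y'$, $\E_\bigcdot R^Y-\E_\bigcdot R^{\bar Y}$, and one checks separately that $(\mathbf{B}_T,d)$ is complete before applying Banach's theorem. Your invocation of \cref{lem.compose.stab} is consistent with this (it outputs $L_m$-bounds), but the two-topology structure needs to be made explicit for the argument to close.
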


		\begin{remark} No integrability condition is required on $Y_0 = \xi$.
		Also, since $L_{m,\infty}$-integrability implies $L_{2,\infty}$-integrability, it is clear that uniqueness of $L_{m,\infty}$-solutions also holds within the wider class of $L_{2,\infty}$-solutions.
	\end{remark}
	\begin{corollary} \label{cor.expInt}
		Let $Y$ be the solution of \cref{thm.fixpoint}. Then $Y$ satisfies the exponential estimate \eqref{est.JohnNirenberg} (with $\abx=W$).
	\end{corollary}
	\begin{proof}
		From \cref{rmk.JwhenBounded}, $\delta Y$ belongs to $C^\alpha L_{m,\infty}$. Being a solution, $Y$ is a.s. continuous and hence the result is a direct consequence of \cref{prop.JNineq}.
	\end{proof}

		Our method deviates from the familiar one for rough differential equations (e.g \cite{FH20}*{Ch.8}) in several ways. The highly non-trivial part is to identify a suitable metric on the space of stochastic controlled rough paths for which a fixed-point theorem can be applied. All estimates, e.g.\ those obtained in \cref{thm.Hroughint} for rough stochastic integrals, have already been prepared in this way.
		As already alluded in \cref{rem:loss}, unless $n=\infty$, a loss of integrability (from $(\gamma-1)n$ down to $n$) appears in the estimates of \cref{lem.composecvec}.
		For this reason, the invariance property of the fixed point map needs to be established on a bounded set of $\D_X^{2\alpha}L_{m,\infty}$.
		As is quickly realized, however, the corresponding distance is too strong to yield any contraction property, which leads us to a weaker metric.

	\subsubsection*{Proof of \cref{thm.fixpoint}}
	By replacing $\gamma$ by $\gamma \wedge 3$ if necessary, we can assume that $2 \le 1/\alpha < \gamma \le 3$.
	We first construct a local solution, on $[0,T]$ for $T$ small. 
	It suffices to construct a process $(Y,f(Y))$ in $\D_X^{\beta,\beta'}L_{m,\infty}$ such that $Y$ is \( \P \)-a.s.\ continuous and the integral equation \eqref{eqn.int.srde} is satisfied for some $\beta\in(\frac 1 \gamma,\alpha)$, $\beta'\in(0,\beta)$ with%
    \begin{align*}
        2 \beta+\beta'>1 \tand 2 \beta+\alpha''>1,
    \end{align*}
    which entails in particular the (crucial) appearance of $T^\delta $ below, with 
    $$
    		\delta := (\alpha-\beta)\wedge(\beta-\beta')>0.
    $$
       Indeed, if $(Y,f(Y))$ is such a process, then due to \cref{lem.composecvec}, $(f(Y),Df(Y)f(Y)+f'(Y))$ belongs to $\D_X^{\beta,\beta''}L_{m,\infty}$, where $\beta''=\min\{\beta',(\gamma-2)\beta\}$.
		The conditions on $\beta, \beta'$ ensure that 
		$\alpha+\beta>1/2$ and $\alpha+\beta+\beta''>1$. 
		Hence, by \cref{prop.davie}, $Y$ is an $L_{m,\infty}$-integrable solution to \eqref{eqn.srde}.
		From the conditions on $\alpha,\alpha''$, we can further assume that $\beta'\le\min\{\alpha'',(\gamma-2)\beta\}$, so that $\beta''=\beta'$.

		Having $\beta,\beta'$ chosen as previously, 
we pick a constant
\[
M>\|b\|_\infty + \|\sigma\|_\infty + \|(f,f')\|_{2;\infty} + \bk{(f,f')}_{X;\beta,\beta';m,\infty}
\]
and define $\mathbf{B}_T$ as the collection of processes $(Y,Y')$ in $\D_X^{\beta,\beta'}L_{m,\infty}([0,T],\W;W)$ such that $Y_0=\xi, Y'_0=f_0(\xi)$,
		\begin{equation}\label{ball_BT}
			\|(Y,Y')\|_{X;\beta,\beta';m,\infty}\le M\,.
		\end{equation}
 			It is easy to see that for $T$ sufficiently small, the set $\mathbf{B}_T$ contains the process $t\mapsto (\xi+f_0(\xi)\delta X_{0,t},f_0(\xi))$, and hence, is non-empty.
			For each $(Y,Y')$ in $\mathbf{B}_T$, define
			\begin{equation}
			\label{fixed_point_map}
				\Phi(Y,Y')=\left(\xi+\int_0^\cdot b_r(Y_r)dr+\int_0^\cdot \sigma_r(Y_r)dB_r+\int_0^\cdot f(Y)d\X,f(Y) \right)\,.
			\end{equation}
			It is evident that both terms above are a.s.\ continuous and hence progressively measurable.
			We will now show that if $T$ is sufficiently small, $\Phi$ has a unique fixed point in $\mathbf{B}_T$, which is a solution to \eqref{eqn.srde}.

			\paragraph{Invariance.}
			We show that there is a choice of $T^*=T^*(M,\rho_{\alpha}(\X))$ such that $\Phi$ maps $\mathbf{B}_{T}$ into itself, for any \( T\le T^* \).
			Let $(Y,Y')$ be an element in $\mathbf{B}_T$ and for simplicity put $(Z,Z')=(f(Y),Df(Y)f(Y)+f'(Y))$ (this belongs to $\D_X^{\beta,\beta'}L_{m,\infty}$ by \cref{lem.composecvec}).
			Applying the BDG inequality and standard bounds for Riemann integrals, we have for the drift and diffusion terms
				\[
			\Big\|(\int b(Y)dr,0)\Big\|_{X;\beta,\beta';m,\infty}
			=\Big\|\int b(Y)dr\Big\|_{\beta;m,\infty} + \Big\|\EE\int b(Y)dr \Big\|_{\beta+\beta';\infty}
			\lesssim \|b\|_\infty T^{1-\beta-\beta'},
				\]
			\[
			\Big\|(\int \sigma(Y)dB,0)\Big\|_{X;\beta,\beta';m,\infty} = \Big\|\int \sigma(Y)dr\Big\|_{\beta;m,\infty} 
			\lesssim \|\sigma\|_\infty T^{\frac12-\beta}\,.
			\]

With $\delta =(\alpha-\beta)\wedge(\beta-\beta')>0$ as before, apply \cref{cor.integral} and \cref{lem.composecvec} to obtain that \footnote{We note that the argument here is different than the standard one for determinstic RDEs. There (see e.g. \cite{FH20})  a factor $T$ to some positive power derives from $\alpha > \beta$, regularity of $\X$ vs  H\"older-scale  of the crp space in which the Picard iteration takes place; this argument includes $ | \delta Y |_{\beta} \lesssim T^{\delta}$, whenever is $Y$ is controlled by $X$, thanks to $2\beta$-regularity of $R^Y$. In case of scrp, the remainer is only conditionally of order $2 \beta$. A different argument is thus needed, which we base on the multiscale structure of $(\alpha, \beta, \beta'')$ for $(\int Z d X, Z, Z')$; the $T^\delta$ then derives from the positive differences of these exponents.}
\begin{align*}
	\left\llbracket \int Z d \X, Z \right\rrbracket_{X ; \beta,
   \beta', m, \infty} 
   &\lesssim \left\llbracket \int Z d \X, Z
   \right\rrbracket_{X ; \alpha, \beta, m, \infty} T^{\delta}
   \\&\lesssim (\|Z\|_{\infty;\infty}+ \|(Z,Z')\|_{X ; \beta, \beta' ; m, \infty}) T^{\delta}
   \\&\lesssim    (\|(f,f')\| _{2;\infty}+\bk{(f,f')}_{X;\beta,\beta';m,\infty} )(1+\|(Y,Y')\|_{X;\beta,\beta';m,\infty}^2)T^{\delta} .
\end{align*}
Summing up the above contributions, we arrive at the bound
\[
\|\Phi(Y,Y')\|_{X;\beta,\beta',m,\infty}\le 
\|f\|_{\infty} + C (1+M^3)T^{\delta}
\]
for a constant \( C=C(T,\rho_{\alpha}(\X) ) \) which is non-decreasing in \( T \), and where \(\delta' :=\min\{\delta,1-\beta-\beta',\frac12-\beta\} >0\).
The above right hand side is indeed bounded above by \( M \) provided that 
\( T\le T^*:=(\frac{M-\|f\|_\infty }{C(1+M^3)})^{\frac1{\delta'}} \). This proves the desired property.

			\paragraph{Contraction.}
We suppose that \( M,T \) are chosen as in the previous step.
Taking \( T \) smaller if necessary, we now show that $\Phi$ is a contraction on $\mathbf{B}_T$, but for the associated \( L_{m,m} \)-metric $\bk{-;-}_{X;\beta,\beta';m}$ defined in \eqref{def.scrp.bracket} (as opposed to \( L_{m,\infty}\) as in the above proof of invariance).
Because the starting position is fixed at \((\xi,f_0(\xi))\), we have
\begin{align}\label{tmp.equi.metric}
\bk{(Y,Y');(\bar Y,\bar Y')}_{X;\beta,\beta';m}\asymp \norm{Y-\bar Y}_{\infty;m} + \|Y'-\bar Y'\|_{\infty;m} +\bk{(Y,Y');(\bar Y,\bar Y')}_{X;\beta,\beta';m} 
\end{align}
for all \( (Y,Y') ,(\bar Y,\bar Y') \in \mathbf B_T \), and hence, $\bk{-;-}_{X;\beta,\beta';m}$ indeed forms a distance on \( \mathbf B_T \).
Moreover, in view of \cref{prop.C2Lmn}, we see that the resulting space is closed and complete. 

Now, in keeping with the previous notations, we let \( (\bar Z,\bar Z')=(f(\bar Y),Df(\bar Y)\bar Y'+f'(\bar Y))\).
By  \eqref{ineq.super_additive}, we have 
\begin{multline}
\label{tmp.57}
\bk{\Phi(Y,Y');\Phi(\bar Y,\bar Y')}_{X;\beta,\beta';m}\le
 \bklr{\int_0^\cdot b_r(Y_r)dr,0;\int_0^\cdot b_r(\bar Y_r),0}_{X;\beta,\beta';m}
\\
+ \bklr{\int_0^\cdot \sigma_r(Y_r)dB,0;\int_0^\cdot \sigma_r(\bar Y_r)dB,0}_{X;\beta,\beta';m}
+\bklr{\int_0^\cdot  Zd\X,Z;\int_0^\cdot \bar Zd\X,\bar Z}_{X;\beta,\beta';m}
\end{multline}
and we can estimate each term separately.

For the last term, we estimate (with \(   \delta=(\alpha-\beta)\wedge(\beta-\beta') \) is as before)
\[
\bklr{\int_0^\cdot  Zd\X,Z;\int_0^\cdot \bar Zd\X,\bar Z}_{X;\beta,\beta';m}
\lesssim\bklr{\int_0^\cdot  Zd\X,Z;\int_0^\cdot \bar Zd\X,\bar Z}_{X;\alpha,\beta;m}T^{\delta}.
\]
The term $\bk{-;-}_{X;\alpha,\beta;m}$ is estimated via \cref{cor.integral} (with \( n:=m \) therein) and \cref{prop.compose_stab} (with $\alpha'=\alpha$, $\kappa=\beta$, $\kappa'=\beta'$), yielding 
\begin{equation*}
\begin{aligned}
	\bklr{\int_0^\cdot  Zd\X,Z;\int_0^\cdot \bar Zd\X,\bar Z}_{X;\beta,\beta';m}
&\lesssim \Big(\|Z-\bar Z\|_{\infty;m} +\|Z,Z';\bar Z,\bar Z'\|_{X;\beta,\beta';m}\Big)T^{\delta}
\\&
\lesssim \|Z,Z';\bar Z,\bar Z'\|_{X;\beta,\beta';m}T^{\delta}
\\&
\lesssim
\|Y,Y';\bar Y,\bar Y'\|_{X;\beta,\beta';m}T^{\delta}\,.
\end{aligned}
\end{equation*}
To go from first to second line, we have used the fact that \( Z_0=\bar Z_0 \) and thus \( \|Z-\bar Z\|_{\infty;m} \lesssim \|\delta Z-\delta \bar Z\|_{\beta;m} \lesssim \|Z,Z';\bar Z,\bar Z'\|_{X;\beta,\beta';m}\), by definition.
Next, the drift and diffusion terms are estimated as in the proof of invariance, noting this time that the right hand sides are proportional to the corresponding Lipschitz norms (as introduced in \cref{def.randomcoef}). 
Finally, inserting these contributions in \eqref{tmp.57}, using \eqref{tmp.equi.metric}, we obtain that
			\begin{equation}
			\label{previous_1}
				\bk{\Phi(Y,Y');\Phi(\bar Y,\bar Y')}_{X;\beta,\beta';m}
				\le
				C T^{\delta}
				\bk{Y,Y';\bar Y,\bar Y'}_{X;\beta,\beta';m}.
			\end{equation}
			where 
			the constant \( C \) depends only on
			\( \beta,\gamma,T_0,\|(Df,Df')\|_{\gamma;\infty} \), \( \bk{(Df,Df')}_{X;\alpha,\alpha'';m,\infty} \), $\rho_\alpha(\X)$, \( \|b\|_\lip \) and \( \|\sigma\|_\lip \). This proves that $\Phi$ is indeed a contraction if $T$ is sufficiently small.

		 \paragraph{Concluding the proof.}
				Picard's fixed point theorem asserts that we can find a unique process $(Y,Y')$ in $\mathbf{B}_T$ such that $\Phi(Y,Y')=(Y,Y')$. In particular, $(Y,f(Y))$ is a stochastic controlled rough path in $\D^{\beta,\beta'}_XL_{m,\infty}$ which satisfies equation \eqref{eqn.int.srde}.
		Because the smallness of $T$ only depends on $\rho_\alpha{\X}_{\alpha}$ and the norms of the coefficients $b,\sigma,f,f'$ but not on $\xi$, the previous procedure can be iterated to construct a unique solution in $\D_X^{\beta,\beta'}L_{m,\infty}$ over $[0,T_0]$, for any \( T_0>0 \).
		To show uniqueness, we observe from \cref{rmk.JwhenBounded} that if $\bar Y$ is a $L_{m,\infty}$-solution, then $(\bar Y,f(\bar Y))$ belongs to $\D_X^{2 \alpha}L_{m,\infty}$. It follows that $\bar Y$ belongs to $\mathbf{B}_T$ for $T$ sufficiently small. Since $\Phi$ is a contraction on $\mathbf{B}_T$, this shows that the $L_{m,\infty}$-solution is unique on small time intervals, which implies uniqueness on any finite time intervals.
		This proves the theorem.
\hfill\qed


\subsection{Continuous dependence on data}\label{sub.continuity_soln_map}
 We now establish the continuity of the solution to \eqref{eqn.srde} with respect to its full inputs data.
 At first reading, the reader may assume $\alpha = \beta = \beta' $ and $\gamma =3$, with possible focus on time-independent $f$ (which renders harmless all $\beta$ exponents).  In general, these exponents are needed to allow finer spatial regularity assumption on the vector fields, in interplay with their temporal regularity (and that of $\X$).
\begin{theorem}
		\label{thm.stability_precise}
		Let $\xi,\bar \xi$ be in $L_0(\cff_0)$; $\X,\bar \X$ be in $\CC^\alpha$, $\alpha\in(\frac13,\frac12)$; \( \sigma,\bar\sigma,b,\bar b \) be random bounded continuous functions; fix \( m\ge2 \), and parameters \( \gamma\in (2,3] ,\beta\in (0,\alpha] \) such that \(\alpha + (\gamma-1)\beta>1\).
		Consider
		$(f,f')\in \D^{2 \beta}_XL_{m,\infty}\C^\gamma_b$ such that $(Df,Df')$ belongs to $\D^{\beta,\beta'}_XL_{m,\infty}\C^{\gamma-1}_b$
		where $\beta'>0$ is taken so that
		\[
		1-\alpha-\beta <\beta'\le 1,
		\]
		and fix another stochastic controlled vector field $(\bar f,\bar f')\in \D^{\beta,\beta'}_{\bar X}L_{m,\infty}\C^{\gamma-1}_b$.
	 	Let $Y$ be an $L_{m,\infty}$-integrable solution to \eqref{eqn.srde} starting from $\xi$, and similarly denote by
	 	$\bar Y$ an $L_{m,\infty}$-integrable solution to \eqref{eqn.srde} starting from $\bar \xi$ with associated coefficients $(\bar \sigma,\bar f,\bar f',\bar b,\bar \X)$.
	 	Let $M$ be a constant such that 
	 	\begin{multline*}
		 	\rho_\alpha({\X})+\rho_\alpha({\bar\X})+\|b\|_{\lip}+\|\sigma\|_{\lip}
		 	\\+\|(f,f')\|_{\gamma;\infty}+\bk{(f,f')}_{X;2 \beta;m,\infty}+\bk{(Df,Df')}_{X;\beta,\beta';m,\infty}\le M.
	 	\end{multline*}
		We denote $\beta''=\min((\gamma-2)\beta,\beta')$ and
		\begin{multline*}
		 \theta=
		\rho_{\alpha,\beta}(\X,\bar\X) 
		+\sup_{t\in[0,T]}\|\sup_{x\in W}|b_t(x)-\bar b_t(x)|\|_m
		 \\
		+\sup_{t\in[0,T]}\|\sup_{x\in W}|\sigma_t(x)-\bar \sigma_t(x)|\|_m
		+\|(f-\bar f,f'-\bar f')\|_{\gamma-1;m}
		 +\bk{f,f';\bar f,\bar f'}_{X,\bar X;\beta,\beta'';m},
		\end{multline*}
		where  the notations are defined in \eqref{def.rho_metric}, \eqref{def.norms_scvf} and \eqref{def.bk_metric}.

		Then,
		 we have the estimate\footnote{We note that if $m \alpha>1$, then by Kolmogorov continuity theorem
		\[
			\|\sup_{t\in[0,T]}|\delta\tilde Y_{0,t}|\|_m\lesssim \|\delta\tilde Y\|_{\alpha;m}.
		\]}
		\begin{equation}\label{est.stability_precise}
			\|\sup_{t\in[0,T]}  |\delta Y_{0,t}-\delta \bar Y_{0,t}|\|_m
			+\|Y,Y';\bar Y,\bar Y'\|_{X,\bar X;\alpha,\beta;m;[0,T]}
			\lesssim\||\xi-\bar \xi|\wedge1\|_m+\theta,
		\end{equation}
		where the implied constant depends on $\alpha,\beta,\beta',\gamma,T$ and $M$.
	\end{theorem}

\begin{proof}
	Without loss of generality, we can and will assume that $1-\alpha-\beta <\beta'\le (\gamma-2)\beta$, in particular $\beta''=\beta'$.  We introduce the stochastic controlled rough path
\[
Z=Y'=f(Y), \quad
Z'=Df(Y)f(Y)+f'(Y)
\]
and similar for $(\bar Z,\bar Z')$.
Thanks to \cref{rmk.JwhenBounded},
we have that $(Y,f(Y))$ and $(\bar Y,\bar f(\bar Y))$ both belong to $\D^{\alpha,\beta}_{X}L_{m,\infty}\subset \D^{2 \beta}_{X}L_{m,\infty}$ and $\D^{\alpha,\beta}_{\bar X}L_{m,\infty}\subset \D^{\beta,\beta'}_{\bar X}L_{m,\infty}$ respectively.
		Consequently, \cref{lem.composecvec} implies that $(Z,Z')$ and $(\bar Z,\bar Z')$ belong to $ \D^{2 \beta}_XL_{m,\infty}$ and $ \D^{\beta,\beta'}_{\bar X}L_{m,\infty}$ respectively.

	Let $I$ be a sub-interval of $[0,T]$ and put $\Gamma_I=\sup_{t\in I}\|| Y_t-\bar Y_t|\wedge1\|_m$ .

	\textit{Step 1.} We put $R=\delta Y-Z\delta X$, $\bar R=\delta \bar Y-\bar Z\delta \bar X$ and $\tilde Y=Y-\bar Y$, $\tilde Z=Z-\bar Z$, $\tilde R=R-\bar R$.
	We show that if $|I|$ is small enough (depending on $M,\alpha,\beta,\beta',\gamma,T$), then
	\begin{align}
		\|Y,Y';\bar Y,\bar Y'\|_{X,\bar X;\alpha,\beta;m;I}
		\lesssim \|\tilde R\|_{1/2;m;I}|I|^{1/2-\alpha} + \Gamma_I+\theta.
		\label{est.YYbar_on_I}	
	\end{align}

	Indeed, in view of the identity $\delta \tilde Y=\tilde R+Z\delta X-\bar Z\delta \bar X $, we have
	\begin{align}
		\|\delta\tilde Y\|_{\alpha;m;I}
		&\lesssim \|\tilde R\|_{1/2;m;I}|I|^{1/2-\alpha}+\|Z-\bar Z\|_{\infty;m;I}+\theta
		\nonumber\\&\lesssim \|\tilde R\|_{1/2;m;I}|I|^{1/2-\alpha} 
		+\Gamma_I+\theta .\label{est.dY_on_I}
	\end{align}
	In the above, to estimate $\|Z-\bar Z\|_{\infty;m;I}$, we have used the following inequality which is valid for any random bounded Lipschitz function $h$:
	\begin{align}
		\|h(Y_t)-h(\bar Y_t)\|_m\lesssim( \|h\|_\infty+\|h\|_\lip ) \| |Y_t-\bar Y_t|\wedge1\|_m.
		\label{est.hYhYbar}
	\end{align}
	Combine with \eqref{est.cdz} (with $(\beta,\beta',\alpha,\alpha',\alpha''):=(\beta,\beta,\beta,\beta,\beta')$ therein so that $(\kappa,\kappa')=(\beta,\beta')$), we get
	\begin{align}
		\| \tilde Y'\|_{\beta;m;I}=\| \tilde Z\|_{\beta;m;I}
		\lesssim \|\tilde R\|_{1/2;m;I}|I|^{1/2-\beta}  +\Gamma_I+\theta.
		\label{est.dZ_on_I}
	\end{align}

	 Put 
	\begin{align*}
		J:=\int f(Y)d\X-Z\delta X-Z'\XX
		=R^Y - Z' \XX-\int b(Y)dr -\int \sigma(Y)dB
	\end{align*} and similarly for $\bar J$.
	From the identity  
	\begin{align}
		\tilde R=J-\bar J + Z'\XX- \bar Z'\bar \XX +\int (b(Y)-\bar b(\bar Y) ) dr+\int ( \sigma(Y)-\bar \sigma(\bar Y) ) dB ,
		\label{identity.dR}
	\end{align}
	we have by standard estimates (for moment norms) and \eqref{est.hYhYbar}, applied with $h \in \{b,\sigma \}$,
	\begin{align*}
		\|\EE\tilde R\|_{\alpha+\beta;m;I}\lesssim \|\EE(J-\bar J)\|_{\alpha+\beta+\beta';m;I}|I|^{\beta'} 
		+\Gamma_I +\theta.
	\end{align*}
	To estimate $\EE(J-\bar J)$, we apply \eqref{est.RSIE} (with $m=n$) and \eqref{est.zz} to see that 
	\begin{align*}
		\|\EE(J-\bar J)\|_{\alpha+\beta+\beta';m;I} 
		&\lesssim  \|Z,Z';\bar Z,\bar Z'\|_{X,\bar X;\beta,\beta';m;I}+
		 \rho_{\alpha,\beta}(\X,\bar\X)  
		\\&\lesssim  \|Y,Y';\bar Y,\bar Y'\|_{X,\bar X;\beta,\beta';m;I}+ \Gamma_I +\theta.
	\end{align*}
	Hence, we obtain
	\begin{align}\label{est.dR_on_I}
		\|\EE\tilde R\|_{\alpha+\beta;m;I}\lesssim \|Y,Y';\bar Y,\bar Y'\|_{X,\bar X;\beta,\beta';m;I}|I|^{\beta'} +\Gamma_I +\theta.
	\end{align}
	Summing up \eqref{est.dY_on_I}, \eqref{est.dZ_on_I} and \eqref{est.dR_on_I}, we have
	\begin{align*}
		\|Y,Y';\bar Y,\bar Y'\|_{X,\bar X;\alpha,\beta;m;I}
		\lesssim \|\tilde R\|_{1/2;m;I}|I|^{1/2-\alpha} +\|Y,Y';\bar Y,\bar Y'\|_{X,\bar X;\beta,\beta';m;I}|I|^{\beta'} +\Gamma_I +\theta.
	\end{align*}
	Noting that $\|Y,Y';\bar Y,\bar Y'\|_{X,\bar X;\beta,\beta';m;I}\le \|Y,Y';\bar Y,\bar Y'\|_{X,\bar X;\alpha,\beta;m;I}$, we obtain \eqref{est.YYbar_on_I} when $|I|$ is small enough.

	\textit{Step 2.} We show that if $|I|$ is small enough, then 
	\begin{align}
		\|\tilde R\|_{1/2;m;I}
		\lesssim  \Gamma_I+\theta.
		\label{est.R_on_I}
	\end{align}

	Applying \eqref{est.RSIE} (again with $m=n$) and \eqref{est.zz}, we have
	\begin{align*}
		\|J-\bar J\|_{\alpha+\beta;m;I}
		&\lesssim  \|Z,Z';\bar Z,\bar Z'\|_{X,\bar X;\beta,\beta';m;I} + \rho_{\alpha,\beta}(\X,\bar\X) 
		\\&\lesssim  \|Y,Y';\bar Y,\bar Y'\|_{X,\bar X;\beta,\beta';m;I} +\Gamma_I+\theta.
	\end{align*}
	Then we use \eqref{est.YYbar_on_I} to get that
	\begin{align}
		\|J-\bar J\|_{\alpha+\beta;m;I}
		\lesssim \|\tilde R\|_{1/2;m;I}|I|^{1/2-\alpha}+\Gamma_I+\theta.
		\label{est.JJbar_on_I}
	\end{align}
	On the other hand, 
	from \eqref{identity.dR}, we apply \eqref{est.hYhYbar} to see that (noting $\alpha+\beta>1/2$ from our assumptions)
	\begin{align*}
		\|\tilde R\|_{1/2;m;I}
		&\lesssim \|J-\bar J\|_{\alpha+\beta;m;I} + \|Z'-\bar Z'\|_{\infty;m}+ \Gamma_I + \theta
		\\&\lesssim \|J-\bar J\|_{\alpha+\beta;m;I} +\Gamma_I + \theta.
	\end{align*}
	Combining with \eqref{est.JJbar_on_I}, we obtain that 
	\begin{align*}
		\|\tilde R\|_{1/2;m;I}
		\lesssim \|\tilde R\|_{1/2;m;I}|I|^{1/2-\alpha}  + \Gamma_I+\theta.
	\end{align*}
	This yields \eqref{est.R_on_I} provided that $|I|$ is small enough.

	\textit{Step 3.} By combining \eqref{est.YYbar_on_I} and \eqref{est.R_on_I}, we have
	\begin{align*}
		\|Y,Y';\bar Y,\bar Y'\|_{X,\bar X;\alpha,\beta;m;I}
		\lesssim  \Gamma_I+\theta.
	\end{align*}
	We note that 
	\begin{align*}
		 \Gamma_I\lesssim \||\tilde Y_o|\wedge1\|_{m} + \|\delta \tilde Y\|_{\alpha;m;I}|I|^{\alpha}
		 \lesssim  \||\tilde Y_o|\wedge1\|_{m} + \|Y,Y';\bar Y,\bar Y'\|_{X,\bar X;\alpha,\beta;m;I}|I|^{\alpha}.
	\end{align*}
	Hence, if $|I|$ small enough, then 
	\begin{align}
		\|Y,Y';\bar Y,\bar Y'\|_{X,\bar X;\alpha,\beta;m;I}
		\lesssim  \||\tilde Y_o|\wedge1\|_{m} + \theta.
		\label{est.final_on_I}
	\end{align}

	\textit{Step 4.} We put $G_t=\||\tilde Y_t|\wedge1\|_m$. We deduce from the previous step that there is a constant $\ell>0$ such that whenever \( |t-s|\le \ell \), we have
\begin{align}
	\|Y,Y';\bar Y,\bar Y'\|_{X,\bar X;\alpha,\beta;m;[s,t]}\lesssim G_s+\theta. 
\label{key_est_stab}
\end{align}
On the other hand, it is evident that
\begin{align*}
	G_t-G_s\le   \|\delta\tilde  Y\|_{\alpha;m;[s,t]} (t-s)^\alpha \le \|Y,Y';\bar Y,\bar Y'\|_{X,\bar X;\alpha,\beta;m;[s,t]}|t-s|^\alpha. 
\end{align*}
It follows from the above estimates that   
\begin{align*}
	G_t-G_s\lesssim G_s|t-s|^\alpha + \theta 
\end{align*}
whenever $0\le t-s\le \ell$.
A standard argument implies that $G_T\lesssim G_0+ \theta$.

We plug this into \eqref{key_est_stab} to see that  obtain $\|Y,Y';\bar Y,\bar Y'\|_{X,\bar X;\alpha,\beta;m;[s,t]}\lesssim G_0+\theta$ whenever \(0\le t-s\le \ell \).
To obtain the corresponding estimate  on \([0,T] \), it suffices to repeat the second step in the proof of \cref{prop.apri} (details are omitted). 

From \eqref{est.zz}, we also have 
\begin{align*}
	\|Z,Z';\bar Z,\bar Z'\|_{X,\bar X;\beta,\beta';m;[0,T]}
	\lesssim \|Y,Y';\bar Y,\bar Y'\|_{X,\bar X;\alpha,\beta;m;[0,T]} + \theta\lesssim G_0+\theta.
\end{align*}
Finally, to estimate the supremum of the increments of \( \delta Y_{0,\cdot} - \delta \bar Y_{0,\cdot} \), we start from  the equation for $Y-\bar Y$, then apply \eqref{est.integral} and the previously obtained estimates for $\|Y,Y';\bar Y,\bar Y'\|_{X,\bar X;\alpha,\beta;m;[0,T]}$ and $\|Z,Z';\bar Z,\bar Z'\|_{X,\bar X;\beta,\beta';m;[0,T]}$  to the rough stochastic integrals, apply BDG inequality to the stochastic integrals and standard estimates to the drift terms. Details are omitted.
This completes the proof.
\end{proof}

	\subsection{Uniqueness at criticality} 
	\label{sub:uniqueness_when_}
	The result can be seen as extension of \cite[Theorem 3.6]{MR2387018} to the setting of RSDEs. Since deterministic RDEs with time-independent vector fields, as considered in \cite{MR2387018}, the counter examples given therein show that no improvement is possible.
		\begin{theorem}[Uniqueness]\label{thm.uniq.notloc}
			Suppose that $b,\sigma$ are random bounded Lipschitz functions, $(f,f')$ belongs to $ \D^{2 \alpha}_XL_{2,\infty}\C^{\gamma}_b$  and $(Df,Df')$ belongs to $ \D^{\alpha,(\gamma-2)\alpha}_XL_{2,\infty}\C^{ \gamma-1}_b$, where $\gamma=1/\alpha$.
			Let $\xi$ be in $L_0(\cff_0;W)$. Let $Y$ be an $L_{2,\infty}$-integrable solution on $[0,T]$ starting from $\xi$. Then $Y$ is unique in following the sense. If  $\bar Y$ is another $L_{2,\infty}$-integrable solution on $[0, T]$ starting from $\xi$ defined on the same filtered probability space $(\Omega,\cgg,\{\cff_t\},\mathbb P)$, then $Y$ and $\bar Y$ are indistinguishable.
		\end{theorem}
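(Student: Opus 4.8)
The plan is to replay the argument of \cref{thm.uniq.loc} with the localization layer removed: since $b,\sigma$ are globally bounded Lipschitz and $(f,f'),(Df,Df')$ have global $\C^{\gamma}_b$, resp.\ $\C^{\gamma-1}_b$, regularity, one may formally take the localizing stopping times to be $\tau_N\equiv T$ and drop the requirement that the solutions stay in a compact set. So let $Y,\bar Y$ be two $L_{2,\infty}$-integrable solutions on $[0,T]$ with $Y_0=\bar Y_0=\xi$, and write $\widetilde Y=Y-\bar Y$, $\widetilde Z=f(Y)-f(\bar Y)$, $\widetilde Z'=Df(Y)f(Y)-Df(\bar Y)f(\bar Y)+f'(Y)-f'(\bar Y)$, $R^{\widetilde Z}=\delta\widetilde Z-\widetilde Z'\delta X$. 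With $A_{s,t}=\widetilde Z_s\delta X_{s,t}+\widetilde Z'_s\XX_{s,t}$ I would set
\[
J_{s,t}=\delta\widetilde Y_{s,t}-\int_s^t[b_r(Y_r)-b_r(\bar Y_r)]\,dr-\int_s^t[\sigma_r(Y_r)-\sigma_r(\bar Y_r)]\,dB_r-A_{s,t},
\]
so that $J=J^Y-J^{\bar Y}$ is exactly the difference of the two stochastic Davie remainders \eqref{def.J} of $Y$ and $\bar Y$. The goal is to prove $Y\equiv\bar Y$ on $[0,T]$; uniqueness in the stated sense is then immediate.

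First I would assemble the structural ingredients. By \cref{rmk.JwhenBounded} (using that $b,\sigma$ are bounded and $(f,f')\in\mathbf D^{2\alpha}_XL_{2,\infty}\C^{\gamma}_b$), both $(Y,f(Y))$ and $(\bar Y,f(\bar Y))$ belong to $\mathbf D^{2\alpha}_XL_{2,\infty}$; then \cref{lem.composecvec} shows that $(Z,Z')=(f(Y),Df(Y)f(Y)+f'(Y))$ and its barred analogue also lie in $\mathbf D^{2\alpha}_XL_{2,\infty}$, noting $\gamma=1/\alpha$ so $\beta''=(\gamma-2)\alpha=1-2\alpha$. Next I would check that $J$ satisfies the hypotheses of \cref{lem.Davie_iteration} with $m=2$, $\eta=2\alpha$, $\kappa=0$ and $\Gamma_1=\Gamma_2=\Gamma_{s,t}:=\sup_{r\in[s,t]}\|\widetilde Y_r\|_2$. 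Condition \eqref{con.sll1} holds (with $\varepsilon=3\alpha-1>0$) because $J=J^Y-J^{\bar Y}$ and each remainder is controlled by \cref{prop.davie} applied with $\bar\alpha=\bar\alpha'=\alpha$. For \eqref{con.dJ} and \eqref{con.EdJ} one uses $\delta J_{s,u,t}=-\delta A_{s,u,t}=R^{\widetilde Z}_{u,t}\delta X_{s,u}+\delta\widetilde Z'_{u,t}\XX_{u,t}$ together with the composition-stability estimates obtained exactly as in Steps 1--3 of the proof of \cref{thm.stability_precise} (inequalities \eqref{tmp.443}--\eqref{tmp.stab.RZ}), specialized to $\bar b=b$, $\bar\sigma=\sigma$, $\bar f=f$, $\bar f'=f'$, $\bar\X=\X$ (so the coefficient-difference term $\theta$ vanishes) and $\beta=\alpha$, $\beta'=\beta''=(\gamma-2)\alpha$. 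These yield $\|\delta\widetilde Y_{s,t}\|_2+\|\delta\widetilde Z_{s,t}\|_2+\|R^{\widetilde Z}_{s,t}\|_2\lesssim\|J_{s,t}\|_2+\Gamma_{s,t}(t-s)^\alpha$, $\|\delta\widetilde Z'_{s,t}\|_2\lesssim\|J_{s,t}\|_2+\Gamma_{s,t}(t-s)^{(\gamma-2)\alpha}$ and $\|\E_sR^{\widetilde Z}_{s,t}\|_2\lesssim\|J_{s,t}\|_2+\Gamma_{s,t}(t-s)^{(\gamma-1)\alpha}$, whence $\|\delta J_{s,u,t}\|_2\lesssim\|J_{u,t}\|_2(t-s)^\alpha+\Gamma_{s,t}(t-s)^{2\alpha}$ and $\|\E_s\delta J_{s,u,t}\|_2\lesssim\|J_{u,t}\|_2(t-s)^\alpha+\Gamma_{s,t}(t-s)$.

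Then I would run the dyadic contradiction. Assume $\widetilde Y$ is not identically $0$ on $[0,T]$; since $\widetilde Y_0=0$ and $t\mapsto\|\widetilde Y_t\|_2$ is continuous, for $k\ge k_0$ large one can choose a strictly decreasing sequence $\{t_k\}_{k\ge k_0}$ in $[0,T]$ with $\|\widetilde Y_{t_k}\|_2=2^{-k}$ and $\|\widetilde Y_t\|_2<2^{-k}$ for $0<t<t_k$, so $\Gamma_{t_{k+1},t_k}\le 2^{-k}$. Applying \cref{lem.Davie_iteration} on each $\Delta(t_{k+1},t_k)$ in the critical branch $\kappa=0$ (which supplies the logarithmic bound) and combining with $\|\delta\widetilde Y_{t_{k+1},t_k}\|_2\ge\|\widetilde Y_{t_k}\|_2-\|\widetilde Y_{t_{k+1}}\|_2=2^{-k-1}$ and $\|\delta\widetilde Y_{s,t}\|_2\lesssim\|J_{s,t}\|_2+\Gamma_{s,t}(t-s)^\alpha$ gives
\[
2^{-k-1}\lesssim 2^{-k}\big(1+k+|\log(t_k-t_{k+1})|\big)(t_k-t_{k+1})+2^{-k}(t_k-t_{k+1})^\alpha
\]
for $k$ large, which forces $t_k-t_{k+1}\gtrsim(1+k)^{-1}$ and hence $\sum_{k\ge k_0}(t_k-t_{k+1})=\infty$, contradicting $t_k\in[0,T]$. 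Therefore $Y=\bar Y$ on $[0,T]$.

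I expect the main obstacle to be the verification of the hypotheses of \cref{lem.Davie_iteration} in this critical regime: with $\gamma=1/\alpha$ all the relevant ``excess'' exponents sit exactly at the threshold, namely $\alpha+(\gamma-1)\alpha=1$ and $2\alpha+(\gamma-2)\alpha=1$, so the ordinary Davie--Gr\"onwall estimate of \cref{thm.stability_precise} is unavailable and one must land precisely in the $\kappa=0$ branch and exploit its logarithmic gain. Carrying the composition-stability bounds through (with $\theta=0$, in plain $L_2$-norms), and checking that none of the implicit constants depends on the solutions beyond the a priori control already in hand, is where the care is needed; the contradiction step itself is then a routine divergence-of-$\sum(t_k-t_{k+1})$ argument.
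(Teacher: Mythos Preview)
Your proposal is correct and follows essentially the same route as the paper: the paper explicitly states that the proof of \cref{thm.uniq.notloc} is ``almost identical'' to that of \cref{thm.uniq.loc}, with the localization removed by formally setting $\tau_N\equiv T$, which is precisely what you do. One cosmetic slip: with the forward germ $A_{s,t}=\widetilde Z_s\delta X_{s,t}+\widetilde Z'_s\XX_{s,t}$ one has $-\delta A_{s,u,t}=R^{\widetilde Z}_{s,u}\delta X_{u,t}+\delta\widetilde Z'_{s,u}\XX_{u,t}$ (indices $(s,u)$, not $(u,t)$), so the resulting bound reads $\|J_{s,u}\|_2$ rather than $\|J_{u,t}\|_2$; this is immaterial for \cref{lem.Davie_iteration}, which only uses $\sup_{[r,v]\subset[s,t]}\|J_{r,v}\|_2$.
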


	\subsubsection{Davie--Gr\"onwall-type lemma} 
	\label{sub.davie_gronwall_type_lemmas}
	We record an auxiliary result allowing to compare integral remainders as in \cref{thm.Hroughint} when the value of the exponents therein is critical. It is inspired by \cite[Thm 3.6]{MR2387018} and \cite[Thm 2.1]{le2018stochastic}.
	In essence, Davie's inductive argument is replaced by a decomposition  which allows exploiting BDG inequality.
	\begin{lemma}\label{lem.Davie_iteration} Let $T,\alpha,\eta,\varepsilon$ be positive numbers and $C,G,\Gamma_1,\Gamma_2$ be nonnegative numbers such that $\eta\in(\frac12,1]$ and $\alpha+\eta>1$.
    Assume that $J$ is an $L_m$-integrable process indexed by $\Delta$ such that
		\begin{gather}
			\|J_{s,t}\|_m\le C|t-s|^{\eta}
			\,,\quad\|\E_sJ_{s,t}\|_m\le C|t-s|^{1+\varepsilon},\label{con.sll1}
			\\\|\delta J_{s,u,t}\|_m\le G\Big(\sup_{[r,v]\subset[s,t]}\|J_{r,v}\|_m\Big)|t-s|^{\alpha}+ \Gamma_2|t-s|^\eta\label{con.dJ}
			\\\shortintertext{and}\label{con.EdJ}
		 \|\E_s \delta J_{s,u,t}\|_m\le G\Big(\sup_{[r,v]\subset[s,t]}\|J_{r,v}\|_m\Big)|t-s|^\alpha+\Gamma_1|t-s|
		\end{gather}
		for every $(s,u,t)$ in $\Deltatwo$.
		Then, there exist positive constants $c=c(\varepsilon,\eta,\alpha,m)$ and $\ell=\ell(\varepsilon,\eta,\alpha,m,G)$ such that
		for every $(s,t)\in \Delta$ with \( |t-s|\le \ell \)
		\begin{align}\label{est.SDavie}
			\|J_{s,t}\|_m\le c
		\Gamma_1\left(1+|\log\frac{\Gamma_1}C|+|\log{(t-s)}|\right)(t-s)+\Gamma_2(t-s)^\eta.
		\end{align}
	\end{lemma}
	\begin{proof}
All the implicit constants in our estimates below depend only on $\varepsilon, \eta,\alpha, m$.

For each integer $k\ge0$, let $\cpp_k$ denote the dyadic partition of $[s,t]$ of mesh size $2^{-k}|t-s|$.
		By triangle inequality, we have
	    \begin{align*}
	      \|J_{s,t}\|_m
	      \le
	      \big\|\sum\nolimits_{[u,v]\in\cpp_j}J_{u,v} \big\|_m+\big\|J_{s,t}-\sum\nolimits_{[u,v]\in\cpp_j}J_{u,v}\big\|_m.
	    \end{align*}
	    We estimate the first term using BDG inequality and condition \eqref{con.sll1}. This yields
	     \begin{align*}
	      \|\sum\nolimits_{[u,v]\in\cpp_j}J_{u,v}\|_m
	      &\lesssim\sum\nolimits_{[u,v]\in\cpp_j}\|\E_uJ_{u,v}\|_m+\left(\sum\nolimits_{[u,v]\in\cpp_j}\|J_{u,v}\|_m^2\right)^{1/2}
	     \\& \lesssim C  2^{- j \varepsilon}(t-s)^{1+\varepsilon}+C2^{-j(\eta-\frac12)}(t-s)^\eta.
	    \end{align*}
	    For the second term, we derive from \cite[id. (3.17)]{LeSSL2} (see also \cite[id. (2.47)]{le2018stochastic})  and BDG inequality  that for \( j\ge1 \)
	    \begin{align*}
	    	&\|J_{s,t}-\sum\nolimits_{[u,v]\in\cpp_j}J_{u,v}\|_m
	    	=\|\sum\nolimits_{k=0}^{j-1}\sum\nolimits_{[u,v]\in\cpp_k}\delta J_{u,(u+v)/2,v}\|_m.
	    	\\&\lesssim\sum\nolimits_{k=0}^{j-1}\sum\nolimits_{[u,v]\in\cpp_k}\|\E_u\delta J_{u,(u+v)/2,v}\|_m +\sum\nolimits_{k=0}^{j-1}\left(\sum\nolimits_{[u,v]\in\cpp_k}\|\delta J_{u,(u+v)/2,v}\|_m^2 \right)^{\frac12}.
	    \end{align*}
	    Applying \eqref{con.EdJ} and \eqref{con.dJ}, we have
	    \begin{align*}
	       \sum\nolimits_{[u,v]\in\cpp_k}\|\E_u\delta J_{u,(u+v)/2,v}\|_m
	      &\lesssim G 2^{-k(\alpha+ \eta-1)}\|J\|_{\eta;m}(t-s)^{\alpha+\eta} + 2^{-k\kappa}\Gamma_1(t-s)^{1+\kappa}
	    \end{align*}
	    and
	    \begin{gather*}
	    	\left(\sum\nolimits_{[u,v]\in\cpp_k}\|\delta J_{u,(u+v)/2,v}\|_m^2\right)^{\frac12}
	    	\lesssim G 2^{-k(\alpha+\eta-\frac12)}\|J\|_{\eta;m}(t-s)^{\alpha+\eta}
	    	+ 2^{-k(\eta-\frac12)}\Gamma_2(t-s)^\eta.
	    \end{gather*}
	    Summing in $k$, noting that $\alpha+\eta>1$ and $\eta>\frac12$, we have
	    \begin{equation*}
	      \big\|J_{s,t}-\sum\nolimits_{[u,v]\in\cpp_j}J_{u,v}\big\|_m\lesssim G  \|J\|_{\eta;m}(t-s)^{\alpha+\eta}  + j\Gamma_1(t-s)+\Gamma_2(t-s)^\eta.
	    \end{equation*}

	    From here,  we obtain that for every integer $j\geq1$ and every  $(s,t)\in \Delta$
	    \begin{equation}
		\label{tmp.Jst}
		    \|J\|_{\eta;m}\lesssim C 2^{-j \varepsilon}(t-s)^{1+\varepsilon- \eta}+C2^{-j(\eta-\frac12)}+G\|J\|_{\eta;m}(t-s)^\alpha+j\Gamma_1(t-s)^{1+\kappa- \eta}+\Gamma_2.
	    \end{equation}
	    For \( |t-s|\le \ell\) with sufficiently small $\ell$,  this gives
	    \begin{equation}\label{tmp.Lst}
		    \|J\|_{\eta;m}\lesssim C
			2^{-j \left(\varepsilon\wedge(\eta-\frac12)\right)}
			+j\Gamma_1(t-s)^{1+\kappa- \eta}+\Gamma_2.
	    \end{equation}

To conclude, we consider two cases.
	If \(j_0:= \varepsilon^{-1}\vee(\eta-\frac12)^{-1}\log_2 \frac{C}{\Gamma_1(t-s)^{1- \eta}}<2\), we have $C\lesssim \Gamma_1(t-s)^{1- \eta}$ and choose $j=1$. Then from \eqref{tmp.Lst}, we have
	\begin{align*}
	 	\|J\|_{\eta;m}\lesssim \Gamma_1 (t-s)^\eta +\Gamma_2,
	 \end{align*}
	which yields \eqref{est.SDavie}.
	If $j_0\ge2$, we  choose $j=\lfloor j_0\rfloor$ so that \(C 2^{-j\left(\varepsilon\wedge(\eta-\frac12)\right)}\le2 \Gamma_1(t-s)^{1- \eta} \) and thus from \eqref{tmp.Lst},
    \begin{align*}
	    \|J\|_{\eta;m}
	    &\lesssim\Gamma_1(1+j_0)(t-s)^{1- \eta}+\Gamma_2
	    \\&\lesssim \Gamma_1\left (1+|\log\frac{\Gamma_1}C|+|\log{(t-s)}|\right )(t-s)^{1- \eta}+\Gamma_2,
    \end{align*}
	    which implies \eqref{est.SDavie}.
This finishes the proof.
	\end{proof}

		\subsubsection{Proof of \cref{thm.uniq.notloc}}
			We hinge on \cref{lem.Davie_iteration}.

			From \cref{rmk.JwhenBounded} and boundedness of the coefficients,
			we see that $(Y,f(Y))$ belongs to $ \D^{2 \alpha}_{X}L_{2,\infty}$. Similarly, $(\bar Y,f(\bar Y))$ belongs to $ \D^{2 \alpha}_{X}L_{2,\infty}$.

			We denote $\tilde Y=Y- \bar Y$ and
			 \[ \tilde Z:=f(Y)- f(\bar Y),\quad
					 \tilde Z'=Df(Y)f(Y)-Df(\bar Y)f(\bar Y)+f'(Y)-f'(\bar Y)
			 \]
			 and $R^{\tilde Z}=\delta \tilde Z-\tilde Z' \delta X$.
			For each $s\le t$, we further denote
			\[
				A_{s,t}=\tilde Z_s \delta X_{s,t}+\tilde Z'_s\XX_{s,t}
			\]
			and
			\begin{align}
				J_{s,t}
				&=\delta\tilde Y_{s,t}-\int_s^t[b_r(Y_r)-b_r(\bar Y_r)]dr-\int_s^t[\sigma_r(Y_r)-\sigma_r(\bar Y_r)]dB_r-A_{s,t}\label{id1}
				\\&=R^{\tilde Y}_{s,t}-\int_s^t[b_r(Y_r)-b_r(\bar Y_r)]dr-\int_s^t[\sigma_r(Y_r)-\sigma_r(\bar Y_r)]dB_r
				\label{id2}
				 -\tilde Z'_s\XX_{s,t}.
			\end{align}
			We now verify that $J$ satisfies the hypotheses of \cref{lem.Davie_iteration} with $m=2$ and every fixed but arbitrary $T>0$.
			First, it follows from \cref{lem.composecvec}  that $(Z,Z^{\prime}),(\bar Z,\bar Z^{\prime})$ belong to $ \D^{2 \alpha}_{X}L_{2,\infty}$.
			Hence, the inequalities in \eqref{est.defJex} hold with $\bar \alpha=\bar \alpha'=\alpha$, showing that \eqref{con.sll1} holds with $\eta=2 \alpha$.
			Define $\Gamma_{s,t}=\sup_{r\in[s,t]}\|\tilde Y_r\|_2$.
			Reasoning as in steps 2 and 3 in the proof of \cref{prop.compose_stab}
			, we see that
			\begin{align}\label{est.tiY}
				\|\delta\tilde Y_{s,t}\|_2+ \|\delta \tilde Z_{s,t}\|_2+\|R^{\tilde Z}_{s,t}\|_2\lesssim\|J_{s,t}\|_2+\Gamma_{s,t}(t-s)^\alpha,
			\end{align}
			\begin{align}
				\|\delta \tilde Z^{\prime}_{s,t}\|_2\lesssim \|J_{s,t}\|_2+\Gamma_{s,t}(t-s)^{(\gamma-2)\alpha}
			\end{align}
			and
			\begin{align}
				\|\E_sR^{\tilde Z}_{s,t}\|_2\lesssim\|J_{s,t}\|_2+\Gamma_{s,t}(t-s)^{(\gamma-1)\alpha}.
			\end{align}
			Since $\delta J_{s,u,t}=-\delta A_{s,u,t}=R^{\tilde Z}_{s,u}\delta X_{u,t}+\delta \tilde Z'_{s,u}\XX_{u,t}$,
			it follows from the above inequalities 
			This shows that $J$ satisfies \eqref{con.dJ} and \eqref{con.EdJ}.

			We view $\tilde Y$ as an element in $CL_2$ and suppose that $\tilde Y\neq0$ on $[0,T]$. Since $\tilde Y_0=0$ and $\tilde Y$ belongs to $C^\alpha L_2$, for $k_0$ sufficiently large, we can find a strictly decreasing sequence $\{t_k\}_{k\ge k_0}$ in $[0,T]$ such that for each $k$, $\|\tilde Y_t\|_2<2^{-k}$ for $0<t<t_k$ and $\|\tilde Y_{t_k}\|_2= 2^{-k}$.
			Since $\tilde Y$ is $L_2$-integrable, we have that $\Gamma_0:=\sup_{t\in[0,T]}\|\tilde Y_t\|_2$ is finite.
			The previous argument shows that for each $k$, $J$ satisfies \eqref{con.sll1}-\eqref{con.EdJ} on $\Delta([t_{k+1},t_k])$  with $m=2$, $\eta=2 \alpha$ and $\Gamma=\Gamma_{t_{k+1},t_k}=\sup_{t\in[t_{k+1},t_k]}\|\tilde Y_t\|_2$. Hence, by \cref{lem.Davie_iteration}, we can find an $\ell>0$, which may depend on $N$, such that
			\begin{multline*}
				\|J_{t_{k+1},t_k}\|_2\lesssim \Gamma_{t_{k+1},t_k}\Big (1+|\log{\Gamma_{t_{k+1},t_k}}|+|\log{(t_k-t_{k+1})}|\Big )(t_k-t_{k+1})
				\\
				+\Gamma_{t_{k+1},t_k}(t_k-t_{k+1})^{2 \alpha}
			\end{multline*}
			for every $k$ sufficiently large so that $t_k-t_{k+1}\le \ell$.
			We now observe that $\Gamma_{t_{k+1},t_k}\le 2^{-k}$, $\|\delta \tilde Y_{t_{k+1},t_k}\|_2\ge\|\tilde Y_{t_k}\|_2- \|\tilde Y_{t_{k+1}}\|_2= 2^{-k-1}$ and take into account \eqref{est.tiY} to obtain that
			\begin{align*}
				2^{-k-1}\le \|\delta\tilde Y_{t_{k+1},t_k}\|_2
				\lesssim 2^{-k}\Big(1+k+|\log{(t_k-t_{k+1})}|\Big)(t_k-t_{k+1})+2^{-k}(t_k-t_{k+1})^{\alpha}.
			\end{align*}
			This implies $t_k-t_{k+1}\ge C(1+k)^{-1}$ for some constant $C>0$. Hence, we have $\sum_{k\ge k_0}(t_k-t_{k+1})=\infty$, which is a contradiction.
			It follows that $Y_t=\bar Y_t$ a.s.\ for each $t\in [0,T]$. Since both processes are a.s.\ continuous, they are indistinguishable. \hfill\qed

\subsection{Rough It\^o formula} \label{sec:ito}
Let us start with a digression on the main integrability result \cref{thm.Hroughint}.
Herein we let \( \beta\in (0,\alpha] .\) 
\subsubsection{Extended stochastic controlled rough paths}
While the space \( \D_X^{\beta,\beta'}L_{m,n} \) was needed to address solvability results for SRDEs, when dealing with integration and composition purposes it is enough to use a slightly larger class of stochastic processes, obtained simply by replacing $\delta Z'$ in \cref{def.SCRP}-\ref{good2} by its averaged-type analogue, that is $\E_{\bigcdot}\delta Z'$. 

Suppose that $Z\colon \Omega\times I\to W$ and $Z'\colon\Omega \times I\to \mathcal L(V,W)$
are $\{\mathcal F_t\}$-progressively measurable
and such that
\begin{equation}\label{nota.Gamma}
\Gamma^{\beta,\beta';m,n}(Z,Z';I)
:=
\|\delta Z\|_{\beta;m,n;I} +\sup_{r\in I}\| Z'_r\|_n
+\|\E_{\bigcdot} R^Z\|_{\beta+\beta';n;I} +\|\E_{\bigcdot}\delta Z'\|_{\beta';m,n;I}
\,<\infty
\end{equation}
where \(R^Z_{s,t}=\delta Z_{s,t}-Z'_s \delta X_{s,t}\).
Recalling the notations \eqref{nota.Gamma_1}-\eqref{nota.Gamma_2}, it is clear from that definition that
\[
\Gamma_1^{\beta, \beta';m,n}(\X,\delta Z',R^Z;I)\vee
\Gamma_2^{\beta,\beta';m,n}(\X,\delta Z',R^Z;I)
\lesssim \Gamma^{\beta,\beta';m,n}(Z,Z';I),
\]
where the implicit constant depends on \( \rho_{\alpha,\beta}({\X}) \).
This asserts in particular that \( \int Zd\X \) is well-defined, in the sense of \cref{thm.Hroughint}.
Our preliminary discussion motivates the next definition.
	\begin{definition}[Extended stochastic controlled rough paths]
	\label{def.ESCRP}
		We say that $(Z,Z')$ is an \textit{extended stochastic controlled rough path} of $(m,n)$-integrability and $(\beta,\beta')$-H\"older regularity with values in $W$ with respect to $\{\cff_t\}$
		if \ref{good1}, \ref{goodgood}, \ref{itm:remainder} of \cref{def.SCRP} hold together with
		\begin{enumerate}[label=(\alph*')]
\setcounter{enumi}{3}
		 	\item\label{escrp_prime} $\sup_{t\in I}\|Z'_t\|_n$ is finite and $\E_\bigcdot\delta Z'$ belongs to $C^{\beta'}_2L_{n}([0,T],\W;\mathcal L(V,W))$;
		\end{enumerate}
	The class of such processes is denoted by $\DD_X^{\beta,\beta'}L_{m,n}([0,T],\W;W)$, or simply
	 $\DD_X^{\beta,\beta'}L_{m,n}$.
	\end{definition}

\subsubsection{Main result and discussion}
We now prove a {\em rough (stochastic) It\^o formula}, to be compared with the classical It\^o formula and the rough It\^o formula \cite[Ch.7]{FH20}.
We call {\em rough It\^o process} any continuous adapted process with dynamics,
\begin{equation} \label{equ:rIto0}
 dY_t (\omega) = b_{t, \omega} dt + \sigma_{t, \omega} d B_t + (Y',Y'')_{t, \omega} d \X_t,
\end{equation}
provided this makes sense (in integral form), with the final term is understood in the sense of rough stochastic integration (\cref{thm.Hroughint}).
Our aim is to show, for $t \in [0,T]$ and with probability one,
\begin{align}
\varphi \left( Y_{t}\right)
& -
\varphi \left( Y_{0}\right)
    - \int_{0}^{t} D \varphi \left(Y_{s}\right) 
    \sigma_{s, \omega} d B_{s}
    - \int_0^t (\mathcal L_{s, \omega} \varphi) \left(Y_s\right) d s   \nonumber \\
&\qquad = \label{equ:rIto}
   \int_{0}^{t} D \varphi \left(Y_{s}\right) 
    Y'_s d \X _{s}  %
  + \frac{1}{2}\int_{0}^{t}D^{2} \varphi\left( Y_{s}\right) \bigl(Y_{s}^{\prime }, Y_{s}^{\prime }\bigr)\,d [ \X ]_{s}    \\
&\qquad =   \label{equ:rStrat}
\int_{0}^{t} D \varphi \left(Y_{s}\right) 
    Y'_s \circ d \X_s %
    - \frac{1}{2} \int_{0}^{t} D \varphi \left(Y_{s}\right) 
    Y''_s d [\X]_{s},
\end{align}
for sufficiently regular $\varphi$, with
\[
	(\mathcal L_{s, \omega} \varphi) (y) := b_{s, \omega} \cdot D \varphi (y) + \tfrac{1}{2} (\sigma \sigma^\dagger)_{s, \omega}: D^2 \varphi (y )
=: b_{s, \omega} \cdot D \varphi (y) + \tfrac{1}{2} a_{s, \omega}: D^2 \varphi (y ),
\]
and rough path bracket $[\X] \equiv (\delta X)^{\otimes 2}- 2\sym(\XX)$, as defined in \cite[Ex. 2.11]{FH20}. We also wrote $\circ d \X \equiv d \X^g$ to denote (stochastic) rough integration against the ``geometrification'' of $\X = (X, \XX)$, explicitly given by $\X^g := (X, \mathrm{Anti}(\XX) +  (\delta X)^{\otimes 2} / 2)$, where  \( \sym(\XX), \mathrm{Anti}(\XX) \) denotes the (resp. anti-)symmetric part of $\XX$; pointwise in $V \otimes V$, cf. Definition \ref{def.RP}. In case of geometric $\X$, we have $[\X] \equiv 0$ and there is no difference between \eqref{equ:rIto} and \eqref{equ:rStrat}.

\begin{theorem}[Rough It\^o] \label{thm.rIto} Let $b,\sigma$ be bounded, progressively measurable,
$\X=(X,\XX) \in \mathscr{C}^\alpha$ for some $\alpha\in (\frac13,\frac12]$, and consider a test-function $\varphi \in \C^\gamma_b$ for some \( \gamma\in(\frac1\alpha,3] \).
Suppose that \( \|Y'_0\|_4<\infty \) and let the pair $(Y',Y'')$ be an extended stochastic controlled rough path in \(\DD_{X}^{\beta,\beta'} L_{4,n}\), for some parameters \( n>4,\) \(0<\beta' \le \beta\le \alpha\), $\beta''=\min\{\alpha(\gamma-2),\alpha(\tfrac n4-1),\beta'\}$, subject to the conditions%
\footnote{At first reading, take \( \gamma=3 \), \( n=\infty \) and \( \alpha=\beta=\beta' \); then \eqref{parameters_ito} is implied by the condition \( \alpha>\frac13 \).}
\begin{equation}
\label{parameters_ito}
\alpha+\beta>\frac12 \quad \text{and}\quad
\alpha+\beta+\beta''>1\,.
\end{equation}
Define for each $y$,%
\footnote{Also write $(\T_{t,\omega} \varphi, \T'_{t,\omega} \varphi)$ to emphasize the progressive nature of this process.}
$$
(\T \varphi, \T' \varphi) (y)  :=
\Big(D\varphi\left( y\right) Y^{\prime }, \ D^2 \varphi \left( y\right) (Y^{\prime },Y^{\prime })
+ D \varphi \left( y \right) Y^{\prime \prime }\Big).
$$
Then, $(\T \varphi, \T' \varphi) (Y)$ belongs to $\DD_{X}^{\beta, \beta''} L_{2,2}$ and  the rough stochastic It\^o formulas \eqref{equ:rIto}, \eqref{equ:rStrat} hold.
In these formulas, the integral in
$d\X$ is the rough stochastic integral $\int (\T \varphi, \T' \varphi) (Y) d \X$ and bracket integrals are Young integrals, with mesh limit taken in $L_2$. 
\end{theorem}

Examples of rough It\^o processes, to which this It\^o formula is applicable, includes
general RSDEs solutions (as provided by \cref{thm.fixpoint}) with
\[
\begin{aligned}
&b_{t, \omega} = b_t (\omega, Y_t (\omega)),\quad
 \sigma_{t, \omega}  = \sigma_t (\omega, Y_t (\omega)),
\\
& (Y',Y'')_{t, \omega} = \Big(f_{t}\big(\omega,Y_t (\omega)\big), ((Df_{t})f_{t}+ f_{t}')\big(\omega,Y_t (\omega)\big)\Big).
\end{aligned}
\]
This setting also accommodates McKean–Vlasov equations with rough common noise, in which case $b_{t,\omega} = \tilde b_t ( \omega, Y_t (\omega)) = b_t (\omega, Y_t (\omega), \mu_t)$ where $\mu_t$ is the law of $Y_t$, denoted by $\law(Y_t ; \X)$, and similar for the other coefficient fields. (Well-posedness of such rough McKean--Vlasov equations is treated in \cite{friz2025McKean}; our point here is only that solutions are rough It\^o processes, hence amenable to It\^o's formula.)
%

We see many potential applications of  \eqref{equ:rIto}, and various extensions thereof, 
notably in the area of rough (stochastic) PDEs and rough (doubly stochastic) BSDE, also in mean-field situations, all left to subsequent investigations. 
That said, we showcase a concrete use of \cref{thm.rIto} in revisiting some concepts (martingale problem, Fokker-Planck equation)
that will be familiar to many readers with stochastic analysis background. For any sufficiently nice test function $\varphi$, 
Theorem \ref{thm.rIto} allows to define a martingale,
\begin{equation} \label{equ:roughMP}
      M^\varphi_t  := \varphi \left( Y_{t}\right) - \varphi \left( Y_{0}\right)
    - \int_0^t (\mathcal L_{s, \omega} \varphi) \left(Y_s\right) d s   \\
  - \int_{0}^{t} (\T_{s, \omega} \varphi, \T'_{s, \omega} \varphi) (Y_s) d \X _{s},
\end{equation}
where we have assumed that $\X$ is geometric (for simplicity only, otherwise carry along a $d [\X]$- integral, see Remark \ref{rem.nongeo} below for details).
We say that $Y=Y(\omega)$ solves the {\em rough martingale problem},
$\mathrm{RMP}(\mathcal L; \T, \T';\X)$.
Mind that all coefficients fields are progressive and we are far from a Markovian situation.

 Even so, we can see that the flow of probability laws of $Y_t$ is
 measure-valued solution to an {\em effective} rough Fokker--Planck equation. To this end, define effective Markovian characteristics, i.e. (measurable) functions given by 
 \[
 	\bar b_t (y) := \E ( b_{t,\omega} | Y_t = y),
	\enskip
	\bar a_t (y) := \E ( a_{t,\omega} | Y_t = y),
 \]
 with effective $(\LL_t \varphi) (y) = \E ( \mathcal L_{t,\omega} \varphi) (y) | Y_t = y) $, equivalently defined as $\mathcal L_{t,\omega}\varphi$ above, but using the effective data $(\bar b_t, \bar a_t)$, which we may assume jointly measurable (cf. Proposition 5.1 in \cite{Brunick2013}.)
 
 We further define $(\TT_t \varphi) (y) = \E( ( \T_{t,\omega} \varphi) (y) | Y_t = y) $ and similarly $\TT'$.

 \begin{theorem} \label{thm:mimick0} Let $Y$ be a rough It\^o process of the form \eqref{equ:rIto0}, for some geometric rough path $\X$, subject to the condition of  \cref{thm.rIto}. Then the flow of (deterministic) probability measures $\mu_t = \law (Y_t ; \X)$ satisfies the measure-valued rough partial differential (forward) equation
 \[
	 d \mu_t = \LL^\star_t \mu_t dt + \TT_t^\star \mu_t d \X_t, \quad \mu_0 = \law (Y_0 ; \X),
 \] understood in analytically weak and integral sense. More precisely, for all $\varphi \in \C^\gamma_b$ for some $\gamma\in(\frac{1}{\alpha},3]$, we have
 \begin{equation} \label{equ:mimick0}
            \langle \mu_t , \varphi \rangle =   \langle \mu_0 , \varphi \rangle
             + \int_0^t \langle  \mu_s , (\LL_s \varphi ) \rangle ds + \int_0^t  ( \langle \mu_s , \TT_s \varphi \rangle,     \langle \mu_s , \TT'_s \varphi \rangle ) d \X_s.
\end{equation}
\end{theorem}
\begin{remark}\label{rem.nongeo} The rough forward equation of  \cref{thm:mimick0} is not valid as written for non-geometric rough paths. Indeed,
let us introduce random first and second order differential operators \( \T'_{1;t,\omega}\varphi(y) =D\varphi(y)Y_t'' (\omega) \), \( \T'_{2;t,\omega}\varphi(y)=D^2\varphi(y)(Y'_t,Y'_t) (\omega) \)
so that \( \T'=\T'_1+\T'_2 \).
A look at \eqref{equ:rStrat} reveals that
the correct equation involves a Young drift correction
and reads
 \[
 \begin{aligned}
d \mu_t &= \LL^\star_t \mu_t dt + \TT_t^\star \mu_t d \X + \tfrac12(\TT'_{2;t})^\star \mu_t d [\X]
\\&
=\LL^\star_t \mu_t dt + \TT_t^\star \mu_t \circ d \X -\tfrac12(\TT'_{1;t})^\star\mu_t d[\X]
\end{aligned}
\]
where for \( i=1,2\), we wrote \( \TT'_{i;t}\varphi(y):= \mathbb E[\T'_{i;t,\omega}\varphi(y)|Y_t=y]\).
 \end{remark}

\begin{example} Assume rough McKean--Vlasov dynamics with progressively measurable coefficients, with $\mu_t = \law (Y_t ; \X)$ where
 \[
 	dY_t (\omega) = b_t ( \omega, Y_t (\omega), \mu_t ) dt + \sigma_t ( \omega, Y_t (\omega), \mu_t )d B_t + f_t(\omega,  Y_t (\omega), \mu_t )d \X_t.
 \]
 The stochastic rough integral is understood as $\int (Y',Y'') d \X$ with $Y'_{t,\omega} := f_t(\omega,  Y_t (\omega), \mu_t )$,
 and $Y''_{t,\omega}$ given as sum of $((Df_t)f_t)(Y_t (\omega))$ and a term that captures the controlled structure (in $t$)  induced by the dependence on $\law (Y_t ; \X)$, the full specification of which is left to
 \cite{friz2025McKean}. There, the reader can also find existence and uniqueness of such equations, together with propagation of chaos results \cite{BFHL25} with fixed (rough) common noise. Upon randomization of
 $\X$, similar to Appendix \ref{app:RSDE}, this yields a (common noise) robustification of an important class of equations, e.g. \cite{lacker2022superposition}, where the authors also emphasizes the importance of
 random coefficients. (This seems out of reach of previous work on rough McKean--Vlasov which however dealt with a different problem: the case of random rough paths $\X = \X (\omega)$ which is not at all our goal; a more detailed literature review is left to \cite{friz2025McKean}.) 

  In the ``Markovian'' McKean--Vlasov situation when coefficient dependence $( \omega, Y_t (\omega), \mu )$ is replaced by $(Y_t (\omega), \mu )$, the conditioning procedure for the coefficients is trivial, i.e.
  $$\bar b_t (y) = b_t (y, \mu_t),
\enskip
 \bar \sigma_t (y) = \sigma_t (y, \mu_t), $$ and one easily arrives at the rough forward equation
 \begin{equation} \label{equ:RFE}
	 d \mu_t = (\mathcal L_t[\mu_t])^\star \mu_t dt + ( \T_t [\mu_t])^\star \mu_t d \X,
 \end{equation}
 with second order differential operator $\mathcal L[\nu]$ given by
 \[
 	(\mathcal L_t[\nu]) \varphi (y) = b_t (y,\nu) \cdot D \varphi (y) + \tfrac{1}{2} (\sigma_t \sigma_t^\dagger)(y,\nu) : D^2 \varphi (y ).  \]
 \end{example}

 \begin{remark} In the setting of the random rough approach to RSDEs, it is seen in \cite{MR3746646, coghi2019rough} that uniqueness results for such rough forward equations can be obtained by forward-backward duality, more specifically if one has a (spatially) regular solution to the rough Kolmogorov backward equation in duality with \eqref{equ:mimick0}.
  \end{remark}
  \noindent {\bf Update}:  At revision stage, a complete analysis of \eqref{equ:RFE} has been achieved in \cite{BFS25}.  

 \begin{proof}[Proof of \cref{thm:mimick0}] Taking expectation in \eqref{equ:roughMP}, one sees that
$$ 
 \E \varphi \left( Y_{t}\right) = \E \varphi \left( Y_{0}\right)
    + \E \int_0^t (\mathcal L_{s, \omega} \varphi) \left(Y_s\right) d s  
  + \E \int_{0}^{t} (\T_{s, \omega} \varphi, \T'_{s, \omega} \varphi) (Y_s) d \X _{s}.
 $$
 Applying Fubini theorem and tower property for conditional expectations, we have 
$$ \E \int_0^t (\mathcal L_{s, \omega} \varphi) \left(Y_s\right) d s = \int_0^t  \E [ (\mathcal L_{s, \omega} \varphi) \left(Y_s\right) ] d s = \int_0^t 
 \E[ (\bar{\mathcal L}_{s} \varphi) \left(Y_s\right) ] d s, $$
where we note that measurability of 
$ (\bar{\mathcal L}_{s} \varphi) \left(Y_s\right) = 
\bar b_{s} \cdot D \varphi (Y_s) + \tfrac{1}{2} a_{s}: D^2 \varphi (Y_s)
$
follows e.g. by \cite[Prop 5.1]{Brunick2013}. To see that 
 \[
	 \E \int_{0}^{t} (\T_{s, \omega} \varphi, \T'_{s, \omega} \varphi) (Y_s) d \X _{s}
    =
    \int_{0}^{t}  \E [ \T_{s, \omega} \varphi (Y_s) , \T'_{s, \omega} \varphi (Y_s)  ]  d \X _{s}
    =
     \int_{0}^{t} \E [ (\bar \T_s \varphi, \bar \T'_s \varphi)(Y_s)] d \X _{s},
 \]
we need a stochastic variant of rough Fubini \cite[Ex. 4.10]{FH20}, since $(Z,Z')=(\T_{s, \omega} \varphi, \T'_{s, \omega} \varphi) (Y_s)$ is only {\em stochastically} controlled. Fortunately, the argument is simple. By the very definition
 of the stochastic rough integral $\int (Z,Z') d \X $, it is the stochastic sewing limit of $A :=Z\delta X + Z'
 \XX$, which satisfies the assumptions of stochastic sewing. Since $\E_{\bigcdot}$ can only lower any given moment norm, it is clear that $\E A= (\E Z) \delta X + (\E Z')
 \XX$ then satisfies the assumption of the classical sewing lemma. But then we have that the (random) Riemann sums based on $A$ converge in some moment norm to the stochastic rough integral $\int (Z,Z') d\X$, hence their means  also converge. But these means are exactly the  (deterministic) Riemann sums based on $\E A$, convergent by the classical sewing lemma.
\end{proof}

\subsubsection*{Proof of the rough It\^o formula}
\begin{proof}
[Proof of \cref{thm.rIto}]
Assuming without loss of generality that \( n\in (4,4(\gamma-1)] .\)

Let us first check that the right hand side of \eqref{equ:rIto} is meaningful, which will be the case if the stochastic rough integral is well-defined. We note that our assumptions on the coefficients ensure that \( (Y,Y')\in \D_X^{\alpha,\beta}L_{4,n}. \)
From the stability of compositions (\cref{lem.composecvec}) and the fact that $D\varphi$ has regularity $\bar \gamma-1:=\frac n4$,
we have that
\[
(Z,Z')=(D\varphi(Y),D^2\varphi(Y)Y')\in \D^{\alpha,\beta\wedge[(\bar\gamma-2)\alpha]}_XL_{4,4}.
\]
Moreover,
because of the algebraic identity \( R^{ZY'}_{s,t}= R^{Z}_{s,t}Y'_{s} + \delta Z_{s,t}\delta Y'_{s,t} + Z_s R^{Y'}_{s,t} \) (and similar for \( \delta (Z'Y')_{s,t}\), \(\delta (ZY'')_{s,t} \))
we see using conditional Cauchy--Schwarz inequality that
\( (\T\varphi,\T'\varphi)(Y)=(ZY',Z'Y' + ZY'') \) defines an extended stochastic controlled rough path such that
\begin{equation}
\label{Gamma_well_def}
(\T\varphi,\T'\varphi)(Y)\in \DD^{\beta,\beta''}_{X}L_{2,2}\,.
\end{equation}
Next, Taylor theorem shows that
\begin{equation}
\label{taylor}
\begin{aligned}
&\varphi(Y_t)-\varphi(Y_s)
\\ &
=\wei{D\varphi(Y_s),\delta Y_{s,t}} +\tfrac12\wei{D^2\varphi(Y_s), \delta Y_{s,t}^{\otimes2}}+O(|\delta Y_{s,t}|^{\gamma})
\\&=\wei{D\varphi(Y_s),\delta Y_{s,t}}+\wei{D^2\varphi(Y_s),Y_s^{\prime\otimes 2}\XX_{s,t}} +\tfrac12\wei{D^2\varphi(Y_s),\delta Y_{s,t}^{\otimes 2} - 2Y'^{\otimes 2}_s\sym \XX_{s,t} }
+O(|\delta Y_{s,t}|^{\gamma})
\\&
=: A_{s,t} + O(|\delta Y_{s,t}|^{\gamma})\,.
\end{aligned}
\end{equation}
For any partition \( \cpp \) of \( [0,t ]\), we find in particular that
\(  \varphi(Y_t)-\varphi(Y_0) = \sum_{[u,v]\in \cpp}[\varphi(Y_v)-\varphi(Y_u)]= \sum_{[u,v]\in \cpp} A_{u,v} + O(|\cpp|^{\gamma\alpha-1}) \)
and this establishes the fact that
\[
 \varphi(Y_t)-\varphi(Y_0)= L_2\text{-}\lim_{|\cpp|\to0}\sum_{[u,v]\in\cpp} A_{u,v} \,.
\]
Let \[ \mathcal I_{s,t}:=\int_{s}^{t} D \varphi (Y_r)\sigma_rd B_r + \int_s^t (\mathcal L_r \varphi) (Y_r) d r + \int_s^t \T\varphi(Y_r)d \X _r + \frac{1}{2}\int_s^tD^2 \varphi( Y_r) \bigl(Y_r^{\prime }, Y_r^{\prime }\bigr)\,d [ \X ]_r\]
which is a well-defined adapted quantity in \( L_2 \)  for each \( (s,t)\in \Delta \) (by \eqref{Gamma_well_def}).
If we can show the existence of \( \lambda>\frac12 \) and \( \mu>1 \) such that
	 \begin{equation}\label{est.A1_ito}
			 	\|\mathcal I_{s,t}-A_{s,t}\|_{2} \lesssim (t-s)^{\lambda}\,
			 \end{equation}
			 and
			\begin{equation}\label{est.A2_ito}
				\|\E_s(\mathcal I_{s,t}-A_{s,t})\|_2\lesssim (t-s)^{\mu},
			\end{equation}
then the desired conclusion will be a consequence of the uniqueness part of \cref{prop.SSL}.
\smallskip

\textit{Step 1: proof in the case when \( \X \) is geometric.}
To obtain the bounds \eqref{est.A1_ito}-\eqref{est.A2_ito},
we write (noting that \( [\X]=0\)),
\begin{equation}
\label{taylor_ito}
\begin{aligned}
\mathcal I_{s,t}-A_{s,t}
&=
\int_{s}^{t} (D\varphi(Y_r)-D\varphi (Y_s))\sigma_rd B_r
+\int_s^t(D\varphi(Y_r)-D\varphi (Y_s))b_rdr
\\&\quad
+ \Big(\int_s^t \T\varphi(Y_r)Y'_rd \X _r-\int_s^t D\varphi(Y_s)Y'_rd \X _r -\wei{D^2\varphi(Y_s),Y_s^{\prime\otimes 2}\XX_{s,t}}\Big)
\\&\quad \quad
+\Big(\int_s^t a_r(Y_r)D^2\varphi (Y_r)d r -\frac12\wei{D^2\varphi(Y_s),\delta Y_{s,t}^{\otimes 2} - 2Y'^{\otimes 2}_s\sym \XX_{s,t} }\Big)
\\&
=J^1_{s,t} +\dots + J^4_{s,t}
\end{aligned}
\end{equation}
(where \( a_r=\frac12\sigma_r\sigma_r^{\dagger} \))
and estimate each term separately.
The first term is easily estimated through It\^o isometry.
Indeed, we have
\[
\|J^1_{s,t}\|_2 \lesssim (t-s)^{\frac12+\alpha}
\,,\quad
\|\E_sJ^1_{s,t}\|_2 =0\,,
\]
for an implicit constant only depending on \( |\varphi|_2 \).
Similarly, we find
\begin{equation}
\label{est.drift_ito}
\|J^2_{s,t}\|_2 \lesssim (t-s)^{1+\alpha}\,,
\quad
\|\E_sJ^2_{s,t}\|_2 \lesssim (t-s)^{1+\alpha}.
\end{equation}
For the third term,  putting
\begin{align*}
	E_{s,t}=\int_s^t Y'_rd\X_r-Y'_s\delta X_{s,t}-Y''_s\XX_{s,t},
\end{align*}
 we have
\begin{align*}
	J^3_{s,t}
	&=\int_s^t( \T\varphi,\T'\varphi)(Y_r)d \X _r - \T\varphi(Y_s)\delta X_{s,t} - \T'\varphi(Y_s)\XX_{s,t}
	- \wei{D\varphi(Y_s),E_{s,t}}.
\end{align*}
We apply \eqref{est.Hzm}, \eqref{est.Hcondz} (taking into account $(Y,Y')$ and $( \T\varphi,\T'\varphi)(Y_r)$ belong to $ \DD^{\beta,\beta''}_{X}L_{2,2}$) to obtain that 
\[
\|J^3_{s,t}\|_2 \lesssim (t-s)^{\alpha+\beta}\,,
\quad
\|\E_sJ^3_{s,t}\|_2 \lesssim (t-s)^{\alpha+\beta+\beta''}\,,
\]
where this time the implied constants depend on \( |\varphi|_3\). Finally, we can write
\begin{multline*}
	\frac12\wei{D^2\varphi(Y_s),\delta Y_{s,t}^{\otimes 2} - 2Y'^{\otimes 2}_s\sym \XX_{s,t}}
	\\=
	\Big\langle D^2\varphi(Y_s),(\int_s^t \sigma_{t}dB_r)^{\otimes2} + (\int_{s}^tY'_rd\X_r)^{\otimes2} - Y'^{\otimes 2}_s\delta X_{s,t}^{\otimes2} + \bar J^4_{s,t}\Big\rangle
\end{multline*}
for some remainder term \( \bar J^4_{s,t} \) such that \( \|\bar J^4_{s,t}\|_2\lesssim (t-s)^{\alpha +\frac12} \) while \( \|\E_s\bar J^4_{s,t}\|_2\lesssim (t-s)^{3\alpha}  \).
Consequently, it follows from  It\^o Isometry and standard arguments that
\[
\|J^4_{s,t}\|_2 \lesssim (t-s)^{\alpha+\frac12}\,,
\quad
\|\E_sJ^4_{s,t}\|_2 \lesssim (t-s)^{3\alpha}\,.
\]
Hence our conclusion.\smallskip

\textit{Step 2: general case.}
In the notation of \eqref{equ:rStrat},
we remark that \eqref{equ:rIto0} is equivalent to
\begin{equation}
\label{equivalent_dynamics}
dY_t(\omega)= b_{t,\omega}dt -\tfrac12 Y''_{t,\omega} d[\X]_t + \sigma_{t,\omega}dB_t + (Y',Y'')_{t,\omega}\circ d\X_t
\end{equation}
where the second integral is a Young one.
Indeed, we have the Davie-type expansion
\[
\delta Y_{s,t} - \int _s^t b_rdr -\int _s^t\sigma_rdB_r
= Y'_s\delta X_{s,t} + Y''_s(\XX^{g}-\tfrac12\delta[\X])_{s,t} + J_{s,t}
\]
moreover the term
\[
\tilde J_{s,t}:=-\tfrac12\int_s^t Y''_{r}d[\X]_r + Y''_s\tfrac12\delta[\X]_{s,t}
\]
satisfies
\[
\|\tilde J_{s,t}\|_2\lesssim (t-s)^{2\alpha+\beta'},
\quad \quad
\|\E_s\tilde J_{s,t}\|_2 \lesssim (t-s)^{2\alpha+\beta'}
\]
and so \eqref{equivalent_dynamics} is also a consequence of the uniqueness part of \cref{prop.SSL}.

Now, the claimed formula follows by the same argument as in Step 1, where the drift term is replaced by a mixed Lebesgue/Young integral. (In this case the estimate \eqref{est.drift_ito} has to be replaced by the inequalities
\(\|J^2_{s,t}\|_2 \lesssim (t-s)^{1+\alpha} + (t-s)^{3\alpha}\,,
\|\E_sJ^2_{s,t}\|_2 \lesssim (t-s)^{1+\alpha} +(t-s)^{3\alpha}
\).)
Applying the geometric rough It\^o formula, we have
\begin{multline*}
\varphi ( Y_{t}) -\varphi ( Y_{0})
    - \int_{0}^{t} D \varphi (Y_{s})\sigma_{s, \omega} d B_{s}
    - \int_0^t (\mathcal L_{s, \omega} \varphi) (Y_s) d s
\\
=-\frac12\int _0^t D\varphi(Y_s)Y''_sd[\X]_s
  + \int_{0}^{t} D \varphi (Y_{s})
    Y'_s d \X^g _{s}
\\
=  \int_{0}^{t} D \varphi (Y_{s}) Y'_s d (\X^g - (0,\tfrac12\delta[\X]))_{s} + \frac12\int _0 ^t D^2\varphi(Y_s)(Y_s',Y_s')d[\X]_s\,,
\end{multline*}
as claimed.
\end{proof}

\begin{remark}
\label{rem:M_f}
\cref{thm.rIto} provides an explicit decomposition of \( \varphi(Y) \) in terms of a martingale (stochastic integral) and a rough stochastic integral.
For lower regularity exponents e.g.\ when \( \gamma\in(1,\frac1\alpha] \), it is still true that $\varphi(Y)$ is the sum of a martingale and a random controlled rough path.
This is indeed a consequence of the decomposition \cref{thm.mj}, which holds even more generally when \( \varphi \rightsquigarrow f_t(\omega,\cdot) \) has the structure of a stochastic controlled vector field.

More precisely, let $(f,f')$ be in $\D^{\beta,\beta'}_XL_{m,\infty}\C^{\gamma}_b$,  for some $\gamma\in(1,2]$, $m\in[2,\infty)$ and  $0<\beta'\le\beta\le\alpha$. Let $n\in[\gamma m,\infty]$ and $(Y,Y')$ be a stochastic controlled rough path in $\D_X^{\beta,\beta'}L_{m,n}$.
			We assume that
			\[
				\beta+\beta'' >\frac12,\quad\text{where}\quad \beta''=\min\{(\gamma-1)\beta,\beta'\}.
			 \]
			Then, there exist processes $M^f,Y^f$ such that
			\begin{enumerate}[(i)]
				\item\label{fmj1} $f_t(Y_t)=M^f_t+Y^f_t$ a.s.\ for every $t\in[0,T]$;
				\item $M^f$ is an $\{\cff_t\}$-martingale, $M^f_0=0$;
				\item\label{fmj3} $Y^f$ is $\{\cff_t\}$-adapted and satisfies
				\begin{multline*}
					\|\|Y^f_t-Y^f_s-(Df_s(Y_s)Y'_s+f'_s(Y_s)) \delta X_{s,t}|\cff_s\|_m\|_{\frac n{\gamma}}
					\\\lesssim([(f,f')]_{\gamma;\infty}+ \|(f,f')\|_{X;\beta,\beta';m,\infty})(1\vee|\delta X|_\alpha)(1\vee\|Y,Y'\|_{X;\beta,\beta';m,n}^{\gamma})|t-s|^{ \beta+\beta''}
				\end{multline*}
				for every $(s,t)\in \Delta$.
			\end{enumerate}
			Furthermore, given $(Y,Y')$ in $\D_X^{\beta,\beta'}L_{m,n}$, the pair of processes $(M^f,Y^f)$ is characterized uniquely by \ref{fmj1}-\ref{fmj3}. \smallskip

			Indeed,
			putting $(Z,Z')=(f(Y),Df(Y)Y'+f'(Y))$, we see from \cref{lem.composecvec} that $(Z,Z')$ is a stochastic controlled rough path in $\D_X^{\beta,\beta''}L_{m,\frac n{\gamma}}$ and 
			\begin{align*}
				\|(Z,Z')\|_{X;\beta,\beta'';m,\frac n{\gamma}}\lesssim ([(f,f')]_{\gamma;\infty}+ \|(f,f')\|_{X;\beta,\beta';m,\infty})(1\vee\|Y,Y'\|_{X;\beta,\beta';m,n}^{\gamma}).
			\end{align*}
			An application of \cref{thm.mj} gives the result.
		\end{remark}

\subsection{Weak solutions} 
\label{sub:weak_existence}
 	Herein, we study weak solutions of	\eqref{eqn.srde}.
 These are defined in such a way that is transparent from the corresponding classical notion for SDEs.
 Namely, given an initial probability distribution $\mu$ on $W$ and $m\ge2$,
	 a \textit{weak solution} to \eqref{eqn.srde}  starting from $\mu$ consists of a filtered probability space $(\tilde \Omega,\mathcal {\tilde G},\mathbb {\tilde P},\{\mathcal {\tilde F}_t\})$ together with a pair $(\tilde Y,\tilde B)$ such that $\tilde B$ is an $\{\tilde \cff_t\}$-Brownian motion in $\Vone$, $\law(\tilde Y_0)=\mu$, and $\tilde Y$ is an $L_{2,\infty}$-solution to
	\begin{equation}\label{def:weak_sol}
		d\tilde Y_t=b_t(\tilde Y_t)dt+\sigma_t(\tilde Y_t)d\tilde B_t+(f_t,f'_t)(\tilde Y_t)d\X_t, \quad t\in[0,T].
	\end{equation}
	Such weak solution is $L_{m,\infty}$-integrable if $\tilde Y$ is an $L_{m,\infty}$-solution on the stochastic basis $(\tilde \Omega,\mathcal {\tilde G},\mathbb {\tilde P},\{\mathcal {\tilde F}_t\})$.\smallskip

In constrast to other sections, we assume here that the coefficients in \eqref{def:weak_sol} are deterministic.
Namely, \( \omega\mapsto g_t(\omega,\cdot)\) is constant for every \( t\in I \)
and each \( g\in \{b,\sigma,f,f'\} \).
Likewise, we will call \( (f,f') \) a deterministic controlled vector field and write
\begin{equation}
\label{def.deter_f}
(f,f')\in \mathscr D_X^{\beta,\beta'}\mathcal C^{\gamma}_b
\end{equation}
if \( (f,f')\) is deterministic and belongs to \( \D_X^{\beta,\beta'}L_{m,n}\mathcal C^{\gamma}_b \) for some \( \beta,\beta'>0 \), \( \gamma>1 \), and (irrelevant) parameters \( n \) and \( m \).
We will abbreviate for convenience
\[
\bk{(f,f')}_{X;\beta,\beta'}:= \bk{(f,f')}_{X;m,n;\beta,\beta'},
\quad \quad
\|(f,f')\| _{\gamma}=\|(f,f')\|_{\gamma;n}
\]
(and so on).
\smallskip

	The first result is concerned about the existence of weak solutions in this setting.
	\begin{theorem}
	\label{thm:main_weak}
		Suppose that $b,\sigma$ are  bounded continuous  and \( (f,f')\) is a deterministic controlled vector field in \( \mathscr D_X^{\beta,\beta'}\mathcal C_b^{\gamma} \) with $\frac13< \beta\le \alpha$, $2 \beta+\beta'>1$ and  $\gamma\in(\frac1 \beta-1,2]$. 
		Let $\mu$ be a probability measure on $W$.
		Then for every $m\ge2$, there exists a weak solution $(\tilde \Omega,\mathcal {\tilde G},\mathbb {\tilde P},\{\mathcal {\tilde F}_t\};\tilde Y,\tilde B)$ to \eqref{eqn.srde} starting from $\mu$ which is $L_{m,\infty}$-integrable for every $m\ge2$.
	\end{theorem}
We need the following intermediate result.
	\begin{lemma}\label{lem.lim}
		Let $\beta,\beta',m,n$ be as in \cref{thm.Hroughint}.
		Let $(Z,Z'), \{(Z^k,Z'^k)\}_{k\ge0}$ be extended stochastic controlled processes such that (recall the notation in \eqref{nota.Gamma})
		\begin{equation*}
			\Gamma^{\beta,\beta';m,n}(Z,Z';[0,T])\vee \sup_{k\ge0}
		\Gamma^{\beta,\beta';m,n}(Z^k,Z'^k;[0,T])
		<\infty
		\end{equation*}
		and for each $s\in[0,T]$, $\lim_k(Z^k_s,Z'^k_s)=(Z_s,Z'_s)$ in $L_m$. Then
		\begin{equation*}
			\lim_k\sup_{t\in[0,T]}\bigg|\int_0^tZ^kd\X-\int_0^tZd\X\bigg|=0
			\text{ in }L_m.
		\end{equation*}
	\end{lemma}
	\begin{proof}
		Define for each $(s,t)\in \Delta$, $A^k_{s,t}=Z^k_s \delta X_{s,t}+Z'^k_s\XX_{s,t}$ and similarly for $A_{s,t}$. By assumptions, we have for each $s\in[0,T]$, $\lim_k \sup_{t\in[s,T]}| A^k_{s,t}-A_{s,t}|=0$ in $L_m$.
		Applying \cref{thm.Hroughint,cor.SSLk} yields the result.
	\end{proof}

	\begin{proof}[Proof of \cref{thm:main_weak}]
		Using mollifiers, we can find a sequence of functions $\{b^n,\sigma^n,f^n,(f^n)'\}$ such that	$b^n,\sigma^n$ are  bounded Lipschitz functions (with respect to spatial variables), while $(f^n,(f^n)')$ belongs to \( \mathscr D_X^{\beta,\beta'}\mathcal C_b^3 \),
		\[
		 	\lim_n\sup_{t\in[0,T]}\left(|f_t^n-f_t|_{\gamma-1}+|(f^n)'_t-f'_t|_{\gamma-2}+|b^n_t-b_t|_\infty+|\sigma^n_t-\sigma_t|_\infty \right)=0
		\]
		and additionally:
			\begin{equation*}
				\sup_n\sup_{t\in[0,T]}(|b^n(t,x)-b^n(t,\bar x)|+|\sigma^n(t,x)-\sigma^n(t,\bar x)|)\lesssim|x-\bar x| \quad \forall x,\bar x,
			\end{equation*}
		\begin{multline*}
			\sup_n\left (\|(f^n,(f^n)')\| _{\gamma-1;[0,T]}+\bk{(f^n,(f^n)')}_{X;\beta,\beta';[0,T]}+|b^n|_\infty+|\sigma^n|_\infty\right )
			\\\le C(\|(f,f')\| _{\gamma-1;[0,T]},|b|_\infty,|\sigma|_\infty)\,.
		\end{multline*}

		Let $(\Omega,\cgg,\P,\{\cff_t\})$ be a probability space which support an $\{\cff_t\}$-Brownian motion $B$ and a random variable $\xi$ with law $\mu$.
		For each $n$, let $Y^n$ be the unique solution on $[0,T]$ to the rough stochastic differential equation
		\[
			dY^n=b^n(r,Y^n)dr+\sigma^n(r,Y^n)dB+(f^n,(f^n)')(r,Y^n)d\X, \quad Y^n_0=\xi.
		\]
		From \cref{thm.fixpoint}, $Y^n$ exists and is an $L_{m,\infty}$-solution for every $m\ge2$.
		From  \cref{prop.apri},
		 we see that for every $m\ge2$,
		\[
			\sup_n\|\delta Y^n\|_{\alpha;m}\le \sup_n\|\delta Y^n\|_{\alpha;m,\infty}<\infty\,.
		\]
		This in turn implies that the law of $\{Y^n\}_n$ is tight on $C([0,T])$.
		By Skorokhod embedding, we can find a subsequence $\{n_k\}$, a complete filtered probability space $(\tilde\Omega,\tilde\cgg,\tilde\P,\{\mathcal{\tilde G}_t\})$ such that $\tilde{B}$ is an $(\tilde{\cff}_t)$-Brownian motion,
		 $\law(\tilde Y^{n_k},\tilde B)=\law(Y^{n_k},B)$ and $\lim \tilde Y^{n_k}_s=\tilde Y_s$ a.s.\ for every $s\in[0,T]$.
		 In particular, $\tilde{Y}^{n_k}$ is an $L_{m,\infty}$-solution to \eqref{eqn.srde} with coefficients $(b^{n_k},\sigma^{n_k},f^{n_k},(f^{n_k})')$ and Brownian motion $\tilde B$.
		Sending $n_k\to\infty$ and using  \cref{lem.lim}, we see that $\tilde{Y}$ is a solution to \eqref{eqn.srde}  with coefficients $(b,\sigma,f,f')$ and Brownian motion $\tilde B$.
	\end{proof}
	We now turn our attention to uniqueness.
	\begin{theorem}[Uniqueness in law]\label{thm.weakuniq}
		Let $\sigma,b$ be bounded Lipschitz functions and suppose that both $(f,f'),(Df,Df')$ are deterministic controlled vector fields in $\mathscr D^{2\alpha}_X\mathcal C_b^{\gamma}$ and $\mathscr D_X^{\alpha,(\gamma-2)\alpha}\mathcal C_b^{\gamma-1}$ respectively, where $\gamma\ge1/\alpha$.
		Let $(Y,B,\{\cff_t\})$ and $(\bar Y,\bar B,\{\bar\cff_t\})$ be two integrable solutions to \eqref{eqn.srde}
		 defined respectively on stochastic bases $(\Omega,\cgg,\P)$ and $(\bar \Omega,\bar\cgg,\bar\P)$ such that $\law(Y_0)=\law(\bar Y_0)$. Then $Y$ and $\bar Y$ have the same law on $C([0,T];W)$.
	\end{theorem}
	\begin{proof}
		When $\gamma>1/\alpha$, for $T>0$ small enough, we have
		\[
		(Y,f(Y))=\lim_{n\to \infty}\underbrace{ \Phi^{T, B}\circ\Phi^{T, B}\dots \circ \Phi^{T, B}}_{n\text{ times }}
		\]
		where $\Phi^{T, B}$ is the fixed point map given by \eqref{fixed_point_map}
		(we emphasize here its dependency on the underlying Brownian motion). In particular, there is a measurable map $\Psi\colon C([0,T];\Vone)\to C([0,T];W)$ such that $ Y|_{ \Omega\times[0,T]}=\Psi( B)$.
		In the critical case when $\gamma=1/\alpha$, we reason in the following way.
		We choose a sequence  $\{(f^n,(f^n)')\}$ as in the proof of \cref{thm:main_weak}. Let $Y^n$ be the solution to \eqref{eqn.srde} with coefficients $(b, \sigma,f^n, (f^n)')$.
		By a tightness argument similar to the one in the proof of \cref{thm:main_weak}, we can find a complete filtered probability space $(\tilde \Omega,\{\tilde\cgg_t\},\tilde \P)$, an $\{\tilde\cff_t\}$-Brownian motion $\tilde B$ and processes $\tilde Y^n$ on it so that
		\begin{itemize}
		  	\item $(\tilde Y^n,\tilde B)\stackrel{law}{=}(Y^n,B)$,
		  	\item there is a subsequence $\{k_n\}$ so that $\lim_n\tilde Y^{k_n}=\tilde Y^{(k)}$  in $C([0,T];W)$ a.s.
		  	\item $\tilde Y^{(k)}$ as above is a solution to \eqref{eqn.srde} with coefficients $(b,\sigma,f,f')$.
		\end{itemize}
		Let $\{k_n\}$ and $\{l_n\}$ be two subsequences such that $\lim_n\tilde Y^{k_n}=\tilde Y^{(k)}$ and $\lim_n\tilde Y^{l_n}=\tilde Y^{(l)}$  in $C([0,T];W)$ a.s. Since $\tilde Y^{(k)}$ and $\tilde Y^{(l)}$ are solutions to \eqref{eqn.srde} on the same stochastic basis, by \cref{thm.uniq.notloc}, it is necessary that $\tilde Y^{(k)}=\tilde Y^{(l)}$. As in \cite{MR1392450}, this shows that the sequence $\{Y^n\}$ converges to a limit $Y$ in $C([0,T];W)$ which is a solution to \eqref{eqn.srde}.
		On the other hand, writing $Y^{n}=\Psi^n(B)$ by the previous argument for subcritical cases, we see that $Y=\Psi(B)=\lim_n \Psi^n(B)$ for a measurable function $\Psi$.

		Repeating the argument over any interval of the form $[nT,(n+1)T]$ for $n\in\mathbb N$, such a relation implies that the distribution of $Y$ under $\mathbb{ P}$ depends on $\sigma,f,b$ but not on the stochastic basis $(\Omega,\{\cff_t\},\P)$. This also gives $\P\circ Y^{-1}=\bar\P\circ\bar Y^{-1}$.
	\end{proof}

\appendix

\section{Randomized RSDEs and conditioned SDEs} \label{app:RSDE}

Part of our motivation was the ``partially quenched'' study of doubly SDEs, driven jointly by independent standard Brownian motion $B$ and $W$, but conditionally on $W$. (As always, $B$ and $W$ may be multidimensional.)  To keep in
technicalities to a minimum we consider $(B,W)$ given on a product stochastic basis, $\W = \W^B \otimes \W^W$, $\omega=(\omega^B,\omega^W)$,and let $\mathbf{W} = \mathbf{W} (\omega^W)$ be the It\^o-Brownian rough path over $W$ so that
$\mathbf{W} (\omega^W) \in \mathscr{C}^\alpha ([0,T])$, $1/3 < \alpha < 1/2$, for all $\omega^W$. According to Proposition \ref{prop.davie}, any solution $Y = Y^\X (\omega^B)$ to the RSDE
\begin{equation}\label{eqn.srdeA}
		dY_t(\omega)=b_t(\omega,Y_t(\omega);\X)dt+\sigma_t(\omega,Y_t(\omega);\X)dB_t(\omega)+(f_t,f'_t)(\omega,Y_t(\omega);\X)d\X_t, 
\end{equation}
with initial datum $Y_0 = \xi$,
satisfies, on the stochastic basic $\W^B$, an accompanying integral equation (cf. \eqref{eqn.int.srde}), where our notation highlights the fact that all coefficients may depend on $\X \in \mathscr{C}^\alpha$.
 Assume that 
\begin{itemize}
  \item  for every $\X$, there is a unique solution to \eqref{eqn.srdeA} provided by Picard iteration in some moment space (as provided by \cref{thm.fixpoint})   
  \item  all coefficients are progressive measurable, also w.r.t, $y \in\mathbb{R}^{d_Y} $ and the rough path $\mathbf{X} \in \mathscr{C}^\alpha$, in the precise sense of measurablilty w.r.t. the product $\sigma$-field of the progessive field, and the Borel sets of $\mathbb{R}^{d_Y}$ and $\mathscr{C}^\alpha$, respectively.
\item all coefficients are causal in $\mathbf{X} \in \mathscr{C}^\alpha$
\end{itemize}

\begin{theorem}\label{thm:doublyIto}
There exists a jointly progressively measurable version of $Y_t^\X (\omega^B)$ as function of $(t,\omega^B,\X)$. Its randomization $Y^{\mathbf{X}}\big|_{\mathbf{X}= \mathbf{W} (\omega^W)}$ admits a continuous modification denoted by $\bar{Y} (\omega)$, which solves on $\W$  the ``doubly'' It\^o stochastic differential equation

\begin{equation}\label{eqn.srdeA1}
		d \bar{Y}_t(\omega)=\bar{b}_t(\omega,\bar{Y}_t(\omega))dt+\bar{\sigma}_t(\omega,\bar Y_t(\omega))dB_t+(\bar f_t, \bar f'_t)(\omega,\bar Y_t(\omega))d W_t, 
\end{equation}
where, for $\Xi \in \{b,\sigma,f,f'\}$, we write $\bar{\Xi}_s(\omega) \coloneqq \Xi_s(\omega^B,\mathbf{W}(\omega^W))$. Moreover, 
\[
  \mathrm{Law} \big(\bar{Y}_t \,\big|\, \mathcal{F}^W_T\big)(\omega)
  = \mathrm{Law} \!\left(Y_t^{\mathbf{X}}\right)\Big|_{\mathbf{X}=\mathbf{W}(\omega)}
\]
which provides explicit access to the regular conditional distribution of $\bar{Y}_t$, given $ \mathcal{F}^W_T$.
\end{theorem}

This result appears in  \cite{FLZ25} where randomization of RSDEs is studied in the generality of rough It\^o process, building on \cite{FLZ24x}. Leaving details to these papers, the major remark here is that one has to solve an uncountable family of RSDEs, parametrized by $\X \in \mathscr{C}^\alpha$. The existence of suitable jointly measurable version is then obtained by measurable selection techniques, which, in turn,  guarantees measurability of subsequent randomization.
(Note that in the present generality of $\X$-dependent coefficients, one cannot expect continuity of $\X \mapsto Y^{\mathbf{X}}$.)

\section{John--Nirenberg inequality} 
\label{sec:john_nirenberg_inequality}
  We present a self-contained proof of \cref{prop.JNineq}. The main argument relies on the following result.
  \begin{proposition}\label{prop.genJN}
    Let $V$ be a continuous adapted process. Suppose that for every $s\le t$, we have
    \begin{align}\label{tmp.Vst}
      \|\E|\delta V_{s,t}||\cff_s\|_\infty\le \Gamma(t-s)^\kappa.
    \end{align}
    Then there are universal finite constants $C,c>0$ which are independent from $\Gamma,\kappa,T$ such that
    \begin{align}\label{est.expV}
      \E e^{\lambda\sup_{t\in[0,T]}|\delta V_{0,t}|}\le C e^{c (\lambda\Gamma)^{1/\kappa}T}
      \quad\text{for every} \quad \lambda>0.
    \end{align}
  \end{proposition}
  \begin{proof}[Proof of \cref{prop.JNineq}]
    Define $V_t=|Y_t-Y_0|_\abx$. Then $V$ is a.s. continuous and satisfies
    \begin{align*}
      \left\|\E(|\delta V_{s,t}||\cff_s)\right\|_\infty
      \le
      \left\|\E(|\delta Y_{s,t}|_\abx|\cff_s)\right\|_\infty
      \le \|\delta Y\|_{\kappa;1,\infty}(t-s)^\kappa,
    \end{align*}
    for every $(s,t)\in \Delta$. From here, \cref{prop.JNineq} is a direct consequence of \cref{prop.genJN}.
  \end{proof}
  To show \cref{prop.genJN}, we need the following elementary result.
   \begin{lemma}\label{lem.xy}
    If $X$ and $Y$ are nonnegative random variables satisfying
    \begin{align*}
      \P(Y> \alpha+\beta)\le \theta\P(Y>\alpha)+\P(X>\theta\beta)
    \end{align*}
    for every $\alpha>0$, $\beta>0$ and $\theta\in(0,1)$; then for for every $m\in(0,\infty)$,
    \begin{align*}
      \|Y\|_m\le c_m m\|X\|_m
    \end{align*}
    where the constant $c_m$ is given by $(c_m)^m=m(1+1/m)^{(m+1)^2}$. (Note that $\sup_{m\ge1}c_m<\infty$.)
  \end{lemma}
  \begin{proof}
    We choose $\beta= h \alpha$ for some $h>0$ and integrate the inequality with respect to $m \alpha^{m-1}d \alpha$ over $(0,k/(1+h))$ to get that
    \begin{multline*}
      (1+h)^{-m} \int_0^km \alpha^{m-1}\P(Y>\alpha)d \alpha\le \theta\int_0^{k}m \alpha^{m-1}\P(Y>\alpha)d \alpha
      \\+\int_0^\infty m \alpha^{m-1}\P(X>\theta h  \alpha)d \alpha.
    \end{multline*}
    Sending $k\to\infty$ and using the layer cake representation $\E X^m=\int_0^\infty m \alpha^{m-1}\P(X>\alpha)d \alpha$, we obtain that
    \begin{align*}
      \left[(1+h)^{-m}-\theta\right]\E Y^m\le (\theta h)^{-m}\E X^m.
    \end{align*}
    We now choose $h=\frac1m$ and $\theta=\left(\frac m{m+1}\right)^{m+1}$ to obtain the result.
  \end{proof}
  \begin{proof}[Proof of \cref{prop.genJN}]
    Let $\lambda>0$ be fixed.
    For each $(s,t)\in \Delta$, define
    \begin{align*}
      V^*_t=\sup_{r\in[0,t]}|\delta V_{0,r}|
      \quad\text{and}\quad
      M_{s,t}=\|\E_se^{\lambda(V^*_t-V^*_s)}\|_\infty.
    \end{align*}
    Following \cite{le2022BMO}, it is sufficient to establish that
    \begin{align}
      M_{s,t}\le M \quad\text{whenever}\quad 2\lambda \Gamma(t-s)^\kappa\le e^{-3}\label{tmp.M1}
      \shortintertext{and}
      M_{s,t}\le M_{s,u}M_{u,t} \quad\text{whenever}\quad s\le u\le t\label{tmp.submult}
    \end{align}
    for some universal finite constant $M$.
    Indeed, assume for the moment that \eqref{tmp.M1}-\eqref{tmp.submult} hold.
	Partitioning $[0,T]$ by points $0=t_0<t_1<\ldots<t_n=T$ so that $\lambda \Gamma(t_k-t_{k-1})^\kappa\le e^{-3}$ for each $k$, one sees that
    \begin{align*}
      M_{0,T}\le \prod_{k=1}^n M_{t_{k-1},t_k}\le M^n.
    \end{align*}
    Omitting details, one can then choose $\{t_k\}$ efficiently so that $n$ is approximately $1+T(e^3 \lambda \Gamma)^{\frac1 \kappa}$. With such choice, the above estimate for $M_{0,T}$ implies \eqref{est.expV}.\smallskip

    Being a simple consequence of conditioning, the proof of \eqref{tmp.submult} is left to the reader. 
	Inequality \eqref{tmp.M1} is a variant of the classical John--Nirenberg inequality for continuous processes (see \cite[Excercise A.3.2]{MR2190038}). Its proof is divided into several steps below.
    \smallskip

    \textit{Step 1.} We show that for every $(s,t)\in \Delta$ and every stopping time $\mu$ satisfying $s\le \mu\le t$, one has
    \begin{align}\label{est.stoS}
      \left\|\E(|\delta V_{\mu,t}| |\cff_\mu)\right\|_\infty\le \Gamma(t-s)^\kappa.
    \end{align}

    Indeed, fix $(s,t)\in \Delta$ and put $C=\Gamma(t-s)^\kappa$.
    Let $\mu$  be a stopping time, $s\le \mu\le t$, and suppose that $\mu$ takes finitely many values $\{s_1<\ldots<s_k\}$. We have
    \begin{align*}
      \E_\mu|V_t-V_{\mu}|
      &=\sum_{j}\E_\mu[|V_t-V_{\mu}|\mathbf{1}_{(\mu=s_j)}]
      =\sum_{j}\mathbf{1}_{(\mu=s_j)}\E_{s_j}[|V_t-V_{s_j}|]
      \le C,
    \end{align*}
    where we used \eqref{tmp.Vst} to obtain the last inequality.
    For a general stopping time $\mu$,  $s\le \mu\le t$, define for each $n$, the stopping time $\mu^n$,
    \begin{align*}
      &\mu^n=0\quad\text{if}\quad \mu=0,
      \\&\mu^n=j2^{-n}t \quad\text{if}\quad (j-1)2^{-n}t< \mu\le j2^{-n}t,\, j\le 2^nt.
    \end{align*}
    It is obvious that $\{\mu^n\}$ is decreasing to $\mu$ and $\mu^n\le t$.
    Then by triangle inequality
    \begin{align*}
      \E_\mu[|V_{t}-V_{\mu}|\wedge N]
      &\le \E_\mu\E_{\mu^n}[|V_{t}-V_{\mu^n}|]+\E_\mu[|V_{\mu^n}-V_{\mu}|\wedge N]
      \\&\le C+\E_\mu[|V_{\mu^n}-V_{\mu}|\wedge N].
    \end{align*}
    Note that $\lim_n V_{\mu^n}=V_\mu$ a.s. so that by Fatou lemma and Lebesgue dominated convergence theorem, we have $\E_\mu[|V_{t}-V_{\mu}|\wedge N]\le C$.
    Sending $N\to\infty$ yields \eqref{est.stoS}.

    \smallskip

    \textit{Step 2.}
    We show that \
    \begin{gather}\label{ineq.JNweighted}
      \|\|\sup_{s\le r\le t}|V_r-V_{s}| |\cff_s \|_m\|_\infty\le 2c_mm \Gamma(t-s)^\kappa.
    \end{gather}

    Fix $s,t$.
    Without loss of generality, we can assume that $2\Gamma(t-s)^\kappa=1$ so that by the previous step, for every stopping time $\mu$ with $s\le \mu\le t$, we have
    \begin{align}\label{tmp.constep1}
      \|\E_\mu|\delta V_{\mu,t}|\|_\infty\le 1/2 .
    \end{align}

      We put $V^*=\sup_{r\in[s,t]}|V_r- V_{s}|$.
      Let $\alpha, \beta$ be two positive numbers and define
      \begin{align*}
        \mu=t\wedge\inf\{r\in[s,t]:|V_r- V_{s}|> \alpha\},
        \quad
        \nu=t\wedge\inf\{r\in[s,t]:|V_r- V_{s}|> \alpha+\beta\},
      \end{align*}
      with the standard convention that $\inf(\emptyset)=\infty$ (so \( \mu=t \) and \( \nu=t \) when these sets are empty). Clearly $\mu$ and $\nu$ are stopping times and $s\le \mu\le \nu\le t$.

      On the event $\{V^*> \alpha+\beta\}$, we have $|V_\nu- V_{s}|\ge \alpha+\beta$ and $|V_\mu-V_s|\ge \alpha$.
      In view of the triangle inequality $|V_\nu-V_s|\le|V_\nu-V_{\mu}|+|V_{\mu}-V_s|$, this implies that
      \begin{align*}
        \{V^*> \alpha+\beta\}\subset\{|V_\nu-V_{\mu}|\ge \beta,\ V^*> \alpha\}.
      \end{align*}
      It follows that for every $G\in\cff_s$ and every $\theta\in(0,1)$,
      \begin{align*}
        \P(V^*>\alpha+\beta,\ G)
        &\le \P(|V_\nu-V_{\mu}|\ge \beta,\ V^*>\alpha,\ G)
        \\&\le \P(|V_\nu-V_{\mu}|\ge \theta^{-1},\ V^*>\alpha,\ G)
        +\P( 1> \theta \beta,\ V^*>\alpha,\ G).
      \end{align*}
      By conditioning, noting that $\{V^*>\alpha\}$ is $\cff_\mu$-measurable, and applying Markov inequality we have
      \begin{align*}
        \P(|V_\nu-V_{\mu}|\ge \theta^{-1},\ V^*>\alpha,\ G)
        \le \theta \|\E_\mu|\delta V_{\mu,\nu}|\|_\infty\P(V^*>\alpha,\ G).
      \end{align*}
      The conditional expectation is estimated using \eqref{tmp.constep1}, this yields
      \begin{align*}
        \P(|V_\nu-V_{\mu}|\ge \theta^{-1},\ V^*>\alpha,\ G)
        \le \theta\P(V^*>\alpha,\ G).
      \end{align*}
      Hence, we obtain from the above that
      \begin{align*}
        \P(V^*> \alpha+\beta,\ G)
        &\le \theta\P(V^*> \alpha,\ G)+\P( 1> \theta \beta,\ G).
      \end{align*}
      Applying \cref{lem.xy}, we get $\|V^*\mathbf{1}_G\|_m\le c_mm\|\mathbf{1}_G\|_m$.
      Given that $G$ is arbitrary in $\cff_s$, a classical argument entails \eqref{ineq.JNweighted}.

      \smallskip

      \textit{Step 3.}
      Fix $\lambda>0$. For $(s,t)$ such that $2\lambda \Gamma(t-s)^\kappa\le e^{-3}$, we have by Taylor's expansion and \eqref{ineq.JNweighted} that
      \begin{align*}
        \left\|\E_s \exp\left({\lambda\sup_{r\in[s,t]}|V_r-V_s|}\right)\right\|_\infty
        &\le\sum_{m=0}^\infty \frac{\lambda^m}{m!}\left\|\E_s\left(\sup_{r\in[s,t]}|V_r-V_s|\right)^m\right\|_\infty
        \\&\le1+\sum_{m=1}^\infty \frac{\lambda^m}{m!}(c_mm)^m(2 \Gamma(t-s)^\kappa)^m\le M
      \end{align*}
      where $M=1+\sum_{m=1}^\infty a_m$ and $ a_m= \frac{(c_mm)^m }{m!}e^{-3m}$.
      Because $\lim_{m\to\infty}\frac{a_{m+1}}{a_m}=e^{-1}$, $M$ is finite by the ratio test.
      Since $V^*_t-V^*_s\le\sup_{r\in[s,t]}|V_r-V_s|$, the previous estimate also implies that $\|\E_s e^{\lambda({V^*_t-V^*_s})}\|_\infty\le M$ whenever $2\lambda \Gamma(t-s)^\kappa\le e^{-3}$, which is equivalent to \eqref{tmp.M1}.
    \end{proof}

\section{Symbolic Index} \label{app:symbolic_index}

\renewcommand{\arraystretch}{1.2}\setlength{\tabcolsep}{6pt}%
\centering
\begin{longtable}{l p{0.62\linewidth} l}
%
%
%
%
\noindent%
$V, W$ & finite-dimensional Banach spaces & --- \\ 
$\abx, {\mathcal Y}$ & generic (not necessarily separable) Banach space & --- \\
$\W$ & stochastic basis $(\Omega,\mathcal G, \P;\{\cff_t\})$ & Sec.~\ref{sec:ss} \\
$\E, \E_s $ & (conditional) expectation (given $\cff_s$) & Eq. \ref{not:E_s} \\ 
$\E_\bigcdot A$ & 
$(s,t;\omega) \mapsto \E_s (A_{s,t} )(\omega)$ 
& Eq. \ref{shorthand_bigcdot} \\ 
$\X = (X,\XX)$ & generic $\alpha$-Hölder rough path 
& Def. \ref{def.RP} \\
$\rho_\alpha, \rho_{\alpha,\alpha'}, \rho_{\alpha,\alpha'}$ & distance between (resp. size of) H\"older rough path(s) 
& Eq. \ref{def.rho_metric} \\ 
$\delta Y_{s,t}$ & increment of a path $Y$: $Y_t - Y_s$ & Eq. \ref{nota:increments} \\ 
$\delta A_{s,u,t}$ & 3-point increment: $A_{s,t} - A_{s,u} - A_{u,t}$ & Eq. \ref{nota:increments_2} \\ 
$| \cdot |$ & generic norm on Banach space, also length of interval & --- \\ 
$\| \xi \|_m$ & standard (quasi-)norm of r.v. $\xi \in L_m$ & Sec.~\ref{sec:ss} \\
$\| \xi | \cff \|_m$ & conditional moment norm: $\E (| \xi |^m | \cff)^{\frac{1}{m}}$ & Eq. \ref{Lmn_norm} \\
$\| \xi \|_{m,n}$ & mixed moment norm: $\|\| \xi |\mathcal{F} \|_m \|_n$
& Sec.~\ref{sec:mms} \\ 
$|Y|_\infty$ & supremum norm: $\sup_t |Y_t|$, for path & --- \\
$[Y]_\alpha, |Y|_\alpha$ & Hölder (semi)norm: $|\delta Y|_\alpha$, $|Y|_\infty + [Y]_\alpha$ respectively  & Eq. \ref{holder_1} \\ 
$|A|_\kappa$ & Hölder norm: $\sup_{s<t}\frac{|\|A_{s,t}|}{|t-s|^\kappa }$ for $2$-parameter maps & Eq. \ref{holder_2} \\
$\|A\|_{\infty;m,n}$ & $\sup_{s<t}\|\|A_{s,t}|\cff_s\|_m\|_n$ & Def.~\ref{def.C2mn} \\ 
$\|A\|_{\kappa;m,n}$ & $\sup_{s<t}\frac{\|\|A_{s,t}|\cff_s\|_m\|_n}{|t-s|^\kappa}$ & Def.~\ref{def.2amn} \\
$\|A\|_{\kappa;m}$ &
$\|A\|_{\kappa;m,m}$  & Eq. \ref{Aam} \\
$\bk{A(.)}_{\kappa,m,n}$ & $\| |A|_\infty \|_{\kappa;m,m}$, with $|A|_{\infty;s,t} = \sup_x |A_{s,t}(x)|$ 
& Eq.~\ref{def.bk} \\ 
$\|Y\|_{\infty;m}$ & $\sup_{t } \| Y_ t \|_m$, for process \\
$\|Y\|_{\alpha;m,n}$ & $\|Y\|_{\infty;m} + \|\delta Y\|_{\alpha;m,n}$ & Eq. \ref{Ykmn} \\ 
$\|Y\|_{\alpha;m}$ & 
$\|Y\|_{\alpha;m,m}$. 
& 
Eq. \ref{Yam}  \\
$\|Y\|_{0;m}$ & 
$\|Y\|_{\infty;m} + \|\delta Y\|_{0;m} \asymp \| Y \|_{\infty;m} $ & 
Eq. \ref{Yam}  \\
$[f]_\alpha, |f|_\alpha$ & H\"older (semi)norm: $\sup_{x\neq y}\frac{|f(x)-f(y)|}{|x-y|^\alpha}$, $|f|_\infty + [f]_\alpha$ resp. & Sec.~\ref{sec:fs} \\
$|f|_\gamma$ & for $\gamma = N+\alpha, \alpha 
\in (0,1]$: Lipschitz norm 
& Sec.~\ref{sec:fs} \\ 
$L_0(\abx)$ & (strongly) measurable $\abx$-valued random variables & Sec.~\ref{sec:ss} \\ 
$L_m(\abx)$ & moment space of $\abx$-valued random variables & Sec.~\ref{sec:ss} \\ 
$L_{m,n}(\abx)$ & mixed moment space of $\abx$-valued random variables & Eq.~\ref{Lmn_space} \\ 
$C(I;\abx)$ & continuous paths on interval $I$ & --- \\ 
$C_2^{\alpha}(I;\abx)$ & $\alpha$-Hölder type $2$-parameter functions & Sec. \ref{sec:rps} \\ 
$C^{\alpha}(I;\abx)$ & $\alpha$-Hölder paths $Y:I\to\abx$, seminorm $|\delta Y|_\alpha$ & Sec. \ref{sec:rps} \\ 
$\mathscr{C}^\alpha ( \mathscr{C}^{0,\alpha}_g) (I,V)$ & space of (geometric) $\alpha$-Hölder rough paths over $V$ & Def. \ref{def.RP} \\ 
$C_2L_{m,n}(I,\W;\abx)$ & space of $\abx$-valued two-parameter processes 
& Def. \ref{def.C2mn} \\ 
$C^{\kappa}L_{m,n}(I,\W;\abx)$ & processes $Y$ with $Y_t\in C(I;L_m(\abx))$, $\delta Y\in C_2^\kappa L_{m,n}$ & Def. \ref{def.2amn} \\
$C_2^{\kappa}L_{m,n}(I,\W;\abx)$ & $2$-parameter stochastic process space & Def. \ref{def.2amn} \\ 
$\C_b(V,W)$ & continuous bounded maps $f:V\to W$ 
& Sec. \ref{sec:fs} \\ 

$\C_b^\kappa(V,W)$ & Lipschitz space of functions $f:V\to W$ & Sec.~\ref{sec:fs} \vspace{0.3cm}  \\

$\D_X^{\beta,\beta'}L_{m,n}, \D_X^{2\beta}L_{m,n}$& stochastic ($X$-)controlled rough paths (s.c.r.p.) &
Def.~\ref{def.SCRP} \\
$(Z,Z')$ & generic (stochastic controlled) rough path & --- \\
$\bk{Z,Z'}_{X,\beta,\beta';m,n}$ & $	\|\delta Z\|_{\beta;m,n}  + \| \delta Z'\|_{\beta';m,n} + \|\E_{\bigcdot} R^Z\|_{\beta+\beta';n}$ &  Eq.~\ref{def.bracketDL} \\ 
$\|Z,Z'\|_{X,\beta,\beta';m,n}$ & $\bk{(Z,Z')}_{X,\beta,\beta';m,n}  + \| Z' \|_{\infty ; n} $, seminorm on $\D_X^{\beta,\beta'}L_{m,n}$ & Eq.~\ref{def.normDbar} \\ 
$\bk{-;-} _{X,\bar X;\beta,\beta';m,n}$
		&  \hspace{-0.5cm} $\|\delta Z- \delta \bar Z\|_{\beta;m,n}
	+\|\delta Z'- \delta \bar Z'\|_{\beta';m,n}+\|\E_\bigcdot R^{Z}-\E_\bigcdot \bar R^{\bar Z}\|_{\beta+\beta';n}$, & Eq.~\ref{def.scrp.bracket} \\

$\|-;-\|_{X,\beta,\beta';m,n}$ & \hspace{-0.5cm} $ \bk{Z,Z';\bar Z,\bar Z'}_{X,\bar X;\beta,\beta';m,n} +  \| Z' - \bar Z' \|_{\infty ; n}$, distance on $\sim$
& Eq.~\ref{def.scrp.metric} \vspace{0.3cm}  \\

$\D_X^{\beta,\beta'}L_{m,n}\C^\gamma_b$ & space of stochastic controlled vector fields & Def. \ref{def.scvec} \\
$(f,f')$ & generic stochastic controlled vector field (s.c.v.f.) &  --- \\
$[(f,f')]_{\gamma;n}$ & $\sup_{s}(\|[f_s]_\gamma\|_n+\||f'_s|_{\gamma-1}\|_n)$ & Eq. \ref{def.norms_scvf} \\ 
$\|(f,f')\|_{\gamma;n}$ &
$\sup_{s}(\| |f_s|_\gamma\|_n+\||f'_s|_{\gamma-1}\|_n)$ & Eq. \ref{def.norms_scvf} \\
$\bk{(f,f')}_{X;\beta,\beta';m,n}$ & $\bk{\delta f}_{\beta;m,n}+\bk{\delta Df}_{\beta';m,n}+\bk{\delta f'}_{\beta';m,n}+\bk{\E_\bigcdot R^f}_{\beta+\beta';n}$, 
seminorm on $\D_X^{\beta,\beta'}L_{m,n}\C^\gamma_b$  & Eq. \ref{def.norms_scvf} \\
\end{longtable}

\bibliography{processes_martingale}
\end{document}